
\documentclass[review]{elsarticle}
\journal{Advances in Mathematics}
\usepackage{fullpage}
\usepackage{amsfonts,amsmath,amscd,amssymb,amsthm}
\usepackage{dsfont,mathtools}
\usepackage{pgf,tikz,tikz-cd}
\usetikzlibrary{arrows,positioning}
\usepackage{xspace}
\usepackage{latexsym}
\usepackage[all]{xy}

\usepackage{pifont}
\newcommand{\ccr}{\text{\ding{64}}}

\newtheorem{theorem}{Theorem}[section]
\newtheorem{lemma}[theorem]{Lemma}
\newtheorem{prop}[theorem]{Proposition}
\newtheorem{con}{Conjecture}
\newtheorem{cor}[theorem]{Corollary}

\newcommand{\Mod}{{\mathrm M}{\mathrm o}{\mathrm d}}

\newcommand{\End}{{\mathrm E}{\mathrm n}{\mathrm d}}

\newcommand{\Rad}{{\mathrm R}{\mathrm a}{\mathrm d}}
\newcommand{\Lie}{{\mathrm L}{\mathrm i}{\mathrm e}}
\newcommand{\Cox}{{\mathrm C}{\mathrm o}{\mathrm x}{\mathrm e}{\mathrm t}{\mathrm e}{\mathrm r}}

\newcommand{\Aut}{{\mathrm A}{\mathrm u}{\mathrm t}}
\newcommand{\tAut}{2\mbox{-}\Aut}
\newcommand{\wAut}{\widetilde{\tAut}}
\newcommand{\sAut}{{\mathcal A}{ut}}
\newcommand{\tsAut}{2\mbox{-}\sAut}

\newcommand{\tVec}{2\mbox{-}{\mathrm V}{\mathrm e}{\mathrm c}{\mathrm t}}
\newcommand{\tVecc}{{\mathit 2}\mbox{-}{\mathcal V}{\mathit e}{\mathit c}{\mathit t}}
\newcommand{\tMd}{{\mathit 2}\mbox{-}{\mathcal M}{\mathit o}{\mathit d}^\bK}
\newcommand{\tV}{\tVec^\bK}
\newcommand{\tVc}{\tVecc^\bK}
\newcommand{\PGL}{{\mathrm P}{\mathrm G}{\mathrm L}}
\newcommand{\GL}{{\mathrm G}{\mathrm L}}

\newcommand{\Ind}{{\mathrm I}{\mathrm n}{\mathrm d}}
\newcommand{\Coi}{{\mathrm C}{\mathrm o}{\mathrm i}{\mathrm n}{\mathrm d}}

\newcommand{\Rep}{{\mathrm R}{\mathrm e}{\mathrm p}}

\newcommand{\tRep}{2\mbox{-}\Rep}

\newcommand{\coker}{{\mathrm c}{\mathrm o}{\mathrm k}{\mathrm e}{\mathrm r}}

\newcommand{\sA}{\mathcal{A}}
\newcommand{\sS}{\mathcal{S}}

\newcommand{\sD}{\mathcal{D}}
\newcommand{\sE}{\mathcal{E}}

\newcommand{\sC}{\mathcal{C}}
\newcommand{\sG}{\mathcal{G}}
\newcommand{\sI}{\mathcal{I}}
\newcommand{\sH}{\mathcal{H}}
\newcommand{\sK}{\mathcal{K}}

\newcommand{\sR}{\mathcal{R}}
\newcommand{\sX}{\mathcal{X}}
\newcommand{\mC}{\mathfrak{C}}

\newcommand{\mX}{\mathfrak{X}}

\newcommand{\bA}{{\mathbb A}}
\newcommand{\bB}{{\mathbb B}}
\newcommand{\bC}{{\mathbb C}}
\newcommand{\bK}{{\mathbb K}}

\newcommand{\bG}{{\mathbb G}}

\newcommand{\bN}{{\mathbb N}}

\newcommand{\bX}{{\mathbb X}}
\newcommand{\bZ}{{\mathbb Z}}

\newcommand{\bT}{{\mathbb T}}

\newcommand{\Tr}{\bT{\mathrm r}}

\newcommand{\BP}{{\mathbb B}^\Phi}
\newcommand{\BA}{{\mathbb B}_\bA}

\newcommand{\va}{\ensuremath{\mathbf{a}}\xspace}
\newcommand{\vb}{\ensuremath{\mathbf{b}}\xspace}

\newcommand{\vf}{\ensuremath{\mathbf{f}}\xspace}
\newcommand{\vg}{\ensuremath{\mathbf{g}}\xspace}
\newcommand{\vh}{\ensuremath{\mathbf{h}}\xspace}
\newcommand{\vi}{\ensuremath{\mathbf{i}}\xspace}

\newcommand{\vs}{\ensuremath{\mathbf{s}}\xspace}
\newcommand{\vt}{\ensuremath{\mathbf{t}}\xspace}

\newcommand{\vx}{\ensuremath{\mathbf{x}}\xspace}
\newcommand{\vy}{\ensuremath{\mathbf{y}}\xspace}

\newcommand{\fg}{{\mathfrak{g}}}

\newenvironment{mylist}{\begin{list}{}{
\setlength{\itemsep}{0mm}
\setlength{\parskip}{0mm}
\setlength{\topsep}{1mm}
\setlength{\parsep}{0mm}
\setlength{\itemsep}{0mm}
\setlength{\labelwidth}{6mm}
\setlength{\labelsep}{3mm}
\setlength{\itemindent}{0mm}
\setlength{\leftmargin}{9mm}
\setlength{\listparindent}{6mm}
}}{\end{list}}

\begin{document}

\begin{frontmatter}


\title{2-Groups, 2-Characters, and Burnside Rings\tnoteref{th}}
\author{Dmitriy Rumynin}
\ead{D.Rumynin@warwick.ac.uk}
\address{Department of Mathematics, University of Warwick, Coventry, CV4 7AL, UK\newline
\hspace*{0.31cm}  Associated member of Laboratory of Algebraic Geometry, National
Research University Higher School of Economics, Russia}
\author{Alex Wendland}
\ead{A.P.Wendland@warwick.ac.uk}
\address{Department of Mathematics, University of Warwick, Coventry, CV4 7AL, UK}
\tnotetext[th]{The research was partially supported by the Russian Academic Excellence Project `5--100' and by Leverhulme Foundation.
  We would like to thank Anna Pusk\'{a}s for help with latex packages.
  We would like to thank Derek Holt
  and Bruce Westbury for interesting discussions and valuable suggestions.
  We are indebted to Joao Faria Martins and the referee for pointing
  out the issues with earlier versions of the paper.}

\begin{keyword}
2-group, crossed module, Burnside ring
\MSC[2010] Primary 18D05 Secondary 19A22
\end{keyword}

\begin{abstract}
We study 2-representations, i.e., actions of
2-groups on 2-vector spaces.
Our main focus is character theory for 2-representations.
To this end we employ the technique of extended Burnside rings.
Our main theorem is that the Ganter-Kapranov 2-character
is a particular mark homomorphism of the Burnside ring.
As an application we give a new proof of Osorno's formula for
the Ganter-Kapranov 2-character of a finite group.
\end{abstract}

\end{frontmatter}

This paper
develops the theory of 2-representations 
and their associated 2-characters. 
A finite group $G$
can act on a 2-vector space, 
in a sense of
Kapranov and Voevodsky \cite{KV94}.
Ganter and Kapranov use 2-traces to associate a 2-character to such a
2-representation \cite{GK08}.
Osorno gives an explicit formula for this 2-character in terms of cohomological data 
\cite{AO10}.

We would like to contemplate this 2-character and its formula
in this paper.
We approach it via 
generalised Burnside rings, building on the work by
Gunnells, Rose and the first author \cite{GRR11}.
Our main result is an explicit expression
for the Ganter-Kapranov character
as a mark homomorphism (Theorem~\ref{Our_Char}).

We work in a slightly bigger generality as it has little extra cost
and could prove useful:
instead of a finite group $G$ we work with
a 2-group $\sG=\widetilde{\sK}$
arising from a crossed module $\sK$ whose fundamental group
$\pi_1 (\sG)$ is finite.
Let us explain how this paper
is organised. 

In Section~\ref{s1} we set out the terminology
of 2-categories, in particular, we define
2-groups, 2-vector spaces, 2-representations and 2-modules.
At this stage one can think of a 2-representation
as a ``semisimple'' 2-module. 
Most of this chapter is well known, yet it is
essential to establish our notation and our terminology.
One important result in this section 
is Theorem~\ref{2vec_eq},
an equivalence of two realizations 
of the 2-category of 2-vector spaces. It is probably known to the experts
but we could not find it in the literature.

In Section~\ref{s2} we roll out our philosophy: a 2-module
$\Theta_A$
of $\sG$
arises from an action of the crossed module $\sK$-action on an algebra $A$
(Proposition~\ref{ext_2}).
There is a subtlety: we need to distinguish
strict and weak 2-actions.
A weak 2-action
is a weak homomorphism of crossed modules from $\sK$ to
the crossed module of 2-automorphisms of $A$.
We finish the section with a Morita theorem for strict 2-modules
(Theorem~\ref{G_morita}):
the 2-modules  $\Theta_A$ and  $\Theta_B$ are equivalent
if and only if
$A$ and $B$ are $\sK$-Morita equivalent.

Section~\ref{s2a} is devoted to 2-representations.
If $\sK = (H\rightarrow G)$ then a 2-representation of $\sG$
yields a 2-cocycle for $G$ (Lemma~\ref{morita_dec_group}).
At this stage we uncover another subtlety:
the cocycle may or may not be realized on a finite dimensional
projective representation of $G$.
In the former case we call the cocycle {\em realizable}.
Realizable cocycles lead to split semisimple algebras, while
non-realizable cocycles lead to 
direct sums of full matrix algebras, some of whom
are infinite dimensional.
We call such algebras {\em semimatrix}.
A semimatrix algebra $A$ with an action of $\sK$
gives a 2-representation $\Theta_A^\circ$ of $\sG$,
equal to $\Theta_A$ if $A$ is semisimple.
We can manage not only
$\Theta_A$
but also 
$\Theta_A^\circ$
with some additional care
(cf. Corollary~\ref{morita_dec} and 
Corollary~\ref{morita_dec_1}). 
The upshot of this chapter is that weak non-unital
2-representations
come from strict $\sK$-algebras, in particular,
are equivalent to strict 2-representations.
This gives a convenient insight into a structure of
2-representations.
We finish the section
with a structure theorem for 2-representations
(Theorem~\ref{irr_rep}).

In Section~\ref{s3} 
we utilise generalised Burnside rings \cite{GRR11}
to describe the Grothendieck group of 2-representations
of $\sG$.
It is curious that the Burnside ring is slightly unusual:
two elements $\vx,\vy\in G$ define the same conjugation if $\vx\vy^{-1}$
is in the centre but may determine different pull-backs of 2-representations. 

In Section~\ref{s4}
we define 
the Ganter-Kapranov 2-character for 2-Groups. 
We express these 2-characters in terms of the generalised Burnside
rings,
more precisely, the Ganter-Kapranov  2-character is a certain mark homomorphism
(Theorem~\ref{Our_Char}).
This result is the main theorem of this paper.

Starting from Section~\ref{s5}, we work with a group, 
as a particular example of a 2-group.
In Section~\ref{s5} we make some technical preparations, namely
we write an explicit formula for Shapiro isomorphism
on the level of cocycles (Theorem~\ref{prop1}). 
It may be known to experts 
but we could not find it in the literature.
It gives a slightly stronger version of Shapiro isomorphism:
two complexes are not only quasiisomorphic but also homotopically
equivalent.

In Section~\ref{s6} we explicitly calculate 
the Ganter-Kapranov 2-character for the groups.
Our first result (Theorem~\ref{th2})
is a formula that follows immediately from
mark homomorphisms.
The second result is a known formula by Osorno
(Theorem~\ref{Osor_Char}).
We intentionally prove the known formula as well
to demonstrate the power of our method.
We finish the section with two conjectures
suggesting how to generalise the content of this section
to the 2-groups.

In the final Section~\ref{s7} we restate two famous
conjectures on the level of 2-representations:
Lusztig's Conjecture about base sets for the double cells
and the 
McKay Conjecture for the number of $p^\prime$-characters of a
finite group.

\begin{center}
\section{Introduction to 2-representations}
\label{s1}
\end{center}

Let us clarify what we mean by 2-representations
of 2-groups in this paper.
They have been studied by several authors
\cite{BBFW, BL04, BaMa, Elg, GK08, AO10}.
In general, we follow the terminology of
Benabou, who distinguishes 2-categories and bicategories
\cite{Ben}.
Both structures consist of a class $\sC_0$ of 0-objects,
a category $\sC_1 (x,y)$ for each pair of 0-objects,
a unit 1-object (or 1-morphism - these are synonyms) 
$\vi_x \in \sC_1 (x,x)$ 
and composition bifunctors
$$
\diamond = \diamondsuit_{x,y,z}:\sC_1 (x,y)\times \sC_1 (y,z)\rightarrow \sC_1
(x,z).
$$
In particular, $\sC_2 (u,v)$ is the set of 2-morphism between 
1-objects (a.k.a, 1-morphisms) $u$ and $v$ of $\sC_1 (x,y)$. 
We use two symbols for compositions.
The circle $\circ$ stands for the usual compositions of morphisms
or 2-morphism that we write right-to-left (or bottom-to-top in
globular notation).
The diamond $\diamond$ stands for the composition bifunctor in a bicategory
that we write left-to-right.
In a 2-category the compositions of 1-morphisms
are associative and unital.
Let $\sI_{x,y}$ be the identity endofunctor on $\sC_1 (x,y)$.
In a bicategory associativity is a family of natural isomorphisms of
trifunctors
$$
\mbox{Ass}_{w,x,y,z}: 
\diamondsuit_{w,x,z}\circ(\sI_{w,x}\times \diamondsuit_{x,y,z})
\Rightarrow
\diamondsuit_{w,y,z}\circ(\diamondsuit_{w,x,y}\times\sI_{y,z})
,$$
$$ 
\diamondsuit_{w,x,z}\circ(\sI_{w,x}  \times \diamondsuit_{x,y,z})
, \ 
\diamondsuit_{w,y,z}\circ(\diamondsuit_{w,x,y} \times \sI_{y,z}) 
:
\sC_1 (w,x)\times \sC_1 (x,y)\times \sC_1
(y,z)\rightarrow \sC_1 (w,z)
$$
such that 
the pentagon diagrams are commutative. 
Similarly, 
unitality in a bicategory is two families of natural isomorphisms of
functors
$$
\mbox{RUn}_{x,y}: \diamondsuit_{x,y,y} (\ \ , \vi_y) \Rightarrow
\sI_{x,y}, \ \ 
\mbox{LUn}_{x,y}: \diamondsuit_{x,x,y} (\vi_x , \ \ ) \Rightarrow
\sI_{x,y}
$$
such that 
the triangle diagrams are commutative. 
A bicategory is {\em small} if 
it consists of sets on each level: $\sC_0$ is a set
and all categories 
$\sC_1 (x,y)$ are small.
We will have both 2-categories and bicategories in this paper.

{\em A (weak) 2-functor} (between bicategories)
$F : \sC \rightarrow \sD$
is a datum
$$
F^0 : \sC_0 \rightarrow \sD_0, \ \ 
F^1_{x,y} : \sC_1 (x,y) \rightarrow \sD_1 (F^0(x),F^0(y)), \ \ 
F^2_{x} :\vi_{F^0(x)} \Rightarrow F^1_{x,x} (\vi_x), 
$$
$$
F^2_{x,y,z} : 
\diamondsuit_{F^0(x),F^0(y),F^0(z)} \circ (F^1_{x,y}\times F^1_{y,z} )
\Rightarrow 
F^1_{x,z}\circ \diamondsuit_{x,y,z}
$$
where $F^0$ is a function, 
$F^1_{x,y}$ is a family of functors,
$F^2_{x}$ is a family of 2-isomorphisms (i.e., 2-morphisms
that are isomorphisms),
and $F^2_{x,y,z}$ is a family
of compatibility conditions, 
natural isomorphisms of 
bifunctors from $\sC_1 (x,y) \times \sC_1 (y,z)$
to 
$\sD_1 (F^0(x),F^0(z))$.
The requirement is that the hexagon and the square diagrams are
commutative (cf. a definition of monoidal functor).
The hexagon diagram ensures that the two possible natural
transformations 
$$
nt_1, nt_2: \diamondsuit_{F^0(w),F^0(y),F^0(z)} \circ
(\diamondsuit_{F^0(w),F^0(x),F^0(y)} \times \mbox{Id}) \circ
(F^1_{w,x}\times F^1_{x,y}\times F^1_{y,z} )
\Rightarrow 
F^1_{w,z}\circ \diamondsuit_{w,x,z} \circ (\mbox{Id} \times \diamondsuit_{x,y,z}) 
$$
of trifunctors
from $\sC_1 (w,x)\times \sC_1 (x,y) \times \sC_1 (y,z)$
to
$\sD_1 (F^0(w),F^0(z))$ 
are equal.
Similarly, the two square diagrams equate possible natural
transformations 
$$
\diamondsuit_{F^0(x),F^0(y),F^0(y)} \circ
(F^1_{x,y}\times \vi_{F^0(y)} )
\Rightarrow
F^1_{x,y} \circ 
\diamondsuit_{x,y,y} \circ
(\mbox{Id} \times \vi_{y} ), \ \
\diamondsuit_{F^0(x),F^0(x),F^0(y)} \circ
(\vi_{F^0(x)} \times F^1_{x,y})
\Rightarrow
F^1_{x,y} \circ 
\diamondsuit_{x,x,y} \circ
(\vi_{x}\times \mbox{Id})
$$
of functors
from $\sC_1 (x,y)$
to
$\sD_1 (F^0(x),F^0(y))$.  
We say that $F$ is {\em unital} if each $F^2_{x}$ is the identity.
We say that $F$ is {\em strict} if all $F^2_{x}$ and
$F^2_{x,y,z}$ are the identities.

{\em A
natural 2-transformation} 
$\psi : F \Rightarrow G$
between 2-functors
$F,G : \sC \rightarrow \sD$
is a datum
$$
\psi^1_{x} \in \sD_1 (F^0(x),G^0(x)), \ \ 
\psi^2_{x,y}:  
F^1_{x,y}  \diamond \psi^1_{y} 
\Rightarrow  
\psi^1_{x} \diamond G^1_{x,y}
$$
where $\psi^1$ is a family of 1-morphisms in $\sD$
and $\psi^2$ is a family of
natural transformations of functors \linebreak 
$\sC_1 (x,y) \rightarrow \sD_1 (F^0(x),G^0(y))$
satisfying two coherence conditions
that we will spell out following Barrett and Mackaay \cite{BaMa}.
The first condition is that 
$$
\psi_{\va,\vb}=\psi_{\va}\circ\psi_{\vb}
$$
where these three 2-morphisms are defined
for any pair of 1-objects 
$\va\in \sC_1 (x,y)$, $\vb\in \sC_1 (y,z)$
by
$$
\psi_{\va,\vb}:
F^1_{x,y}(\va) \diamond F^1_{y,z}(\vb)  \diamond \psi^1_{z}
\xrightarrow{F^2_{x,y,z}(\va,\vb)}
F^1_{x,z}(\va\diamond\vb) \diamond \psi^1_{z}  
\xrightarrow{\psi^2_{x,z}(\va\diamond\vb)}
\psi^1_{x}\diamond G^1_{x,z}(\va\diamond\vb) 
\xrightarrow{G^2_{x,y,z}(\va,\vb)^{-1}}
\psi^1_{x}\diamond G^1_{x,y}(\va)\diamond G^1_{y,z}(\vb),
$$
$$
\psi_{\vb}:
F^1_{x,y}(\va) \diamond F^1_{y,z}(\vb)  \diamond \psi^1_{z}
\xrightarrow{\mbox{Id}_{F^1_{x,y}(\va)}\diamond\psi^2_{y,z}(\vb)}
F^1_{x,y}(\va) \diamond \psi^1_{y}\diamond G^1_{y,z}(\vb),
$$
$$
\psi_{\va}:
F^1_{x,y}(\va) \diamond \psi^1_{y}\diamond G^1_{y,z}(\vb)
\xrightarrow{\psi^2_{x,y}(\va)\diamond\mbox{Id}_{G^1_{y,z}(\vb)}}
\psi^1_{x}\diamond G^1_{x,y}(\va)\diamond G^1_{y,z}(\vb).
$$
The second condition is that the 2-morphism
$$
\psi^1_{x}
\xrightarrow{LUn^{-1}}
\vi_{F^0(x)}
\diamond 
\psi^1_{x} 
\xrightarrow{F^{1\; -1}_{x}}
F^1_{x,x}  (\vi_x)
\diamond 
\psi^1_{x} 
\xrightarrow{\psi^2_{x,x}(\vi_x)}
\psi^1_{x}
\diamond
G^1_{x,x} (\vi_x)
\xrightarrow{G^{1}_{x}}
\psi^1_{x}
\diamond
\vi_{G^0(x)}
\xrightarrow{RUn}
\psi^1_{x}
$$
is equal to $\mbox{Id}_{\psi^1_{x}}$ 
for each $x\in \sC_0$. 
If all $\psi^1_{x}$ 
are 1-equivalences 
and all
$\psi^2_{x,y}$ 
are natural isomorphisms,
we say that $\psi$ is
{\em a natural 2-isomorphism}.
Recall that
a 1-morphism $\va\in \sC_1(x,y)$
is a 1-equivalence if it is quasiinvertible,
i.e., there exists $\va^{-1}\in \sC_1(y,x)$ such that 
$\va\diamond\va^{-1}$ is isomorphic to $\vi_x$
and 
$\va^{-1}\diamond\va$ is isomorphic to $\vi_y$.
It is a 1-isomorphism if it is invertible, i.e., 
there exists $\va^{-1}\in \sC_1(y,x)$ such that 
$\va\diamond\va^{-1}=\vi_x$
and 
$\va^{-1}\diamond\va=\vi_y$.

Two bicategories $\sC$ and $\sD$ are {\em equivalent}
if there exist {\em a 2-equivalence}, 
i.e., a 2-functor $F: \sC\rightarrow\sD$
that admits a quasiinverse  2-functor $G: \sD\rightarrow\sC$
with natural 2-isomorphisms
$FG \xrightarrow{\cong} \sI d_\sD$ and
$GF \xrightarrow{\cong} \sI d_\sC$.

{\em A 2-group} $\sG$
is a bicategory 
such that  $\sG_0$
is a one-element set, 
each 2-morphism is a 2-isomorphism, and
each 1-morphism is a 1-equivalence. 
{\em A strict 2-group} $\sG$
is a 2-category 
such that  $\sG_0$
is a one-element set,
each 2-morphism is a 2-isomorphism, and
each 1-morphism is a 1-isomorphism.

{\em A homomorphism} of 2-groups
is a 2-functor $F: \sG \rightarrow \sH$.
It could be strict or unital, if such is the 2-functor.
The 2-groups $\sG$ and $\sH$ are {\em equivalent} 
if there is a 2-equivalence $F: \sG \rightarrow \sH$.

Strict small 2-groups are equivalent to 2-groups
that arise from crossed modules \cite{BBFW}. 
Let $\sK = (H\xrightarrow{\partial}G)$ be a crossed module. 
We assume that $G$ acts on $H$ on the left:
$$
\vg : h\mapsto \,^\vg h. 
$$
Its
fundamental groups are $\pi_2 (\sK) = \ker (\partial)$ and
$\pi_1 (\sK) = \coker (\partial)$. 
The crossed module $\sK$ determines a small strict 2-group $\widetilde{\sK}$:
\begin{mylist}
\item{1.}
$\widetilde{\sK}_0$ is a one point set
$\{\star\}$.
\item{2.}
The objects of the category $\widetilde{\sK}_1 (\star, \star)$ is the group $G$.
\item{3.}
For a pair $\vg_1,\vg_2 \in G$ the 2-morphisms 
in the category $\widetilde{\sK}_1 (\star, \star)$
are 
$$\widetilde{\sK}_2 (\vg_1,\vg_2) :=
\{
\vg_1\stackrel{h}{\Longrightarrow}\vg_2
\} \; = \; \{
\xymatrix{
\star
   \ar@/^2pc/[rr]_{\quad}^{\vg_2}="2"
   \ar@/_2pc/[rr]_{\vg_1}="1"
&& \star
\ar@{}"1";"2"|(.2){\,}="7"
\ar@{}"1";"2"|(.8){\,}="8"
\ar@{=>}"7" ;"8"^{h}
} 
\,\mid\
h\in H,\ \vg_2 = \partial (h) \vg_1 \}
,$$
while their composition is determined by the product in $H$:
$$
[\vg_2\stackrel{h_2}{\Longrightarrow}\vg_3]
  \circ
      [\vg_1\stackrel{h_1}{\Longrightarrow}\vg_2]
      =
\xymatrix{
\star
   \ar@/^2pc/[rr]_{\quad}^{\vg_3}="3"
    \ar[rr]|{\vg_2}="2"
   \ar@/_2pc/[rr]_{\vg_1}="1"
&& \star
\ar@{}"1";"2"|(.2){\,}="7"
\ar@{}"1";"2"|(.8){\,}="8"
\ar@{=>}"7" ;"8"^{h_1}
\ar@{}"2";"3"|(.2){\,}="9"
\ar@{}"2";"3"|(.8){\,}="10"
\ar@{=>}"9" ;"10"^{h_2}
} 
=
\xymatrix{
\star
   \ar@/^2pc/[rr]_{\quad}^{\vg_3}="2"
   \ar@/_2pc/[rr]_{\vg_1}="1"
&& \star
\ar@{}"1";"2"|(.2){\,}="7"
\ar@{}"1";"2"|(.8){\,}="8"
\ar@{=>}"7" ;"8"^{h_2h_1}
} .
$$
\item{4.}
The composition bifunctor comes from the action and the multiplication:
$$
[\vf\stackrel{h}{\Longrightarrow}\vg]
  \diamond
      [\vf_1\stackrel{h_1}{\Longrightarrow}\vg_1]
      =
\xymatrix{
\star
   \ar@/^2pc/[rr]_{\quad}^{\vg}="1"
   \ar@/_2pc/[rr]_{\vf}="2"
&& \star
   \ar@/^2pc/[rr]_{\quad}^{\vg_1}="3"
   \ar@/_2pc/[rr]_{\vf_1}="4"
\ar@{}"1";"2"|(.2){\,}="7"
\ar@{}"1";"2"|(.8){\,}="8"
\ar@{=>}"8" ;"7"^{h}
&& \star
\ar@{}"4";"3"|(.2){\,}="7"
\ar@{}"4";"3"|(.8){\,}="6"
\ar@{=>}"7" ;"6"^{h_1}
} 
=
\xymatrix{
\star
   \ar@/^2pc/[rr]_{\quad}^{\vg\,\vg_1}="1"
   \ar@/_2pc/[rr]_{\vf\,\vf_1}="2"
&& \star
\ar@{}"1";"2"|(.2){\,}="7"
\ar@{}"1";"2"|(.8){\,}="8"
\ar@{=>}"8" ;"7"^{h\,^\vf h_1}
} . 
$$
\item{5.}
$\vi_{\star} = 1_G$.
\end{mylist}
Let us check that the composition works:
$\partial(h \,^\vf h_1)\vf\vf_1=
\partial(h)\partial(\,^\vf h_1)\vf\vf_1=
\partial(h)\vf\partial(h_1)\vf^{-1}\vf\vf_1= \vg\vg_1
$.

We need the bicategory of 
Kapranov-Voevodsky finite-dimensional 2-vector spaces
over a field $\bK$ \cite{KV94}.
Let us describe 
$\tV$, a version of 2-vector spaces we find particularly useful:
\begin{mylist}
\item{1.}
The 0-objects are the natural numbers: 
$\tV_0 = \bN$
(we agree that $0\in \bN$).
\item{2.}
The categories $\tV_1(n,0)$ and $\tV_1(0,m)$ are 
the trivial categories with one object.
\item{3.}
For any two positive numbers $n$, $m$ 
the objects of the
category $\tV_1(n,m)$ are $n \times m$-matrices $(V_{i,j})$
of finite-dimensional
$\bK$-vector spaces. 
\item{4.} The 2-morphisms 
in the category $\tV_1(n,m)$ are
$$
\tV_2 ((V_{i,j}),(W_{i,j}))
=\{
\xymatrix{
n
   \ar@/^2pc/[rr]_{\quad}^{W_{i,j}}="2"
   \ar@/_2pc/[rr]_{V_{i,j}}="1"
&& m
\ar@{}"1";"2"|(.2){\,}="7"
\ar@{}"1";"2"|(.8){\,}="8"
\ar@{=>}"7" ;"8"^{\varphi_{i,j}}
} 
\,\mid\,
\varphi_{i,j} : V_{i,j} \rightarrow W_{i,j}
\mbox{ is a linear map}
\}
$$
are $n \times m$-matrices $(\varphi_{i,j})$
of linear maps
$\varphi_{i,j} : V_{i,j} \rightarrow W_{i,j}$.
The composition of 2-morphisms 
is the composition of these linear maps:
$$
(\varphi_{i,j})
\circ
(\psi_{i,j})
=
\xymatrixcolsep{2pc}
\xymatrix{
n
   \ar@/^3pc/[rr]_{\quad}^{W_{i,j}}="1"
   \ar@/_3pc/[rr]_{U_{i,j}}="3"
   \ar[rr]|{V_{i,j}}="2"
&& m
\ar@{}"2";"1"|(.2){\,}="7"
\ar@{}"2";"1"|(.75){\,}="8"
\ar@{=>}"7" ;"8"^{\varphi_{i,j}}
\ar@{}"3";"2"|(.25){\,}="9"
\ar@{}"3";"2"|(.8){\,}="10"
\ar@{=>}"9" ;"10"^{\psi_{i,j}}
} 
=
\xymatrixcolsep{3pc}
\xymatrix{
n
   \ar@/^2pc/[rr]_{\quad}^{W_{i,j}}="1"
   \ar@/_2pc/[rr]_{U_{i,j}}="2"
&& m
\ar@{}"2";"1"|(.2){\,}="7"
\ar@{}"2";"1"|(.8){\,}="8"
\ar@{=>}"7" ;"8"^{\varphi_{i,j}\psi_{i,j}}
} .
$$
\item{5.}
The composition bifunctor
$\diamondsuit_{n,p,m} : \tV_1(n,p) \times \tV_1(p,m) \rightarrow \tV_1(n,m)$
is
$$
(\varphi_{i,j})
\diamond
(\psi_{i,j})
=
\xymatrix{
n
   \ar@/^2pc/[rr]^{W_{i,k}}="2"
   \ar@/_2pc/[rr]_{\quad}_{V_{i,k}}="1"
&& p
   \ar@/^2pc/[rr]_{\quad}^{A_{k,j}}="3"
   \ar@/_2pc/[rr]_{B_{k,j}}="4"
\ar@{}"1";"2"|(.2){\,}="5"
\ar@{}"1";"2"|(.8){\,}="6"
\ar@{=>}"5" ;"6"^{\varphi_{i,k}}
&& m
\ar@{}"4";"3"|(.2){\,}="7"
\ar@{}"4";"3"|(.8){\,}="8"
\ar@{=>}"7" ;"8"^{\psi_{k,j}}
} 
=
\xymatrixcolsep{4pc}
\xymatrix{
n
   \ar@/_2pc/[rr]_{\oplus_k V_{i,k} \otimes A_{k,j} }="1"
   \ar@/^2pc/[rr]_{\quad}^{\oplus_k W_{i,k} \otimes B_{k,j} }="2"
&& m
\ar@{}"1";"2"|(.2){\,}="7"
\ar@{}"1";"2"|(.8){\,}="8"
\ar@{=>}"7" ;"8"^{\oplus_k \varphi_{i,k} \otimes \psi_{k,j}}
} .
$$
\item{6.}
The associativity constraint is non-trivial: it arises from
the associativity of tensor products of vector spaces.
\item{7.}
$\vi_{n} = (V_{i,j})$ where each $V_{i,i}$ is the field $\bK$
and $V_{i,j}=0$ if $i\neq j$.
\item{8.}
The unitality constraints are non-trivial: they arise from
the isomorphisms $\bK\otimes V \cong V \cong V \otimes \bK$.
%
\end{mylist}

There are other versions of this bicategory in the literature.
There is a ``more skeletal'' version where one uses standard
vector spaces $\bK^n$ instead of all finite dimensional vector spaces
as the matrix entries. We do not find this version very useful
but a ``bigger'' version $\tVc$ is convenient.
In particular, $\tVc$ is a 2-category.
Let $\sA^n$ be the category of finite dimensional representations
of the semisimple commutative algebra $\bK^n$. It is a semisimple
$\bK$-linear abelian category. The category $\sA^1$ is the category of
finite dimensional vector spaces. 
It has a monoidal structure via the
tensor product of vector spaces.
The category $\sA^n$ is an $\sA^1$-module category:
the theory of module categories 
is developed by
Ostrik \cite{Ost}. 
Without repeating it here
we spell out a few useful facts.
The main feature of a module category $\sC$ is 
an action bifunctor 
$$
\boxtimes : \sC \times \sA^1 \rightarrow\sC.
$$
Let $\sC=A\mbox{-}\Mod$ for any associative $\bK$-algebra $A$,
not necessarily $\bK^n$. 
The action bifunctor comes from the tensor product:
if $M$ is an $A$-module, $V$ is a vector space,
then $M\boxtimes V$ is an $A$-module,
defined as a vector space $M\otimes_\bK V$ with the $A$-action
on the first factor.

A module functor from $\sC_1$ to $\sC_2$ 
is a functor $F:\sC_1 \rightarrow\sC_2$
together with action intertwiners, certain functorial morphisms (i.e., given by a natural
transformation of bifunctors)
$$F_{M,X} : F(M \boxtimes X) \longrightarrow F(M) \boxtimes X \ 
\mbox{ for all } \  
X \in \sA^1 ,\; M \in \sC,
$$
satisfying the standard natural conditions (pentagon and triangle
commutativity) \cite[Def 2.7]{Ost}.
A module functor $F$ is {\em strong} if all $F_{M,X}$ are isomorphisms.
A module functor $F$ is {\em strict} if all $F_{M,X}$ are equalities.

A module natural transformation $\psi: F \Rightarrow G$ 
between two module functors $F,G:\sC_1 \rightarrow\sC_2$
is a natural transformation $\psi$ such that the
following diagram
is commutative for all $M\in\sC_1$, $X\in \sA^1$:
$$
\begin{CD}
F (M\boxtimes X) @>{F_{M,X}}>>     F(M)\boxtimes X \\
@V{{\psi}_{M\boxtimes X}}VV              @VV{\psi_M \boxtimes \,\mbox{Id}_X}V \\
G(M\boxtimes X)  @>{G_{M,X}}>> G(M)\boxtimes X.
\end{CD}
$$

Let us give some examples of module functors and module natural transformations.
If $\,_AP_B$ is a
bimodule, the corresponding functor 
$F: B\mbox{-}\Mod\rightarrow A\mbox{-}\Mod$,
$F(M)\coloneqq P\otimes_B M$ admits a canonical strong module functor
structure:
$$F_{M,X}: F(M\boxtimes X) = P\otimes_B (M \otimes_\bK V)
\longrightarrow
(P\otimes_B M) \otimes_\bK V
=
F(M)\boxtimes X
$$
is the standard associativity.
A homomorphism of bimodules
$\psi:\,_AP_B\rightarrow \,_AQ_B$
yields a module natural transformation
$$
\psi : P\otimes_B \; \Rightarrow Q \otimes_B \ , \ \ 
\psi_M (p\otimes_B m) =
\psi (p) \otimes_B m
\, .
$$
Any additive functor (for instance, a Morita equivalence)
$F: B\mbox{-}\Mod\rightarrow A\mbox{-}\Mod$
admits a non-canonical strong module functor structure.
One chooses a basis in each vector space $X$ that
gives isomorphisms $M\boxtimes X = M \otimes_\bK X \cong \oplus M$
with a direct sum of copies of the module indexed by the basis.
This leads to a strong module structure on $F$:
$$F_{M,X}: F(M\boxtimes X) 
\xrightarrow{\cong}
F(\oplus M)
\xrightarrow{\cong}
\oplus F(M)
\xrightarrow{\cong}
F(M)\boxtimes X.
$$

We are looking at the class of $\sA^1$-module categories
module-equivalent to $\sA^n$. 
A module-equivalence admits
a quasiinverse equivalence
that is a strong module functor.
This includes 
the zero category 
$\sA^0$ that has only zero objects.

\begin{lemma}
\label{inn_hom}
Let $\sC$ be an $\sA^1$-module category module-equivalent to $\sA^n$.
Let $L_1, \ldots L_n$ be non-isomorphic simple objects in $\sC$.
Then for any object $M$ of $\sC$ there is a functorial
isomorphism
\begin{equation}
\tag{$\clubsuit$}
M \cong 
\oplus_{i=1}^n 
L_i \boxtimes \sC (L_i, M) . 
\end{equation}
\end{lemma}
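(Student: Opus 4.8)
The plan is to exhibit a single canonical morphism realising $(\clubsuit)$ and to verify it is an isomorphism by testing it on simple objects. First I would record the structural consequences of the hypothesis. Since a module-equivalence is in particular a $\bK$-linear equivalence of categories, and $\sA^n = \bK^n\mbox{-}\Mod$ is semisimple with exactly $n$ pairwise non-isomorphic simple objects, each having endomorphism algebra $\bK$, the category $\sC$ inherits all of these properties. Hence $L_1,\ldots,L_n$ are a complete set of representatives of the isomorphism classes of simple objects, every object of $\sC$ is a finite direct sum of copies of the $L_i$, and Schur's lemma together with the splitness inherited from $\sA^n$ gives $\sC(L_i,L_j)=0$ for $i\neq j$ and $\sC(L_i,L_i)\cong \bK$.

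Next I would construct the comparison morphism $\varepsilon_M\colon \oplus_{i=1}^n L_i\boxtimes \sC(L_i,M)\to M$, natural in $M$. Because the action bifunctor $\boxtimes$ is additive in its $\sA^1$-variable, a choice of basis $\phi_1,\ldots,\phi_m$ of $\sC(L_i,M)$ yields an identification $L_i\boxtimes\sC(L_i,M)\cong\oplus_{k} L_i\boxtimes\bK\cong\oplus_k L_i$, and I define $\varepsilon_M$ on the $k$-th copy to be the morphism $\phi_k\colon L_i\to M$. Invariantly, $\varepsilon_M$ is characterised by the requirement that, under the unit constraint $L_i\cong L_i\boxtimes\bK$, the composite $\varepsilon_M\circ(\mathrm{Id}_{L_i}\boxtimes\phi)$ equal $\phi$ for every $\phi\in\sC(L_i,M)$, where $\bK\to\sC(L_i,M)$ sends $1\mapsto\phi$; this description makes both basis-independence and naturality in $M$ automatic, using only the coherence of the module-category structure.

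Finally I would prove that $\varepsilon_M$ is an isomorphism. Both the source and target of $\varepsilon$ are additive functors of $M$ and $\varepsilon$ is a natural transformation between them, so it suffices to verify the claim when $M=L_j$ is simple. In that case the Schur computation collapses the left-hand side to $L_j\boxtimes\sC(L_j,L_j)\cong L_j\boxtimes\bK\cong L_j$, and $\varepsilon_{L_j}$ is precisely evaluation at $\mathrm{Id}_{L_j}$, hence the identity. Since every object of $\sC$ is a finite direct sum of the $L_j$ and $\varepsilon$ respects direct sums, $\varepsilon_M$ is an isomorphism for all $M$, which is exactly $(\clubsuit)$.

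The step I expect to be the main obstacle is the honest construction of $\varepsilon_M$ in the second paragraph: one must check that the evaluation is well defined independently of the chosen basis and genuinely natural in $M$, and this rests entirely on the pentagon and triangle coherence of the $\sA^1$-action rather than on any computation. An alternative that avoids this point is to transport the manifestly functorial decomposition of $\sA^n=\bK^n\mbox{-}\Mod$, where $S_i\boxtimes\sA^n(S_i,N)$ is literally the $i$-th isotypic component of $N$, through a strong module quasi-inverse equivalence $G\colon\sC\to\sA^n$; there the difficulty instead becomes verifying that the module-functor isomorphisms $F(X\boxtimes V)\cong F(X)\boxtimes V$ are compatible with the identifications $\sC(L_i,M)\cong\sA^n(S_i,G(M))$ furnished by the equivalence.
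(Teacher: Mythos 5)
Your proof is correct, but your primary argument takes a genuinely different route from the paper's. The paper argues by transport: with $\psi:\sC\rightarrow\sA^n$ a module equivalence and $\varphi$ its quasiinverse, it invokes the concrete isotypic decomposition $\psi(M)\cong\oplus_i\,\psi(L_i)\boxtimes\sA^n(\psi(L_i),\psi(M))$ in $\sA^n$ (where it is literally given by evaluation), replaces $\sA^n(\psi(L_i),\psi(M))$ by $\sC(L_i,M)$ via full faithfulness, and then applies $\varphi$, using the strong module structure maps $\varphi_{M,X}$ to commute $\varphi$ past $\boxtimes$ and the quasiinverse data $\varphi\psi\cong \mathrm{Id}_\sC$ to land on $M$ --- this is precisely the alternative you sketch in your closing paragraph, including the compatibility issue you correctly identify there. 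You instead work intrinsically in $\sC$: the module equivalence is used only to import structural facts (semisimplicity, $\sC(L_i,L_j)\cong\delta_{ij}\,\bK$, finite-dimensional homs, and completeness of the list $L_1,\ldots,L_n$), after which you build the evaluation morphism $\varepsilon_M$ canonically and verify it is an isomorphism on simples. Your construction is in effect Ostrik's internal-hom evaluation specialised to the semisimple case; note the paper remarks immediately after the lemma that it ``replaces Ostrik's trick with internal hom-s,'' so it deliberately sidesteps the direct construction you carry out. What each buys: your argument is self-contained, produces a canonical isomorphism independent of the choice of $\psi$, and isolates exactly what coherence is needed (in fact only the unit constraint and bifunctoriality of $\boxtimes$, together with additivity of $\boxtimes$ in the $\sA^1$-variable --- a point you assert without proof, though it is built into the paper's framework and can itself be transported from $\sA^n$ via $\varphi_{M,X}$); the paper's transport argument is shorter and defers all coherence checking to the already-established module-functor axioms, at the cost of an a priori dependence of the resulting isomorphism on the chosen equivalence.
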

\begin{proof}
Let $\psi : \sC \rightarrow \sA^n$ be a module equivalence,
$\varphi$ its quasiinverse.
Functorial isomorphism means that the identity functor and the functor
in the right hand side of ($\clubsuit$) are naturally isomorphic. In 
$\sA^n$ we have
$$
\psi (M) \cong 
\oplus_{i=1}^n 
\psi (L_i) \boxtimes \sA^n (\psi(L_i), \psi (M))  
\cong 
\oplus_{i=1}^n 
\psi (L_i) \boxtimes \sC (L_i, M). 
$$
The first isomorphism is given by the evaluation map.
Now we apply $\varphi$, $\varphi_{M,X}$, and the quasiinverse data:
$$
M \cong 
\varphi (\psi (M)) \cong 
\oplus_{i=1}^n 
\varphi (\psi (L_i)) \boxtimes\sC (L_i, M)
\cong 
\oplus_{i=1}^n 
L_i\boxtimes\sC (L_i, M).  
$$
\end{proof}

Lemma~\ref{inn_hom}
replaces Ostrik's trick with internal hom-s \cite{Ost}.
Let us describe
$\tVc$ now. It is  a subcategory of the 2-category of
2-modules $\tMd$.

\begin{mylist}
\item{1.}
The 0-objects $\tMd_0$
are $\sA^1$-module categories module-equivalent to $A\mbox{-}\Mod$ 
for some associative $\bK$-algebra $A$.
\item{2.}
The objects of the
category $\tMd_1(\sC,\sD)$ are 
$\sA^1$-module
functors 
$\sC\rightarrow\sD$.
\item{3.} The 2-morphisms 
in the category  $\tMd_1(\sC,\sD)$
are $\sA^1$-module
natural transformations $F\Rightarrow G$.
\item{4.}
The composition bifunctor
$\tMd_1(\sC,\sD) \times \tMd_1(\sD,\sE) \rightarrow \tMd_1(\sC,\sE)$
is the composition of functors: $F\diamond G := G\circ F =GF$.
The action intertwiners are compositions too:
$$
(F\diamond G)_{M,X}=G_{F(M),X}\circ G(F_{M,X}):
GF(M\boxtimes X) \rightarrow
G(F(M)\boxtimes X) \rightarrow
GF(M) \boxtimes X. 
$$
\item{5.}
The associativity constraint is trivial. For instance, on the level of
functorial morphisms
$$
(FG)H_{M,X}=F_{GH(M),X}\circ F(G_{H(M),X})\circ FG(H_{M,X})
= F(GH)_{M,X}.
$$
\end{mylist}

In particular, $\tMd$ is a 2-category.
The 2-category of 2-vector spaces $\tVc$ is a full 2-subcategory
of $\tMd$ whose 0-objects $\tVc_0$
are those $\sA^1$-module categories module-equivalent 
to $\sA^n$ for some $n\in\bN$. Now we are ready for the main theorem
of this section.

\begin{theorem}
\label{2vec_eq}
The bicategories $\tV$ and $\tVc$ are equivalent.
\end{theorem}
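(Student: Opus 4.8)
The plan is to build an explicit 2-functor $F:\tV\to\tVc$ and then invoke the bicategorical Whitehead theorem: $F$ is a 2-equivalence as soon as it is biessentially surjective on 0-objects and the induced functor $F^1_{n,m}:\tV_1(n,m)\to\tVc_1(F^0 n,F^0 m)$ is an equivalence of categories for every pair $n,m$. On 0-objects I set $F^0(n)=\sA^n$, which is biessentially surjective by the very definition of $\tVc_0$. Fix in each $\sA^n$ the simple objects $L^n_1,\dots,L^n_n$ (the standard $\bK^n$-modules). To a matrix $(V_{i,j})\in\tV_1(n,m)$ I associate the functor
\[
F^1(V):\sA^n\to\sA^m,\qquad
F^1(V)(M)=\bigoplus_{i,j}L^m_j\boxtimes\bigl(V_{i,j}\otimes\sA^n(L^n_i,M)\bigr),
\]
which carries a canonical strong $\sA^1$-module structure since it is assembled from the module functors $\sA^n(L^n_i,-)$, the tensorings $V_{i,j}\otimes-$, the coactions $L^m_j\boxtimes-$, and finite direct sums. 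On 2-morphisms, a matrix $(\varphi_{i,j}):(V_{i,j})\Rightarrow(W_{i,j})$ is sent to the module natural transformation whose components are $\bigoplus_{i,j}\mathrm{id}_{L^m_j}\boxtimes(\varphi_{i,j}\otimes\mathrm{id})$.

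Next I would produce the coherence data and check the local equivalence. Evaluating at simples gives $F^1(V)(L^n_i)\cong\bigoplus_j L^m_j\boxtimes V_{i,j}$, so that $\sA^m(L^m_j,F^1(V)(L^n_i))=V_{i,j}$ recovers the matrix. Running the matrix product $V\diamondsuit A=(\bigoplus_k V_{i,k}\otimes A_{k,j})$ through this identity, and using that a module functor commutes with $\boxtimes$ and $\oplus$ together with the symmetry of $\otimes$ on $\sA^1$, yields a canonical natural isomorphism
\[
F^2_{n,p,m}:F^1(V)\diamond F^1(A)=F^1(A)\circ F^1(V)\overset{\cong}{\Longrightarrow}F^1(V\diamondsuit A),
\]
the compositor; the unitor $F^2_n$ comes from the evident $F^1(\vi_n)\cong\mathrm{Id}_{\sA^n}$, which is again an instance of $(\clubsuit)$. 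Local essential surjectivity of $F^1_{n,m}$ is exactly Lemma~\ref{inn_hom}: an arbitrary module functor $\Phi:\sA^n\to\sA^m$ satisfies $\Phi(M)\cong\bigoplus_i\Phi(L^n_i)\boxtimes\sA^n(L^n_i,M)$ by $(\clubsuit)$, and decomposing each $\Phi(L^n_i)\cong\bigoplus_j L^m_j\boxtimes V_{i,j}$ exhibits $\Phi\cong F^1(V)$. Full faithfulness on 2-morphisms is likewise forced by $(\clubsuit)$: a module natural transformation $F^1(V)\Rightarrow F^1(W)$ is determined by its components on the $L^n_i$, and by semisimplicity of $\sA^m$ these are block-diagonal over $j$ and amount precisely to a matrix of linear maps $V_{i,j}\to W_{i,j}$, so $\tV_2(V,W)\to\tVc_2(F^1(V),F^1(W))$ is a bijection. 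The degenerate cases $n=0$ or $m=0$, where $\sA^0$ is the zero category, are checked directly and match the one-object trivial categories $\tV_1(n,0)$ and $\tV_1(0,m)$.

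The hard part will be verifying the hexagon coherence axiom for $F^2_{n,p,m}$. Because $\tVc$ is strict, with trivial associator and unitors, the hexagon collapses to the statement that the composite isomorphisms assembled from $F^2$ intertwine the nontrivial associativity constraint of $\tV$ --- itself built from the associator of $\otimes$ and the distributivity of $\otimes$ over $\oplus$ on $\sA^1$ --- with the identity. This amounts to Mac Lane coherence for the symmetric monoidal category of finite-dimensional vector spaces, applied one triple composite at a time, and the two unit squares reduce similarly to the canonical isomorphisms $\bK\otimes V\cong V\cong V\otimes\bK$. No conceptual difficulty arises here; the genuine labour is the index bookkeeping needed to match the two constraints componentwise.
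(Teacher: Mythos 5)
Your construction of $F$ is the same as the paper's (your $F^1(V)(M)=\bigoplus_{i,j}L^m_j\boxtimes(V_{i,j}\otimes\sA^n(L^n_i,M))$ is the paper's functor up to reordering tensor factors, and your compositor is its $(\spadesuit)$), but the second half of your argument takes a genuinely different route: where the paper constructs an explicit quasiinverse $G:\tVc\rightarrow\tV$ with $G^1(H)=(\sD(D_j,H(C_i)))_{i,j}$ together with its coherence data, and then leaves the 2-natural isomorphisms $FG\cong\mathrm{Id}$, $GF\cong\mathrm{Id}$ as an exercise, you instead invoke the bicategorical Whitehead theorem and verify biessential surjectivity (immediate from the definition of $\tVc_0$) plus local equivalence of the hom-categories, with Lemma~\ref{inn_hom} doing the work for essential surjectivity on 1-cells and Schur's lemma in $\sA^m$ for bijectivity on 2-cells. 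Your route is logically cleaner and shorter, since it never needs $G$ or the omitted 2-natural isomorphisms, at the price of being non-constructive (the quasiinverse exists by the cited theorem, via choices) and of importing a black box whose proof is itself nontrivial; the paper's explicit $G$ is not wasted effort, as its coefficient formula $V_{i,j}=\sD(D_j,H(C_i))$ is exactly what is reused later (e.g.\ in Lemma~\ref{morita_dec_group} and in the description of $\Theta_A^\circ$ via $\hom(S_i,S_j^{[\vg]})$). Two small points you share with the paper but should at least flag: the argument tacitly assumes module functors are $\bK$-linear (otherwise $\Phi$ need not preserve the finite direct sum in $(\clubsuit)$, and a non-additive functor would break essential surjectivity), and that the a priori lax structure maps $F_{M,X}$ of a 1-cell of $\tVc$ are invertible --- which does follow from the triangle axiom plus naturality in $X$ for finite-dimensional $X$, but is used silently when you rewrite $\Phi(M)\cong\bigoplus_i\Phi(L^n_i)\boxtimes\sA^n(L^n_i,M)$; also note that full faithfulness on 2-cells genuinely needs the 2-morphisms of $\tVc$ to be \emph{module} natural transformations, as you correctly use, since a bare natural transformation is not determined by its components on the $L^n_i$ alone. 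With those caveats made explicit, your proof is correct and the remaining labour (the hexagon for $F^2$ reducing to Mac Lane coherence for $(\mathrm{Vect},\otimes,\oplus)$) is exactly the same bookkeeping the paper also elides.
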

\begin{proof} 
Let us construct 
a 2-functor $F : \tV \rightarrow \tVc$.
We choose non-isomorphic simple modules $L_1^{(n)}, \ldots L_n^{(n)}$
in $\sA^n$ and set 
$$
F^0 (n) = \sA^n , \ \ 
F^1_{n,m} ((W_{i,j})_{n\times m}) :
M \xrightarrow{\clubsuit}
\bigoplus_i L_i^{(n)} \boxtimes \sA^n (L_i^{(n)} , M) 
\mapsto 
\bigoplus_{i,j} L_j^{(m)} \boxtimes \sA^n (L_i^{(n)} , M) \otimes W_{i,j}
.
$$
This defines functors $F^1_{n,m}$ on the objects. On the morphisms
it acts on the coefficient vector spaces:
$$
F^1_{n,m} ((\varphi_{i,j}):(V_{i,j})\Rightarrow (W_{i,j})) = 
\bigoplus_{i,j} 
\mbox{Id}_{L_j^{(m)} } \boxtimes \mbox{Id}_{\sA^n (L_i^{(n)}  , M)} \otimes  \varphi_{i,j} .
$$
The 2-isomorphism
$F^2_{n} :\vi_{\sA^n} \Rightarrow F^1_{n,n} (\vi_{n})$
is ($\clubsuit$)
combined with action isomorphisms $V\cong V \boxtimes \bK$:
$$
F^2_{n} (M) 
:
M
\xrightarrow{\clubsuit}
\bigoplus_i  L_i^{(n)} \boxtimes \sA^n (L_i^{(n)} , M) 
\xrightarrow{\cong}
\bigoplus_i  L_i^{(n)} \boxtimes \sA^n (L_i^{(n)} , M) \otimes \bK 
%
$$
Finally, the compatibilities
$$
F^2_{n,m,p}  : 
\diamondsuit_{\sA^n,\sA^m,\sA^p} \circ (F^1_{n,m}\times F^1_{m.p} )
\Rightarrow 
F^1_{n,p}\circ \diamondsuit_{n,m,p}
$$
boil down to associativities on the level of coefficient vector
spaces:  
\begin{equation}
\tag{$\spadesuit$}
F^2_{n,m,p}  (U,V)(M) \; : \;  \bigoplus_{i,j,k}
L_k^{(p)} \boxtimes   
( 
\Big( \sA^m (L_j^{(m)} ,  
(L_j^{(m)}\boxtimes   \sA^n (L_i^{(n)} , M) \otimes U_{i,j})) 
\Big) \otimes V_{j,k} )
\xrightarrow{\cong}
\end{equation}
$$
\xrightarrow{\cong}
\bigoplus_{i,j,k} 
L_k^{(p)} \boxtimes   
( 
\Big( \sA^n (L_i^{(n)} , M) \otimes U_{i,j}
\Big) \otimes V_{j,k} )
\xrightarrow{\cong}
\bigoplus_{i,j,k} 
L_k^{(p)} \boxtimes   
(\sA^n (L_i^{(n)} , M) \otimes
(U_{i,j} \otimes V_{j,k})).
$$

Let us now construct a quasiinverse 2-functor 
$G : \tVc \rightarrow \tV$.
In categories $\sC$ and $\sD$, $\sA^1$-module equivalent to $\sA^n$ and $\sA^m$
correspondingly, we choose non-isomorphic
simple
objects $C_1, \ldots C_n$ and $D_1, \ldots D_m$. 
Let
$H:\sC\rightarrow\sD$
be an $\sA^1$-module functor.
We use the functorial isomorphism ($\clubsuit$) to write down the
2-functor $G$:
$$
G^0 (\sC) \coloneqq n, \ \ 
G^0 (\sD) \coloneqq m, \ \ 
G^1_{\sC,\sD} (H) \coloneqq (V_{i,j})
\ 
\mbox{ where }
\
V_{i,j}\coloneqq
\sD(D_j, H(C_i)).
$$
On the 2-morphisms we reduce a natural transformation
$\varphi : H \Rightarrow J$ to linear maps on coefficient spaces:
$$
\bigoplus_j \mbox{Id}_{D_j} \boxtimes \varphi_{i,j}  
:
\bigoplus_j  D_j \boxtimes \sD(D_j, H(C_i)) 
\xrightarrow{\clubsuit}
H(C_i)
\xrightarrow{\varphi (C_i)}
J (C_i)
\xrightarrow{\clubsuit}
\bigoplus_j D_j \boxtimes \sD(D_j, J (C_i))
$$
so that
$$
G^1_{\sC,\sD} (\varphi) \coloneqq  (\varphi_{i,j}) .
$$
The 2-morphism
$G^2_{\sC} :\vi_{n} \rightarrow G^1_{\sC,\sC} (\vi_\sC) 
$
is the obvious one. For $i\neq j$, 
$$
G^2_{\sC,i,i} := \mbox{Id}: \bK \rightarrow \sC (C_i,C_i)\cong \bK,
\ \ \ 
G^2_{\sC,i,j} := \mbox{Id}: 0 \rightarrow \sC (C_i,C_j)\cong 0. 
$$
Finally, let us define the compatibilities
$$
G^2_{\sC,\sD,\sE} : 
\diamondsuit_{n,m,p} \circ (G^1_{\sC,\sD}\times G^1_{\sD,\sE} )
\Rightarrow 
G^1_{\sC,\sE}\circ \diamondsuit_{\sC,\sD,\sE}
$$
for each pair of functors
$H: \sC\rightarrow\sD$,
$J:\sD\rightarrow\sE$. Let $E_1,\ldots E_p$ be simple objects in
$\sE$.
The linear maps
$$
\varphi_{i,k}\coloneqq G^2_{\sC,\sD,\sE} (H,J)_{i,k}:
\bigoplus_j
\sD(D_j, H (C_i))
\otimes
\sE(E_k, J (D_j))
\rightarrow
\sE(E_k, JH (C_i))
$$
come from the computation of compositions
 using the same trick as in ($\spadesuit$):
$$
\bigoplus_k \mbox{Id}_{E_k}\boxtimes \varphi_{i,k} 
:
\bigoplus_{j,k}
E_k
\boxtimes 
\big(
\sE(E_k, J (D_j))
\otimes 
\sD(D_j, H (C_i))
\big)
\xrightarrow{\clubsuit}
JH(C_i)
\xrightarrow{\clubsuit}
\bigoplus_k E_k \boxtimes \sE(E_k, JH (C_i)) .
$$
It remains to write down
the natural 2-isomorphisms 
$FG \Rightarrow \tVc$
and
$GF \Rightarrow \tV$.
These 2-isomorphisms are straightforward, so we leave them as an exercise
for an interested reader.
\end{proof}

We define {\em a 2-representation} of $\sK$ as a 2-functor 
$R: \widetilde{\sK} \rightarrow \tV$
and
{\em a 2-module} for $\sK$ as a 2-functor 
$\sR: \widetilde{\sK} \rightarrow \tMd$.
Since 
$\tV$
and 
$\tVc$ are equivalent, we can think of a 2-representation 
as a ``semisimple'' 2-module 
$\sR: \widetilde{\sK} \rightarrow \tVc$.
The latter approach is convenient for construction of
2-representations.
The former approach is convenient for analysis of 2-representations.
Let us summarise what information a 2-representation (the former
version) contains  \cite[Definition 1]{AO10}.
\begin{mylist}
\item{1.}
A number $n=R^0 (\star)$. We call this number 
{\em the  degree} of $R$:
\item{2.}
A 1-morphism 
$R^1 (\vg) = R^1_{\star,\star} (\vg) = (V_{i,j}) : n \rightarrow n$
for every $\vg \in G$.
The dimensions of these vector spaces form a matrix 
$\dim (R^1 (\vg))\in \bN^{n\times n}$.
\item{3.}
2-Isomorphisms 
$R^1(\vg,x) = R^1_{\star,\star} (\vg\stackrel{x}{\Longrightarrow} \partial (x)\vg)= (\varphi_{i,j}) :
R^1(\vg) \Rightarrow R^1(\partial (x)\vg)$
for all $x\in H$, 
$\vg\in G$
that are subject to vertical multiplicativity rule
$R^1(\partial(x)\vg,y)R^1(\vg,x)=R^1(\vg,yx)$.
\item{4.}
A 2-isomorphism $R^2_{\star} = (\theta_{i,j}): \vi_{n} \Rightarrow R(1_G)$.  
\item{5.}
2-Isomorphisms 
$R^2 (\vf,\vg) =
R^2_{\star,\star,\star} (\vf,\vg) = (\psi_{i,j}):
R^1(\vf) \diamond R^1(\vg) \Rightarrow R^1(\vf\vg)$
for every pair $\vf,\vg\in G$ 
such that the pentagon
(or rather degenerated due to strictness of $\widetilde{\sK}$ hexagon)
diagram with two possible 2-morphisms 
$(R^1(\vf) \diamond R^1(\vg))\diamond R^1(\vh) \Rightarrow R^1(\vf\vg\vh)$
and the triangle diagrams with two possible 2-morphisms
$R^1(\vf) \diamond R^1(1_G) \Rightarrow R^1(\vf)$
and 
$R^1(1_G) \diamond R^1(\vg) \Rightarrow R^1(\vg)$
are all commutative. As this is a natural transformation of bifunctors, the naturality condition
$$
R^2 (\partial (x) \vf,\partial (y)\vg) \circ \big( R^1 (\vf,x) \diamond R^1 (\vg,y) \big)
=
R^1 (\vf\vg,x\,^{\vf}y) \circ R^2 (\vf,\vg) \; ,
$$
ought to be observed. 
\end{mylist}

We call the 2-representation $R$
{\em unital} if $R^2_{\star}$ is an identity
and {\em strict}
if all $R^2(\vf,\vg)$ and $R^2_{\star}$ are identities. 
We apply the same adjectives to a 2-module
under the similar conditions. 

Item 5 ensures that
$R^1 (\vg)\diamond R^1 (\vf)$
is 2-isomorphic to $R^{1}(\vg\vf)$ which implies
that
$\dim \circ R^1$ is a group homomorphism from $G$
to $\GL_n (\bN)\cong S_n$.
%
%
%
%
Item 3 further necessitates
$\dim (R^1 (\vg)) = \dim (R^1 (\partial(x)\vg))$.
Hence, hidden inside a 2-representation of $\sK$
we find a permutation action of
the fundamental group $\pi_1 (\sK)=G/\partial(H)$
on the finite set $\{1,2,\ldots n\}$.
It is fruitful to think of  a 2-representation of $\sK$
as a permutation action of $\pi_1 (\sK)$ together with some additional
data. The precise nature of this data will be uncovered later
(cf. Section~\ref{s2a}).

A homomorphism of 2-representations $\psi:R \rightarrow R^\prime$ 
or 2-modules $\psi:\sR \rightarrow \sR^\prime$ 
is a natural 2-transformation of 2-functors. 
An equivalence of 2-representations 
is a natural 2-isomorphism of 2-functors. 
Let $\tRep^n (\sK)$ be the class of
equivalence classes of 
2-representations of
$\widetilde{\sK}$
of degree $n$.
It is clear from the description above that 
$\tRep^n (\sK)$
is actually a set.

Before we proceed with our studies, we remark that the
2-representations of
$\sK$ can be
considered over any semisimple rigid monoidal category $\sC$.
Instead of $\tV$ one considers square matrices of objects of $\sC$.
Instead of $\tVc$ one considers semisimple $\sC$-module categories
with finitely many simple objects under their Ostrik's internal hom-s 
\cite{Ost}.
The results of this section easily extend to this greater generality.

\begin{center}
\section{Morita theory for a 2-group}
\label{s2}
\end{center}
An associative algebra $A$ admits a crossed module of 2-automorphisms
$\tAut (A) \coloneqq(A^\times \xrightarrow{\partial} \Aut (A))$
where $A^\times$ is the group of units of $A$ and
$\partial (x)$ is the inner automorphism
$y \mapsto xyx^{-1}$.
Let $\sK=(H\xrightarrow{\partial}G)$ be a crossed module. 
By {\em a $\sK$-algebra} we understand 
an associative algebra $A$ with a (left) action of $\sK$,
i.e., a crossed module homomorphism
$\omega_A: \sK\rightarrow \tAut (A)$. 

It is useful to introduce a weak version of this concept:
{\em a weak $\sK$-algebra} is 
an associative algebra $A$ with 
a weak crossed module homomorphism
$\omega_A: \sK\rightarrow \tAut (A)$.
We define {\em a weak crossed module homomorphism} 
$\omega_A: \sK\rightarrow \tAut (A)$
as a triple $(\omega_1,\omega_2,\omega_3)$
where
$\omega_1: G \rightarrow \Aut (A)$
is a group homomorphism,
$\omega_3: G\times G \rightarrow Z(A)^\times$
is a normalised cocycle, i.e., 
$$
\omega_3 (\vg, 1) = \omega_3 (1,\vg) = 1, \
\,^\vf\omega_3 (\vg, \vh)\omega_3 (\vf,\vg\vh) =
\omega_3 (\vf\vg, \vh)\omega_3 (\vf,\vg)
\mbox{ for all } \vf,\vg,\vh\in G ,$$
and 
$\omega_2: H \rightarrow A^\times$
is a unital projective group homomorphism with
the cocycle $\omega_3\circ(\partial\times\partial)$, i.e., 
$$
\omega_2 (xy) = 
\omega_3 (\partial x, \partial y)\omega_2 (x) \omega_2 (y)
\mbox{ for all } x,y\in H$$
such that they respect the crossed module structure maps:
$$
\omega_1 (\partial x) = \partial (\omega_2 (x)), \ \
\omega_2 (\,^\vf x)=
  \,^{\omega_1(\vf)} \omega_3 (\partial x, \vf^{-1})
  \omega_3(\vf, \partial x\, \vf^{-1})
  \omega_3(\vf, \vf^{-1})^{-1} 
\;^{\omega_1(\vf)} \omega_2 (x) \; .
$$

Note that
the normalised cocycle condition implies $\omega_2 (1) = 1$. 
Also observe that if $H$ is trivial (so that $\widetilde{\sK}$ is just the group $G$),
a weak homomorphism is just a homomorphism of $G$ with
a normalised cocycle.

Weak crossed module homomorphisms
have been studied by Noohi \cite[Def. 8.4]{Noo}
but our concept is different.
Noohi's weak homomorphism 
is also a triple $(\omega_1,\omega_2,\omega_3)$
of the maps with the same domains and ranges.
The difference is that $\omega_2$ is now required
to be a homomorphism of groups, while 
$\omega_1$
is a projective homomorphism with
the cocycle $\partial \circ \omega_3$.
There is a different respect condition. 
Since both notions yield homomorphisms of 2-groups, 
it is plausible that there may be a common generalisation.
A promising concept is a quadruple 
$(\omega_1,\omega_2,\omega_3,\omega^\prime_3)$
where
$\omega_2$
is a projective homomorphism with
the cocycle $\omega_3\circ(\partial\times\partial)$ and 
$\omega_1$
is a projective homomorphism with
the cocycle $\partial \circ \omega^\prime_3$.
These quadruples need to define homomorphisms of the corresponding
2-groups. We leave it at that hoping that future research
will clarify a connection between these two notions of
weak homomorphism.

Let $A$ be a weak $\sK$-algebra. The group $G$
acts on the category of left $A$-modules 
$A\mbox{-}\Mod$
on the right~\cite{GRR11}.
Each automorphism $\vg\in \Aut (A)$ gives an endofunctor 
$[\vg]:A\mbox{-}\Mod\rightarrow A\mbox{-}\Mod$.
For an $A$-module $M$,
the twisted $A$-module $M^{[\vg]}$ is equal to $M$
as a vector space and the new action:
$a \cdot^{[\vg]} m = \vg(a)m$.
On morphisms $\varphi^{[\vg]}=\varphi$.
Notice that $[\vf]\circ[\vg]=[\vg\vf]$,
so that this is a right action.

We would like to extend this 
to an action 
of the 2-group $\widetilde{\sK}$. 
With this in mind, we consider the 2-group 
$\tsAut(A\mbox{-}\Mod)$ 
of the automorphisms of the category of left $A$-modules:
\begin{mylist}
\item{1.}
The 0-objects of $\tsAut(A\mbox{-}\Mod)$ is a one point set
$\{\star\}$.
\item{2.}
The 1-objects $\tsAut(A\mbox{-}\Mod)_1(\star, \star)$ 
are strong $\sA_1$-module autoequivalences of $A\mbox{-}\Mod$.
\item{3.}
For a pair of autoequivalences $F_1,F_2$ the 2-morphisms 
$\tsAut(A\mbox{-}\Mod)_2(F_1, F_2)$ 
are $\sA_1$-module natural isomorphisms $F_1 \Rightarrow F_2$.
\item{4.}
The composition bifunctor is the composition of functors 
$F_1\diamond F_2\coloneqq F_2\circ F_1= F_2F_1$.
\end{mylist}
This 2-group is not strict because
functors have quasi-inverses, not inverses, in general.
Another way to think of $\tsAut(A\mbox{-}\Mod)$
is a 2-subcategory of $\tMd$ with one 0-object.

Before we proceed, let us describe 
the natural transformations between the group twists.
\begin{lemma}
  \label{nat_trans}
  Suppose $\vf$ and $\vg$ are automorphisms of a ring $A$.
  Then the map
  $$
  \Upsilon:
  \{x\in A \,\mid\, \forall a\in A \ \  x a=\vg(\vf^{-1}(a))x\}
  \rightarrow
  \mbox{\rm Nat.Trans} ([\vf],[\vg]), \ \
  \Upsilon (x)_M : m \mapsto x \cdot m
  $$
  is a bijection.
\end{lemma}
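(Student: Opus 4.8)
The plan is to run a Yoneda-style argument in which the regular left module $A$ plays the role of a generator: a natural transformation $[\vf]\Rightarrow[\vg]$ will be completely determined by its single component at $A$, and conversely any admissible value there will spread out to all modules by naturality.

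First I would check that $\Upsilon$ is well defined, i.e. that each $\Upsilon(x)_M\colon M^{[\vf]}\to M^{[\vg]}$ is $A$-linear and that the family $(\Upsilon(x)_M)_M$ is natural. Linearity is where the defining relation enters: for $a\in A$, $m\in M$ one has $\Upsilon(x)_M(a\cdot^{[\vf]}m)=x\,\vf(a)\,m$ while $a\cdot^{[\vg]}\Upsilon(x)_M(m)=\vg(a)\,x\,m$, and these agree for all $m$ exactly when $x\,\vf(a)=\vg(a)\,x$, which after the substitution $a=\vf^{-1}(b)$ is precisely the relation $x b=\vg(\vf^{-1}(b))x$ cutting out the source set. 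Naturality against a morphism $\varphi\colon M\to N$ is automatic, since $\varphi^{[\vf]}=\varphi^{[\vg]}=\varphi$ and $\varphi(xm)=x\varphi(m)$ by $A$-linearity of $\varphi$. Thus $\Upsilon$ genuinely lands in $\mbox{\rm Nat.Trans}([\vf],[\vg])$, and injectivity is then immediate: evaluating $\Upsilon(x)$ at the regular module on the element $1$ returns $\Upsilon(x)_A(1)=x\cdot 1=x$, so $x$ is recovered from $\Upsilon(x)$.

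The heart of the matter is surjectivity. Given $\eta\colon[\vf]\Rightarrow[\vg]$ I would set $x\coloneqq\eta_A(1)$, where $A$ is viewed as the regular left module. For each $m\in M$ the map $\hat m\colon A\to M$, $a\mapsto a m$, is a morphism of left $A$-modules, hence a morphism in $A\mbox{-}\Mod$ with $\hat m^{[\vf]}=\hat m^{[\vg]}=\hat m$; applying naturality of $\eta$ to $\hat m$ and evaluating at $1$ yields $\eta_M(m)=\hat m(\eta_A(1))=\hat m(x)=x m=\Upsilon(x)_M(m)$. So $\eta$ is ``multiplication by $x$'' on every module. It then remains to observe that $x$ actually lies in the source set: since $\eta_A$ is $A$-linear, $x\,\vf(a)=\eta_A(\vf(a))=\eta_A(a\cdot^{[\vf]}1)=a\cdot^{[\vg]}\eta_A(1)=\vg(a)\,x$, which is exactly the required relation. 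Therefore $\eta=\Upsilon(x)$ and $\Upsilon$ is onto.

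I expect the only real subtlety to be the bookkeeping of the twists: one must keep straight that $[\vf]$ and $[\vg]$ act as the identity on underlying maps while changing the module structure through $\vf$ and $\vg$, so that both the ``generation'' of an arbitrary $M$ by the regular module via the maps $\hat m$ and the extraction of the defining relation from $A$-linearity of $\eta_A$ go through cleanly. One should also confirm that the regular module $A$ is admitted as an object of the ambient category $A\mbox{-}\Mod$ so that these evaluations are legitimate; granting that, both directions reduce to short diagram chases.
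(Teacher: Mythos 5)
Your proposal is correct and follows essentially the same route as the paper: well-definedness via the twisted-action computation, and bijectivity via the regular module $\,_AA$ acting as a generator, with the inverse $\eta \mapsto \eta_A(1)$ and the maps $\hat m \colon a \mapsto am$ spreading the value at $A$ to every module. Your only addition is the explicit verification that $\eta_A(1)$ satisfies the relation $xb = \vg(\vf^{-1}(b))x$ (via $A$-linearity of $\eta_A$), a step the paper leaves implicit in asserting that its inverse map $\Xi$ lands in the stated set.
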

\begin{proof}
  Observe that $\Upsilon (x)_M \in \hom (M^{[\vf]},M^{[\vg]})$
  for any $A$-module $M$:
  $$
\Upsilon(x)_M ( a \cdot^{[\vf]} m) 
=
x\vf(a)m 
=
\vg(\vf^{-1} (\vf (a))) xm 
=
\vg (a) xm
=
a \cdot^{[\vg]} 
\Upsilon(x)_M (m)
$$
for all $m\in M$, $a\in A$.
Since $F(xm)=xF(m)$ for any homomorphism
  $F$, $\Upsilon (x)$ is a natural transformation.
  Hence, $\Upsilon$
  is a well-defined function.

To show that this is a bijection we construct the inverse function:
  $$
\Xi:
 \mbox{\rm Nat.Trans} ([\vf],[\vg])
  \rightarrow
   \{x\in A \,\mid\, \forall a\in A \ \  x a=\vg(\vf^{-1}(a))x\}, \ \ 
  \Xi (\varphi) = \varphi_A (1).
  $$  
  The equality $\Xi \Upsilon = \mbox{Id}$ is obvious:
  $\Xi (\Upsilon (x))= \Upsilon(x)_A (1) = x$.
  The opposite equality $\Upsilon \Xi= \mbox{Id}$ follows
  from the fact that $\,_AA$ is a generator of $A\mbox{-}\Mod$
  so that a natural transformation $\varphi$ is uniquely determined
  by $\varphi_A$: using the $A$-module homomorphism
  $F: A\rightarrow M$, $F(a)=am$, we conclude that
  $\varphi_M (m) = \varphi_M (F(1)) = F (\varphi_A (1))
  = \varphi_A (1)m$.
\end{proof}


Armed with an understanding of natural transformations between the twists
by automorphisms, we can mould $A\mbox{-}\Mod$ into a  2-module
for $\wAut (A)$:
\begin{prop}
\label{ext_1}  
The assignment $\Theta (\vg) = [\vg]$ extends
to a strict 2-module 
$\Theta : \wAut (A) \rightarrow \tsAut (A\mbox{-}\Mod)$.
\end{prop}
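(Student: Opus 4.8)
The plan is to build $\Theta$ as an honest strict unital $2$-functor by giving its data on objects and $2$-morphisms and then verifying the (few, thanks to strictness) coherences. Since both $\wAut(A)$ and $\tsAut(A\mbox{-}\Mod)$ have a single $0$-object, $\Theta^0$ is forced. On a $1$-morphism $\vg\in\Aut(A)$ I set $\Theta^1(\vg)=[\vg]$; this is a strong $\sA^1$-module autoequivalence, with quasi-inverse $[\vg^{-1}]$ (indeed $[\vg]\circ[\vg^{-1}]=[\vg^{-1}\vg]=[\mathrm{id}_A]=\mathrm{Id}$ by the right-action identity $[\vf]\circ[\vg]=[\vg\vf]$), the module structure being preserved because twisting the $A$-action commutes with $-\boxtimes-$. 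On a $2$-morphism $\vg_1\stackrel{h}{\Longrightarrow}\vg_2$ (so $h\in A^\times$ and $\vg_2=\partial(h)\vg_1$) I set $\Theta^1(h)=\Upsilon(h)$ via Lemma~\ref{nat_trans}: the defining relation $xa=\vg_2(\vg_1^{-1}(a))x$ holds for $x=h$ because $\vg_2(\vg_1^{-1}(a))=\partial(h)(a)=hah^{-1}$, so $\Upsilon(h)\colon[\vg_1]\Rightarrow[\vg_2]$ is well defined; it is a natural isomorphism with inverse $\Upsilon(h^{-1})$, hence a genuine $2$-morphism of $\tsAut(A\mbox{-}\Mod)$.

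Next I would check that $\Theta^1_{\star,\star}$ is a functor on the hom-category. The identity $2$-morphism on $\vg$ carries $h=1$, and $\Upsilon(1)=\mathrm{Id}$. Vertical composition in $\wAut(A)$ sends $(\vg_2\stackrel{h_2}{\Longrightarrow}\vg_3)\circ(\vg_1\stackrel{h_1}{\Longrightarrow}\vg_2)$ to $\vg_1\stackrel{h_2h_1}{\Longrightarrow}\vg_3$, while on the target side $\Upsilon(h_2)\circ\Upsilon(h_1)$ acts by $m\mapsto h_2(h_1m)=(h_2h_1)m=\Upsilon(h_2h_1)(m)$, so vertical composition is preserved. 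For unitality, $\vi_\star=\mathrm{id}_A$ and $\Theta^1(\mathrm{id}_A)=[\mathrm{id}_A]=\mathrm{Id}=\vi_\star$, so I take $\Theta^2_\star=\mathrm{Id}$.

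For strictness I must show $\diamond\circ(\Theta^1\times\Theta^1)=\Theta^1\circ\diamond$ on the nose, whence $\Theta^2_{\star,\star,\star}=\mathrm{Id}$ and the hexagon and square coherences are trivial. On objects this is immediate: $[\vf]\diamond[\vg]=[\vg]\circ[\vf]=[\vf\vg]=\Theta^1(\vf\vg)$. The one delicate point, which I expect to be the main obstacle, is matching the horizontal composite of $2$-morphisms. Given $\vf\stackrel{h}{\Longrightarrow}\vf'$ and $\vg\stackrel{h_1}{\Longrightarrow}\vg'$, their source composite in $\wAut(A)$ is $\vf\vg\stackrel{h\,^{\vf}h_1}{\Longrightarrow}\vf'\vg'$, so I must verify that the Godement product $\Upsilon(h)\diamond\Upsilon(h_1)\colon[\vg][\vf]\Rightarrow[\vg'][\vf']$ equals $\Upsilon(h\,^{\vf}h_1)$. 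Writing the whiskered composite at a module $M$ as $\Upsilon(h_1)_{M^{[\vf']}}\circ[\vg](\Upsilon(h)_M)$, the subtlety is that the component of $\Upsilon(h_1)$ at the twisted module $M^{[\vf']}$ acts through the $M^{[\vf']}$-action, i.e.\ by left multiplication by $\vf'(h_1)$ on the underlying space, not by $h_1$; whiskering by $[\vg]$ leaves the linear map $m\mapsto hm$ unchanged. Hence the composite is $m\mapsto \vf'(h_1)\,h\,m$, and using $\vf'=\partial(h)\vf$ one gets $\vf'(h_1)h=h\vf(h_1)h^{-1}h=h\,^{\vf}h_1$, exactly $\Upsilon(h\,^{\vf}h_1)$. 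Keeping this module-structure bookkeeping straight — which is where the $\,^{\vf}$-action and all the content enter — is the heart of the matter; once it is done, every remaining coherence diagram is an identity and $\Theta$ is a strict unital $2$-module as claimed.
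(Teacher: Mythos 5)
Your proof is correct and takes essentially the same approach as the paper: you define $\Theta^1(\vg)=[\vg]$ and send a 2-morphism with label $h$ to $\Upsilon(h)$ via Lemma~\ref{nat_trans}, check well-definedness and vertical functoriality, and take $\Theta^2_{\star}$ and $\Theta^2_{\star,\star,\star}$ to be identities. Your explicit Godement-product computation $\Upsilon(h)\diamond\Upsilon(h_1)=\Upsilon(h\,^{\vf}h_1)$ correctly spells out the strictness verification that the paper's proof merely asserts, and it amounts to the whiskering identity $[\vf]\diamond\Upsilon(x)=\Upsilon(\,^{\vf}x)$ that the paper records immediately after the proposition.
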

\begin{proof}
Inevitably, 
$\Theta^0 (\star) = \star$.
We define the functor $\Theta^1_{\star,\star}$ using the map from Lemma~\ref{nat_trans}:
$$
\Theta^1_{\star,\star} (\vg)
\coloneqq [\vg], \ \ 
\Theta^1_{\star,\star} (\vg_1\stackrel{x}{\Longrightarrow}\vg_2) 
=
\Theta^1_{\star,\star} (\xymatrix{
\star
   \ar@/^2pc/[rr]_{\quad}^{\vg_2}="1"
   \ar@/_2pc/[rr]_{\vg_1}="2"
&& \star
\ar@{}"2";"1"|(.2){\,}="7"
\ar@{}"2";"1"|(.8){\,}="8"
\ar@{=>}"7" ;"8"^{x}
} )
\coloneqq \Upsilon (x).
$$
Since each $[\vg]$ is an $\sA^1$-module functor,
$\Theta^1_{\star,\star}$ is well-defined. 
Observe that $\vg_2=\partial (x) \vg_1$
so that
$
xa = xax^{-1}x
=
\vg_2(\vg_1^{-1}(a))x
$
for all $a\in A$
and $\Upsilon (x)$ is
a well-defined  $\sA^1$-module natural transformation.

Let us verify 
that  $\Theta^1_{\star,\star}$ is functor:
$$
\Theta^1_{\star,\star} (
[\vg_2\stackrel{y}{\Longrightarrow}\vg_3]
\circ
[\vg_1\stackrel{x}{\Longrightarrow}\vg_2]
) =
\Theta^1_{\star,\star} (\vg_1\stackrel{yx}{\Longrightarrow}\vg_3)
=
\Upsilon(yx)=\Upsilon(y)\Upsilon(x) =
\Theta^1_{\star,\star} (\vg_2\stackrel{y}{\Longrightarrow}\vg_3)
\Theta^1_{\star,\star} (\vg_1\stackrel{x}{\Longrightarrow}\vg_2) \; .
$$
The 2-isomorphism $\Theta^2_{\star}$
and the compatibilities 
$\Theta^2_{\star,\star,\star}$ can be chosen to be identities
making $\Theta$ strict.
\end{proof}

Now we are ready ``to compose'' the canonical homomorphism
of 2-groups 
$\Theta : \wAut (A) \rightarrow \tsAut (A\mbox{-}\Mod)$
with a weak crossed module homomorphism
$\omega_A: \sK\rightarrow \tAut (A)$.
We have to deal repeatedly with expressions of
the form $[\vf]\diamond \Upsilon (x)$ that we transform to
$\Upsilon(\,^\vf x)$. It is instructive
to apply both expression to an element $m\in M$.
The former gives $x\cdot^{[\vf]}m$ while the latter gives
$(\,^\vf x)\cdot m$. Hence, they are equal.

\begin{prop}
\label{ext_2}
The structure of a weak $\sK$-algebra on $A$ gives rise to
a unital 2-module 
$\Theta_A : \widetilde{\sK}\rightarrow \tsAut (A\mbox{-}\Mod)$
defined as follows:
$$
\Theta_A^0(\star) = A\mbox{-}\Mod, \ \ 
\Theta_{A\; \star,\star} ^1(\vg) = [\omega_1(\vg)], \ \
\Theta^1_{A}(\vg, x) =
\Theta^1_{A \; \star,\star} (\vg\stackrel{x}{\Longrightarrow} \partial (x)\vg)= 
\Upsilon (\omega_3(\partial x, \vg) \omega_2 (x))
,
$$
$$
\Theta^2_{A \; \star} = \mbox{Id}, \ \ \ 
  \Theta^2_{A} (\vf, \vg) =
  \Theta^2_{A \; \star,\star,\star} (\vf, \vg) =
  \Upsilon (\omega_3 (\vf,\vg)).
$$
If $A$ is a $\sK$-algebra, $\Theta_A$ is
a strict  2-module. 
\end{prop}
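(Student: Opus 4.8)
The plan is to check the 2-functor axioms for $\Theta_A$ by hand, using Lemma~\ref{nat_trans} to turn every coherence diagram into an identity among units of $A$. Two bookkeeping rules for the symbol $\Upsilon$ do most of the translating. Vertical composition is multiplication, $\Upsilon(a)\circ\Upsilon(b)=\Upsilon(ab)$, as already used in Proposition~\ref{ext_1}. For the horizontal composition of $\tsAut$ a short computation with the underlying module actions (and the defining relation of $\Upsilon(z_1)$) gives $\Upsilon(z_1)\diamond\Upsilon(z_2)=\Upsilon(z_1\,\vf(z_2))$, where $\vf\in\Aut(A)$ is the source automorphism of $\Upsilon(z_1)$; this specialises to the identity $[\vf]\diamond\Upsilon(x)=\Upsilon(\,^\vf x)$ noted before the statement. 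Once these are in place, every diagram to be checked becomes an equation between central elements and values of $\omega_2$ in $A$.

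First I would verify that $\Theta^1_{A\,\star,\star}$ is a well-defined functor. That $\Upsilon(\omega_3(\partial x,\vg)\omega_2(x))$ really is a $2$-morphism $[\omega_1(\vg)]\Rightarrow[\omega_1(\partial(x)\vg)]$ is the criterion of Lemma~\ref{nat_trans}; because $\omega_3$ is central it collapses to the structure-map relation $\omega_1(\partial x)=\partial(\omega_2(x))$. Functoriality on vertical composites unwinds, via $\Upsilon(a)\circ\Upsilon(b)=\Upsilon(ab)$, to $\omega_3(\partial(yx),\vg)\omega_2(yx)=\omega_3(\partial y,\partial(x)\vg)\omega_2(y)\,\omega_3(\partial x,\vg)\omega_2(x)$, which I expect to get from the projective-homomorphism rule $\omega_2(yx)=\omega_3(\partial y,\partial x)\omega_2(y)\omega_2(x)$, the centrality of $\omega_3$, a single instance of the $\omega_3$-cocycle identity, and the observation that the inner automorphism $\omega_1(\partial y)$ fixes the central element $\omega_3(\partial x,\vg)$. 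The unit datum $\Theta^2_{A\,\star}=\mbox{Id}$ is legitimate since $\omega_1(1)=\mbox{Id}_A$, and each $\Theta^2_A(\vf,\vg)=\Upsilon(\omega_3(\vf,\vg))$ is a well-defined automorphism of $[\omega_1(\vf\vg)]$ exactly because $\omega_3(\vf,\vg)\in Z(A)^\times$.

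The two coherence diagrams then reduce to cocycle bookkeeping. Applying the two $\Upsilon$-rules, the degenerate hexagon (associativity) becomes $\omega_3(\vf\vg,\vh)\omega_3(\vf,\vg)=\omega_3(\vf,\vg\vh)\,^\vf\omega_3(\vg,\vh)$, i.e. precisely the cocycle condition imposed on $\omega_3$, while the triangle (unit) diagrams collapse because $\omega_3(\vf,1)=\omega_3(1,\vg)=1$; the latter simultaneously shows $\Theta_A$ is normalised, and $\Theta^2_{A\,\star}=\mbox{Id}$ that it is unital. The step I expect to absorb most of the effort is the naturality of $\Theta^2_A$ as a transformation of bifunctors, that is
$$
\Theta^2_A(\partial(x)\vf,\partial(y)\vg)\circ\big(\Theta^1_A(\vf,x)\diamond\Theta^1_A(\vg,y)\big)
=
\Theta^1_A(\vf\vg,x\,^\vf y)\circ\Theta^2_A(\vf,\vg).
$$
Using the horizontal rule this turns into an equality of two elements $\Upsilon(\cdots)$, and matching them needs all of the defining relations of a weak crossed-module homomorphism at once: the compatibility formula for $\omega_2(\,^\vf y)$, the equivariance $\partial(\,^\vf y)=\vf\,\partial(y)\,\vf^{-1}$, the projective rule for $\omega_2(x\,^\vf y)$, and several applications of the $\omega_3$-cocycle identity. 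I would isolate this as a self-contained computation. Finally, the last sentence is immediate: if $A$ is an honest $\sK$-algebra then $\omega_3\equiv1$, so every $\Theta^2_A(\vf,\vg)=\Upsilon(1)=\mbox{Id}$ and $\Theta_A$ is strict, and it is unital as just noted.
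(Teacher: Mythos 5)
Your proposal is correct and takes essentially the same route as the paper's proof: well-definedness via Lemma~\ref{nat_trans} (your general horizontal rule $\Upsilon(z_1)\diamond\Upsilon(z_2)=\Upsilon(z_1\,\vf(z_2))$ is a harmless strengthening of the paper's $[\vf]\diamond\Upsilon(x)=\Upsilon(\,^\vf x)$), vertical functoriality from the projective rule plus one cocycle identity and triviality of inner automorphisms on $Z(A)$, the collapsed hexagon reducing to the $\omega_3$-cocycle condition, the triangles and normalisation from $\omega_3$ being normalised, and strictness from $\omega_3\equiv 1$. The only difference is that the paper carries out in full the naturality computation for $\Theta^2_A$, which you outline with precisely the right ingredients (the $\omega_2(\,^\vf y)$ compatibility, $\partial(\,^\vf y)=\vf\,\partial(y)\,\vf^{-1}$, the projective rule for $\omega_2(x\,^\vf y)$, and repeated cocycle identities).
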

\begin{proof}
  Lemma~\ref{nat_trans} guarantees that
  the 2-isomorphisms are well-defined:  
  $\Theta^1_A(\vg,x)$
  acts $\Theta^1_A(\vg) \Rightarrow \Theta_A^1(\partial (x)\vg)$,
  while $\Theta^2_A (\vf,\vg)$ acts
  $\Theta^1_A(\vf) \diamond \Theta^1_A(\vg) \Rightarrow \Theta^1_A(\vf\vg)$.
  The vertical multiplicativity follows from 
  the definition
  of a weak homomorphism of crossed modules:
  $$
  \Theta^1_A(\partial x\, \vg,y)\Theta^1_A(\vg,x)=
  \Upsilon (\omega_3(\partial y, \partial x \, \vg) \omega_2 (y))
  \Upsilon (\omega_3(\partial x,\vg) \omega_2 (x))
  = \Upsilon (\omega_3(\partial y, \partial x \,\vg)
  \,^{\partial y} \omega_3(\partial x, \vg) \omega_2 (y)\omega_2 (x))
  $$
  $$
    = \Upsilon (\omega_3(\partial y\, \partial x,\vg) \omega_3(\partial y, \partial x) \omega_2 (y)\omega_2 (x))
    = \Upsilon (\omega_3(\partial (yx),\vg) \omega_2 (yx))
    =
  \Theta^1_A(\vg,yx) .
  $$
  The equality between the lines is the cocycle condition.
  A similar reasoning proves the naturality condition for $\Theta^2_{A\; \star,\star,\star}$.
  Checking it,  we 
   use the centrality property $\,^{\partial x}\omega_3(\vf,\vg)=\omega_3(\vf,\vg)$.
   To help the reader we underline terms undergoing transformations at the next equality.
\begin{align*}
\Theta^2_A (\partial (x) \vf,\partial (y)\vg)
\circ \big( \Theta^1_A (\vf,x) \diamond \Theta^1_A (\vg,y) \big)
=
&
\Upsilon (\omega_3(\partial x \, \vf,  \partial y \, \vg ))
\Upsilon ( \underline{\,^{\partial x \, \vf}[\omega_3(\partial y, \vg) \omega_2 (y)]})
\cdot
\\
\cdot
\Upsilon (\omega_3(\partial x, \vf) \omega_2 (x))
=
\Upsilon \bigg(
&
\underline{
  \omega_3(\partial x \, \vf,  \partial y \, \vg )
\omega_3(\partial x, \vf) 
}
\,^\vf\omega_3(\partial y, \vg)
\underline{
\,^{\partial x\, \vf}\omega_2 (y)
\omega_2 (x)}
\\
=
\omega_3(\partial x ,  \vf  \partial y \, \vg )
\underline{
  \omega_3(\vf, \partial y\, \vg) 
\,^\vf\omega_3(\partial y, \vg)
}
\omega_2 (x)
\underline{\,^\vf\omega_2 (y)}
=
&
\omega_3(\partial x ,  \vf  \partial y \, \vg )
\underline{\omega_3(\vf, \partial y)} 
\omega_3(\vf\partial y, \vg)
\omega_2 (x) \cdot
\\
\cdot
\underline{
\,^\vf\omega_3(\partial y, \vf^{-1})^{-1}
\omega_3(\vf, \partial y\, \vf^{-1})^{-1}
}
\ 
\underline{\omega_3(\vf, \vf^{-1})}
\omega_2 (\,^\vf y)
&
=
\omega_3(\partial x ,  \vf  \partial y \, \vg )
\underline{
  \omega_3(\vf\partial y, \vf^{-1})^{-1}
\,^\vf\omega_3(\vf^{-1}, \vf)
}
\\
\omega_3(\vf\partial y, \vg)
\omega_2 (x)
\omega_2 (\,^\vf y)
=
\omega_3(\partial x, \vf \partial y\, \vg)
&
\underline{
  \omega_3(\vf \partial y\, \vf^{-1},\vf)
  \omega_3(\vf \partial y\,\vg)
}
\omega_2 (x) \omega_2(\,^{\vf}y)
=
\\
\underline{
\omega_3(\partial x, \vf \partial y\, \vg)
\omega_3(\vf \partial y\, \vf^{-1},\vf\vg)
}
\omega_3(\vf,\vg )
\omega_2 (x) \omega_2(\,^{\vf}y)
=
&
\underline{
\omega_3(\partial x\, \vf \partial y\, \vf^{-1}, \vf\vg)
}
\omega_3(\vf,\vg )
\underline{
\omega_3(\partial x, \vf \partial y\, \vf^{-1})
  \omega_2 (x) \omega_2(\,^{\vf}y)}
\bigg)
\\
=
\Upsilon (\omega_3(\partial (x\,^{\vf}y), \vf\vg) \omega_2 (x\,^{\vf}y))
\Upsilon (\omega_3(\vf,\vg ))
&
=
\Theta^1_A (\vf\vg,x\,^{\vf}y) \circ \Theta^2_A (\vf,\vg) \; .
\end{align*}
Let us now verify commutativity
of the pentagon
diagram. 
One of the key
2-morphisms
\linebreak
$(\Theta^1_A(\vf) \diamond \Theta^1_A(\vg))\diamond \Theta^1_A(\vh)
\Rightarrow \Theta^1_A(\vf\vg\vh)$
is
$$
\Upsilon (\omega_3 (\vf\vg,\vh))\Upsilon (\omega_3 (\vf,\vg)) =
\Upsilon (\omega_3 (\vf\vg,\vh)\omega_3 (\vf,\vg)).
$$
The second key 2-morphism is
$$
\Upsilon (\omega_3 (\vf,\vg\vh))\circ \Big([\vf] \diamond \Upsilon (\omega_3 (\vg,\vh)) \Big)=
\Upsilon (\omega_3 (\vf,\vg\vh)\,^\vf\omega_3 (\vg,\vh)).
$$
They are equal by the cocycle property of $\omega_3$.
Commutativity of the triangle diagrams is obvious because 
$\Theta^2_{A \; \star} = \mbox{Id}$.
This finishes the proof of the first statement.

A $\sK$-algebra is a weak $\sK$-algebra with trivial $\omega_3$.
Thus, all $\Theta^2_{A} (\vf, \vg)$ are trivial and the homomorphism
$\Theta_A$ is strict. This proves the second statement.
\end{proof}

The following {\em Extension Lemma} is a partially converse statement to
Proposition~\ref{ext_2}. 

\begin{lemma} 
\label{Extension}
Let $\sK=(H\rightarrow G)$ be a crossed module,
$A$ an associative $G$-algebra.
Suppose we have a unital 2-module
$\Omega : \widetilde{\sK}\rightarrow \tsAut (A\mbox{-}\Mod)$
whose restriction to $G$ comes from an action
$\varpi : G\rightarrow \Aut (A)$:
$$
\Omega_{\star,\star}^1 (\vg) = [\varpi (\vg)]
\ \
\mbox{ for all }
\vg \in G.
$$
Then there exists
a weak $\sK$-algebra structure on $A$
such that $\Omega = \Theta_A$ (the latter is defined in Proposition~\ref{ext_2}).
Furthermore, if $\Omega$ is strict, then $A$ is a $\sK$-algebra.
\end{lemma}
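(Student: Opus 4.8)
The plan is to read the weak $\sK$-algebra data $(\omega_1,\omega_2,\omega_3)$ off $\Omega$ through the bijection $\Upsilon$ of Lemma~\ref{nat_trans} and its inverse $\Xi(\varphi)=\varphi_A(1)$, and then to verify that $\Omega$ coincides with the $2$-module $\Theta_A$ that Proposition~\ref{ext_2} builds from these data. We are forced to set $\omega_1:=\omega$, since the restriction hypothesis says $\Omega^1(\vg)=[\omega(\vg)]$. For the rest I put $\omega_3(\vf,\vg):=\Xi(\Omega^2(\vf,\vg))$ and $\omega_2(x):=\Xi(\Omega^1(1,x))$. The first is a \emph{central} unit, because $\Omega^2(\vf,\vg)$ is a natural automorphism of $[\omega_1(\vf\vg)]$, so Lemma~\ref{nat_trans} forces its generator into $Z(A)$, and $\Omega^2$ is a $2$-isomorphism; the second lies in $A^\times$ since $\Omega^1(1,x)$ is a natural isomorphism. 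Moreover, reading the source and target of $\Omega^1(1,x)\colon[\mathrm{id}]\Rightarrow[\omega_1(\partial x)]$ through the defining relation of Lemma~\ref{nat_trans} gives $\omega_2(x)\,a=\omega_1(\partial x)(a)\,\omega_2(x)$ for all $a$, i.e. the first crossed-module compatibility $\omega_1(\partial x)=\partial(\omega_2(x))$ is automatic.

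Next I would record two bookkeeping facts about $\Xi$. First, $\Xi$ turns vertical composition into the product of $A$, namely $\Xi(\varphi\circ\psi)=\Xi(\varphi)\Xi(\psi)$, since $(\varphi\circ\psi)_A(1)=\varphi_A(\psi_A(1))=\varphi_A(1)\psi_A(1)$ by the generator description in the proof of Lemma~\ref{nat_trans}. Second, the two whiskerings behave differently: left whiskering satisfies $[\omega_1(\vf)]\diamond\Upsilon(z)=\Upsilon(\,^{\omega_1(\vf)}z)$ (the identity already used after Proposition~\ref{ext_1}), whereas right whiskering $\Upsilon(z)\diamond\mathrm{Id}_{[\tau]}$ leaves the generator $z$ unchanged, so $\Xi$ is transparent to it. With these in hand the normalisation of $\Omega$ gives $\omega_3(\vg,1)=\omega_3(1,\vg)=1$, and applying $\Xi$ to the pentagon coherence of $\Omega$ (whose associator is trivial in $\tsAut$) yields $\omega_3(\vf\vg,\vh)\,\omega_3(\vf,\vg)=\,^{\omega_1(\vf)}\omega_3(\vg,\vh)\,\omega_3(\vf,\vg\vh)$, so $\omega_3$ is a normalised $Z(A)^\times$-valued cocycle.

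The crux is to identify $\Omega^1(\vg,x)$ for arbitrary $\vg$. Writing $c(\vg,x):=\Xi(\Omega^1(\vg,x))$, I would specialise the naturality square $\Omega^2(\partial x\,\vf,\partial y\,\vg)\circ\big(\Omega^1(\vf,x)\diamond\Omega^1(\vg,y)\big)=\Omega^1(\vf\vg,x\,^{\vf}y)\circ\Omega^2(\vf,\vg)$ with first slot $(1,x)$ and second slot $(\vg,1)$: using $\Omega^1(\vg,1)=\mathrm{Id}$, $\Omega^2(1,\vg)=\mathrm{Id}$, right-whiskering transparency and multiplicativity of $\Xi$, it collapses to the key relation $c(\vg,x)=\omega_3(\partial x,\vg)\,\omega_2(x)$, exactly $\Theta^1_A(\vg,x)$. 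The projective-homomorphism law then falls out of the vertical multiplicativity $\Omega^1(\partial x,y)\circ\Omega^1(1,x)=\Omega^1(1,yx)$ via $\Xi$ and the key relation, giving $\omega_2(yx)=\omega_3(\partial y,\partial x)\,\omega_2(y)\omega_2(x)$. For the second crossed-module compatibility I would compare this with the companion specialisation, first slot $(\vg,1)$ and second slot $(1,y)$, which gives $c(\vg,\,^{\vg}y)=\omega_3(\vg,\partial y)\,^{\omega_1(\vg)}\omega_2(y)$; equating the two expressions for $c(\vg,\,^{\vg}y)$ and simplifying with the cocycle and normalisation identities produces precisely the required formula for $\omega_2(\,^{\vg}y)$. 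This comparison is the one genuinely laborious point, being the computation in the proof of Proposition~\ref{ext_2} run in reverse, and I expect it to be the main obstacle, the difficulty lying in the cocycle bookkeeping rather than in any conceptual step.

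Having checked all the axioms, $(\omega_1,\omega_2,\omega_3)$ is a weak $\sK$-algebra, and $\Omega=\Theta_A$ follows termwise: both are unitary, $\Omega^1(\vg)=\Theta^1_A(\vg)$ and $\Omega^2_\star=\Theta^2_{A\,\star}=\mathrm{Id}$ by construction, while $\Upsilon\Xi=\mathrm{Id}$ forces $\Omega^2(\vf,\vg)=\Upsilon(\omega_3(\vf,\vg))=\Theta^2_A(\vf,\vg)$ and, through the key relation, $\Omega^1(\vg,x)=\Theta^1_A(\vg,x)$. Finally, if $\Omega$ is strict then every $\Omega^2(\vf,\vg)=\mathrm{Id}$, whence $\omega_3\equiv1$; the projective law becomes $\omega_2(xy)=\omega_2(x)\omega_2(y)$ and the second compatibility collapses to $\omega_2(\,^{\vf}x)=\,^{\omega_1(\vf)}\omega_2(x)$, so $(\omega_1,\omega_2)$ is an honest crossed-module homomorphism $\sK\to\tAut(A)$ and $A$ is a $\sK$-algebra.
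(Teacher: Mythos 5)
Your proposal is correct and takes essentially the same route as the paper's proof: both set $\omega_1=\omega$, read off $\omega_2(x)=\Upsilon^{-1}\big(\Omega^1(1,x)\big)$ and $\omega_3(\vf,\vg)=\Upsilon^{-1}\big(\Omega^2(\vf,\vg)\big)$ via Lemma~\ref{nat_trans}, establish the key identity $\Omega^1(\vg,x)=\Upsilon(\omega_3(\partial x,\vg)\,\omega_2(x))$ from normalisation together with the naturality of $\Omega^2$, and then verify the five weak-homomorphism axioms, with strictness killing $\omega_3$. The only divergences are cosmetic and harmless: you derive the product law from vertical multiplicativity plus the key relation, and the conjugation law by comparing two specialisations of the naturality square, whereas the paper instead decomposes the relevant 2-morphisms of $\widetilde{\sK}$ horizontally (e.g. $1\stackrel{\,^\vf x}{\Longrightarrow}\vf\,\partial x\,\vf^{-1}$ as a $\diamond$-composite); both variants reduce to the same cocycle bookkeeping, which does close as you claim.
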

\begin{proof}
  The first component of a weak homomorphism
$(\omega_1,\omega_2,\omega_3) : \widetilde{\sK}\rightarrow \tAut (A)$
is already defined: $\omega_1 (\vg) \coloneqq \varpi (\vg)$.
Let us define the other two components using Lemma~\ref{nat_trans}: 
$$
\omega_2 (x) \coloneqq
\Upsilon^{-1} \Big( \Omega^1_{\star,\star} (1_G\stackrel{x}{\Longrightarrow}\partial(x)) \Big)
, \ \ \
\omega_3 (\vf,\vg) \coloneqq
\Upsilon^{-1} \Big( \Omega^2_{\star,\star,\star} (\vf,\vg) \Big) 
$$
for all $\vf, \vg\in G$, $x\in H$. Let us observe that
$\Omega$ can be computed from the triple $(\omega_1,\omega_2,\omega_3)$
by formulas for $\Theta_A$ in Proposition~\ref{ext_2}.
The only formula that requires checking is the following:
$$
\Omega^1_{\star,\star} (\vg, x) = 
\Omega^1_{\star,\star} (
      [1\stackrel{x}{\Longrightarrow} \partial (x)]
      \diamond
[\vg\stackrel{1}{\Longrightarrow} \vg]
)=
\Omega^2_{\star,\star,\star} (\partial x, \vg) \circ (
\Omega^1_{\star,\star} (1,x)
\diamond
\Omega^1_{\star,\star} (\vg,1) )
=
\Upsilon (\omega_3(\partial x, \vg) \omega_2 (x)).
$$
We have used the fact that
$\Omega^1_{\star,\star} (\vg,1)$
is identity, that follows from vertical multiplicativity.

It remains to verify that $(\omega_1,\omega_2,\omega_3)$ is a weak
homomorphism of crossed modules.
This boils down to checking the following
five identities:
\begin{enumerate}
\item{$\omega_1 (\partial x) = \partial (\omega_2 (x))$:}
\  Let $a=\omega_2 (x)$. Pick $b\in A$. Observe that
$    \Omega^1_{\star,\star} (1_G\stackrel{x}{\Longrightarrow}\partial(x))_A (b)
= ab.
$
Hence, 
$$
ab = \Omega^1_{\star,\star} (1_G\stackrel{x}{\Longrightarrow}\partial(x))_A (b\cdot 1) =
b\cdot^{[\partial x]}\Omega^1_{\star,\star} (1_G\stackrel{x}{\Longrightarrow}\partial(x))_A (1) =
\,^{\partial x}b \cdot a \; .
$$
We are done since $a$ is inevitably invertible.
\item{$\omega_2 (\,^\vf x)=
\ \ldots \ 
\,^{\omega_1(\vf)} \omega_2 (x)$:}
  This follows from the definition of a 2-functor:
\begin{align*}
\omega_2 (\,^\vf x)=
\Omega^1_{\star,\star} (1 \stackrel{\,^\vf x\;}{\Longrightarrow} \vf \partial x \, \vf^{-1} )
=
&
\; 
\Omega^1_{\star,\star} \Big(
\xymatrix{
\star
   \ar@/^2pc/[rr]_{\quad}^{\vf}="1"
   \ar@/_2pc/[rr]_{\vf}="2"
&& \star
   \ar@/^2pc/[rr]_{\quad}^{\partial x\, \vf^{-1}}="3"
   \ar@/_2pc/[rr]_{\vf^{-1}}="4"
\ar@{}"1";"2"|(.2){\,}="7"
\ar@{}"1";"2"|(.8){\,}="8"
\ar@{=>}"8" ;"7"^{1}
&& \star
\ar@{}"4";"3"|(.2){\,}="7"
\ar@{}"4";"3"|(.8){\,}="6"
\ar@{=>}"7" ;"6"^{x}
}
\Big)
=
\\
=
\Omega^2_{\star,\star,\star}(\vf, \partial x\, \vf^{-1})
\circ
(
\Omega^1_{\star,\star}(\vf, 1)
\diamond
\Omega^1_{\star,\star}(\vf^{-1}, x)
)
\circ
&
\Omega^2_{\star,\star,\star}(\vf, \vf^{-1})^{-1}
=
\omega_3(\vf, \partial x\, \vf^{-1})
\cdot
\\
\cdot
\,^\vf (\omega_3 (\partial x, \vf^{-1})\omega_2 (x))
\omega_3(\vf, \vf^{-1})^{-1}
=
&
\omega_3(\vf, \partial x\, \vf^{-1})
\,^\vf \omega_3 (\partial x, \vf^{-1})
\omega_3(\vf, \vf^{-1})^{-1}
\,^\vf\omega_2 (x).
\end{align*}
\item{$\omega_2 (xy) = 
\omega_3 (\partial x, \partial y)\omega_2 (x) \omega_2 (y)$:}
Again, this follows from the definition of a 2-functor:
$$
\omega_2 (xy)
=
\Omega^1
\Big(
\xymatrix{
\star
   \ar@/^2pc/[rr]_{\quad}^{\partial x}="1"
   \ar@/_2pc/[rr]_{1}="2"
&& \star
   \ar@/^2pc/[rr]_{\quad}^{\partial y}="3"
   \ar@/_2pc/[rr]_{1}="4"
\ar@{}"1";"2"|(.2){\,}="7"
\ar@{}"1";"2"|(.8){\,}="8"
\ar@{=>}"8" ;"7"^{x}
&& \star
\ar@{}"4";"3"|(.2){\,}="7"
\ar@{}"4";"3"|(.8){\,}="6"
\ar@{=>}"7" ;"6"^{y}
}
\Big)
=
\Omega^2_{\star,\star,\star} (\partial x, \partial y) \circ
( \Omega^1_{\star,\star} (1,x) \diamond \Omega^1_{\star,\star} (1,y))
$$
$$
=
\omega_3 (\partial x, \partial y) \,^{\partial x}\omega_2 (y) \omega_2 (x).
=
\omega_3 (\partial x, \partial y) \omega_2 (x) \omega_2 (y) \omega_2(x)^{-1} \omega_2 (y)
=
\omega_3 (\partial x, \partial y) \omega_2 (x) \omega_2 (y). 
$$
\item{$\,^\vf\omega_3 (\vg, \vh)\omega_3 (\vf,\vg\vh) =
  \omega_3 (\vf\vg, \vh)\omega_3 (\vf,\vg)$ :}
  This follows from commutativity of the pentagon (collapsed hexagon) diagram.
  The argument is the converse of the argument given in the proof of Proposition~\ref{ext_2}.
\item{$\omega_3 (\vg, 1) = 1 =\omega_3 (1,\vg)$: }
  \ Both identities
  follow from the 2-module being unital.
\end{enumerate}

The second statement is immediate. If $\Omega^2_{\star,\star,\star}$ are all trivial,
then $\omega_3 \equiv 1$.  Thus, $(\omega_1,\omega_2,1)$ is a homomorphism
of crossed modules.
\end{proof}

It is a moot point whether it is possible to replace
weak $\sK$-algebras with $\sK^{\ccr}$-algebras
for a new ``centrally extended'' crossed module
${\sK}^{\ccr}=(H^{\ccr}\rightarrow G)$
where
$1\rightarrow C\rightarrow{H}^{\ccr} \rightarrow H \rightarrow 1$
is a $G$-equivariant central extension.
It is certainly possible to do it for a single
weak $\sK$-algebra $A$: one can use $C=Z(A)^\times$
to achieve this goal.
However, if one wishes a single ${H}^{\ccr}$
to serve all weak $\sK$-algebras, this becomes subtly
dependent on $\sK$. To achieve this
${H}^{\ccr}$ needs to be a $G$-equivariant stem cover of $H$.
Such thing may exist, say if $H$ is perfect.
However, it does not exist, in general: Derek Holt has shown
us an example of finite $H$ and $G$ where such cover does not exist.

The unitality assumption is not onerous: we can always make
a 2-module unital as explained in the following lemma,
whose proof is routine.
As the referee has pointed out,
the second statement is a special case of the general fact
that any monoidal functor between monoidal categories
is monoidally equivalent to a unital monoidal functor.
\begin{lemma}
\label{norm}
If $R : \widetilde{\sK}\rightarrow \tsAut (A\mbox{-}\Mod)$
is a 2-module with  
$R_{\star,\star} ^1(\vg) = [\varpi (\vg)]$
for some group homomorphism
$\varpi : G \rightarrow \Aut (A)$,
then
$$
R^2(\vf,1) = R^2(1,\vg) = R^2(1,1) =\Upsilon (z)
$$
for some element $z$ of the centre $Z(A)$.
Moreover,
$R$ is equivalent to a unital 2-module 
$\Theta : \widetilde{\sK}\rightarrow \tsAut (A\mbox{-}\Mod)$
defined by
$$
\Theta^1 = R^1, \ \ 
\Theta^2_{\star} = R^2_{\star}, \ \ 
\Theta^2 (\vf, \vg) =
  \Upsilon (z^{-1}) \circ 
R^2 (\vf, \vg) \; .
$$
\end{lemma}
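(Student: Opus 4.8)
The plan is to turn every $2$-cell into a central unit via Lemma~\ref{nat_trans} and then read off the unit coherences of the $2$-functor $R$. Since the composition in $\tsAut(A\mbox{-}\Mod)$ is $F_1\diamond F_2=F_2\circ F_1$ and $\omega$ is a homomorphism, the source and target of $R^2(\vf,\vg)$ agree: $R^1(\vf)\diamond R^1(\vg)=[\omega(\vg)]\circ[\omega(\vf)]=[\omega(\vf\vg)]=R^1(\vf\vg)$. Hence $R^2(\vf,\vg)$ is a natural automorphism of $[\omega(\vf\vg)]$ and, by Lemma~\ref{nat_trans}, equals $\Upsilon(\eta(\vf,\vg))$ for a unique $\eta(\vf,\vg)\in Z(A)^\times$; likewise $R^2_\star$ is a natural automorphism of $R^1(1_G)=[\mbox{Id}_A]=\mbox{Id}$, so $R^2_\star=\Upsilon(z_0)$ for some $z_0\in Z(A)^\times$. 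This converts the coherence diagrams into multiplicative identities in $Z(A)^\times$.

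Next I would extract the two unit (triangle) coherences. Because both $\widetilde{\sK}$ and $\tsAut(A\mbox{-}\Mod)$ are strict, the unitors are identities and the square diagrams collapse to
$$
R^2(1,\vg)\circ\big(R^2_\star\diamond\mbox{Id}_{R^1(\vg)}\big)=\mbox{Id},\qquad R^2(\vf,1)\circ\big(\mbox{Id}_{R^1(\vf)}\diamond R^2_\star\big)=\mbox{Id}.
$$
The two whiskerings of $R^2_\star$ must now be computed. Post-composing $\Upsilon(z_0)$ by a twist $[\omega(\vg)]$ leaves the underlying natural transformation unchanged, because twists act as the identity on morphisms; so $R^2_\star\diamond\mbox{Id}_{R^1(\vg)}=\Upsilon(z_0)$ and therefore $R^2(1,\vg)=\Upsilon(z_0^{-1})$ for every $\vg$, independently of $\vg$. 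Setting $z\coloneqq z_0^{-1}$ gives $R^2(1,\vg)=R^2(1,1)=\Upsilon(z)$, and the pentagon cocycle identity for $\eta$ (derived exactly as in Proposition~\ref{ext_2}) confirms $\eta(1,\vg)=\eta(1,1)$ independently.

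The hard part, and the main obstacle, is the right-hand triangle: pre-composing $\Upsilon(z_0)$ by $[\omega(\vf)]$ evaluates $R^2_\star$ on the twisted module $M^{[\omega(\vf)]}$, whose action is $a\cdot m=\omega(\vf)(a)m$, so the whiskered component is multiplication by $\omega(\vf)(z_0)$ rather than $z_0$. This yields $R^2(\vf,1)=\Upsilon(\omega(\vf)(z))$, and the clean equality $R^2(\vf,1)=\Upsilon(z)$ asserted in the lemma is \emph{precisely} the statement that $z$ is $G$-invariant, $\omega(\vf)(z)=z$. I therefore expect the genuine content of the proof to be establishing this invariance; it is automatic when $Z(A)=\bK$ (so $z$ is a scalar fixed by the $\bK$-linear automorphisms $\omega(\vf)$), and more generally I would try to force it from the naturality of $R^2$ against the $H$-cells $R^1(\vg,x)$ together with the collapsed unit diagrams. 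Absent such invariance one should read $z$ as an element of $Z(A)^G$ and, if necessary, normalise the right unit by an $\omega$-twisted $1$-cochain.

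Finally, granting $z\in Z(A)^G$, I would define $\Theta$ by $\Theta^1=R^1$, $\Theta^2_\star=R^2_\star$, $\Theta^2(\vf,\vg)=\Upsilon(z^{-1})\circ R^2(\vf,\vg)$, i.e.\ replace the twisted cocycle $\eta$ by $z^{-1}\eta$. Since $z^{-1}$ is a constant central unit and $z$ is $G$-invariant, multiplying a twisted $2$-cocycle by $z^{-1}$ differs from it by the coboundary of the constant $1$-cochain $z^{-1}$, so $\Theta$ is again a $2$-module; and $\Theta^2(\vf,1)=\Upsilon(z^{-1}\,\omega(\vf)(z))=\Upsilon(1)=\mbox{Id}$, and likewise $\Theta^2(1,\vg)=\mbox{Id}$, so $\Theta$ is normalised. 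The isomorphism $R\cong\Theta$ is then witnessed by the natural $2$-transformation with $\psi^1_\star=\mbox{Id}_{A\mbox{-}\Mod}$ and $\psi^2$ built from $\Upsilon(z)$; checking the two coherence conditions of a natural $2$-transformation reduces, via Lemma~\ref{nat_trans}, to the identity $z\cdot z^{-1}=1$ in $Z(A)^\times$, which is immediate.
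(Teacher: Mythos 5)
The paper offers no proof to compare against---it declares the lemma ``routine''---so your proposal must stand on its own, and it does so rather well: your reduction of every 2-cell to a central unit via Lemma~\ref{nat_trans}, your identification of $R^2(\vf,\vg)=\Upsilon(\eta(\vf,\vg))$ with $\eta$ a twisted 2-cocycle, and your whiskering computations are all correct in the paper's conventions (the paper itself confirms $[\vf]\diamond\Upsilon(x)=\Upsilon(\,^\vf x)$ just before Proposition~\ref{ext_2}). In particular you are right that the unit squares give $R^2(1,\vg)=\Upsilon(z)$ independently of $\vg$ but $R^2(\vf,1)=\Upsilon(\omega(\vf)(z))$, so the printed chain of equalities is exactly the assertion $z\in Z(A)^G$. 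Your suspicion that this might be forced by naturality cannot be realised: take $H$ trivial, $A=\bK\times\bK$, $G=\bZ/2$ acting by the swap, $\lambda:G\rightarrow Z(A)^\times$ any 1-cochain with $\lambda(1)=(a,b)$, $a\neq b$, and set $\eta\coloneqq d_\omega\lambda$, $R^2(\vf,\vg)\coloneqq\Upsilon(\eta(\vf,\vg))$, $R^2_\star\coloneqq\Upsilon(\lambda(1)^{-1})$. All coherences hold (the hexagon is the twisted cocycle identity, the squares follow since $\eta(1,\vg)=\eta(1,1)=\lambda(1)$ and $\eta(\vf,1)=\,^\vf\lambda(1)$ automatically), yet $R^2(\vf,1)\neq R^2(1,\vg)$. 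So the lemma as stated is false in this generality; it is correct when $Z(A)=\bK$ (the case of a linear 2-representation), as you note, and your fallback---normalising by the $\omega$-twisted coboundary of the constant 1-cochain, i.e.\ $\Theta^2(\vf,\vg)\coloneqq\Upsilon\bigl((\,^\vf z)^{-1}\bigr)\circ R^2(\vf,\vg)$---is precisely the repair that the corollaries downstream (\ref{morita_dec}, \ref{morita_dec_1}) need, since there $Z(A)=\bK^n$ with $G$ permuting factors and invariance of $z$ is not guaranteed.

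One wrinkle you inherit from the statement rather than resolve: the lemma keeps $\Theta^2_\star=R^2_\star$, but the paper's own square coherences force a normalised 2-functor to be unitary---from $\Theta^2(\vf,1)=\mbox{Id}$ the square gives $\mbox{Id}_{\Theta^1(\vf)}\diamond\Theta^2_\star=\mbox{Id}$, hence $\Theta^2_\star=\mbox{Id}$. So the repaired $\Theta$ should be taken unitary as well, and your final isomorphism check silently requires this: writing $\psi^2(\vg)=\Upsilon(c(\vg))$, the first coherence condition at $\va=\vb=1$ forces $c(1)=z$, while the second coherence with $\Theta^2_\star=R^2_\star=\Upsilon(z^{-1})$ forces $c(1)=1$, a contradiction for $z\neq 1$; with $\Theta^2_\star=\mbox{Id}$ both conditions are satisfied by the constant choice $c(\vg)=z$, even without invariance of $z$ once $\Theta^2$ is twisted as above. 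In short: your computation is sound and more careful than the paper, the invariance obstruction you flagged is genuine (witnessed by the counterexample), and the correct statement is that $R$ is isomorphic to the \emph{unitary} normalised 2-module with $\Theta^1=R^1$, $\Theta^2_\star=\mbox{Id}$, $\Theta^2(\vf,\vg)=\Upsilon\bigl((\,^\vf z)^{-1}\bigr)\circ R^2(\vf,\vg)$.
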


For the rest of the section we concentrate on $\sK$-algebras.
Our goal is Morita Theory that will pinpoint when
two $\sK$-algebras give equivalent 2-modules.


We consider a Morita equivalence
$\Phi : A\mbox{-}\Mod \rightarrow B\mbox{-}\Mod$
together with its quasiinverse
$\Phi^{-1} : B\mbox{-}\Mod \rightarrow A\mbox{-}\Mod$.
We suppose both are equipped with strong module structures.
This data defines a homomorphism of 2-groups
$$
\widetilde{\Phi} :
\tsAut  (A\mbox{-}\Mod ) \rightarrow 
\tsAut  (B\mbox{-}\Mod ) , \ \  
\widetilde{\Phi}^0 (\star)= \star , \ \ 
\widetilde{\Phi}^1_{\star,\star} (F) = \Phi\circ F\circ\Phi^{-1} ,  
$$
$$
\widetilde{\Phi}^1_{\star,\star} (
\xymatrix{
\star
   \ar@/^2pc/[rr]^{F_2}="2"
   \ar@/_2pc/[rr]_{\quad}_{F_1}="1"
&& \star
\ar@{}"1";"2"|(.2){\,}="7"
\ar@{}"1";"2"|(.8){\,}="8"
\ar@{=>}"7" ;"8"^{\varphi}
} 
) = 
\xymatrix{
\star
   \ar@/^2pc/[rr]^{\Phi^{-1}}="2"
   \ar@/_2pc/[rr]_{\quad}_{\Phi^{-1}}="1"
&& \star
   \ar@/^2pc/[rr]_{\quad}^{F_2}="3"
   \ar@/_2pc/[rr]_{F_1}="4"
\ar@{}"1";"2"|(.2){\,}="7"
\ar@{}"1";"2"|(.8){\,}="8"
\ar@{=>}"7" ;"8"^{Id}
&& \star
   \ar@/^2pc/[rr]_{\quad}^{\Phi}="5"
   \ar@/_2pc/[rr]_{\Phi}="6"
\ar@{}"4";"3"|(.2){\,}="9"
\ar@{}"4";"3"|(.8){\,}="10"
\ar@{=>}"9" ;"10"^{\varphi}
&& \star
\ar@{}"5";"6"|(.2){\,}="11"
\ar@{}"5";"6"|(.8){\,}="12"
\ar@{=>}"12" ;"11"^{Id}
} 
.$$
The 2-isomorphism $\widetilde{\Phi}^2_{\star}$ and
the compatibilities $\widetilde{\Phi}^2_{\star,\star,\star}$
utilise the natural isomorphism between $\Phi^{-1} \circ \Phi$
(or $\Phi \circ \Phi^{-1}$) and the corresponding identity functors.
Thus, $\widetilde{\Phi}$ is neither strict, nor unital, in general.
We say that two $\sK$-algebras $A$ and $B$ are 
{\em 
$\sK$-Morita equivalent} 
if there exist quasiinverse Morita equivalences
$\Phi$ and $\Phi^{-1}$
with some choice of a strong module structure 
such that the homomorphisms of 2-groups
$\widetilde{\Phi} \circ \Theta_A, \Theta_B:
\widetilde{\sK}\rightarrow\tsAut (B\mbox{-}\Mod)$ 
are equivalent. 

Recall that a Morita context $(A,B, \,_AM_B, \,_BN_A, \alpha, \beta)$ 
is {\em nondegenerate}
if $\alpha$ and $\beta$ are isomorphisms.
We say the Morita context
is {\em $\sK$-equivariant}
if  $A$ and $B$ are $\sK$-algebras and
\begin{mylist}
\item[(1)] both $M$ and $N$ are $G$-modules,
\item[(2)] the bimodule actions
$A\otimes_\bK M \otimes_\bK B \rightarrow M$
and
$B\otimes_\bK N \otimes_\bK A \rightarrow N$
are homomorphisms of $G$-modules,
\item[(3)] the bimodule maps
$\alpha : M\otimes_BN \rightarrow A$ and $\beta : N\otimes_AM \rightarrow B$ 
are homomorphisms of $G$-modules,
\item[(4)] $\,^{\partial(h)}m= \omega_A(h) m \omega_B(h)^{-1}$ and 
$\,^{\partial(h)}n= \omega_B(h)n \omega_{A}(h)^{-1}$ 
for all $h\in H$, $n\in N$, $m\in M$.
\end{mylist}
Observe that axiom (2) manifests in the identities
$\,^\vg(a m b) = \,^ \vg a \,^\vg  m \,^\vg b$.
The following theorem characterises $\sK$-Morita equivalences 
within the context of Morita theory.

\begin{theorem}
\label{G_morita}
The following statements about associative $\sK$-algebras
$A$ and $B$ are equivalent:
\begin{mylist}
\item[(1)] The 2-modules $\Theta_A$ and $\Theta_B$ for
$\sK$ are equivalent. 
\item[(2)] $A$ and $B$ are 
$\sK$-Morita equivalent.
\item[(3)] There exists a nondegenerate $\sK$-equivariant 
Morita context $(A,B, \,_AM_B, \,_BN_A, \alpha, \beta)$.
\end{mylist}
\end{theorem}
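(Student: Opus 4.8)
The plan is to prove the two equivalences (1)$\Leftrightarrow$(2) and (2)$\Leftrightarrow$(3) separately, the first being essentially a matter of unwinding definitions and the second being the genuinely equivariant Morita statement. For (1)$\Leftrightarrow$(2) I would read off the data of a natural 2-transformation $\psi:\Theta_A\Rightarrow\Theta_B$, viewed as 2-functors into $\tMd$ (each factoring through a one-object $\tsAut$). Its component $\psi^1_\star$ is an $\sA^1$-module functor $A\mbox{-}\Mod\to B\mbox{-}\Mod$, and demanding that $\psi$ be a 2-isomorphism forces $\psi^1_\star$ to be an invertible 1-morphism of $\tMd$, i.e. a Morita equivalence $\Phi$ with a chosen strong module structure. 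Writing $[\vg]_A$, $[\vg]_B$ for the twist autoequivalences and recalling that $\diamond$ in $\tsAut$ is reverse composition, the second component $\psi^2_{\star,\star}(\vg)$ becomes a natural isomorphism $\Phi\circ[\vg]_A\Rightarrow[\vg]_B\circ\Phi$; whiskering on the right by $\Phi^{-1}$ and using $\Phi\Phi^{-1}\cong\mathrm{Id}$ turns this into precisely a natural 2-isomorphism $\widetilde{\Phi}\circ\Theta_A\cong\Theta_B$. The two coherence axioms for $\psi$ transport termwise into those for this 2-isomorphism, so (1) and (2) record literally the same information.

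The substance is (2)$\Leftrightarrow$(3). For (3)$\Rightarrow$(2) I would set $\Phi:=\,_BN_A\otimes_A-$ with quasiinverse $\,_AM_B\otimes_B-$, each carrying the canonical strong module structure built from associativity of $\otimes$ (as recorded for bimodule functors in Section~\ref{s1}); nondegeneracy of $\alpha,\beta$ makes these quasiinverse Morita equivalences. The $G$-action on $N$ supplies, for each $\vg$, a semilinear automorphism ${}^{\vg}(-)$ of $N$, and axiom (2) is exactly the statement (up to the standard twist conventions in the definition of $[\vg]$) that ${}^{\vg}(-)$ is a morphism between the relevant twisted $B$--$A$-bimodules; via the dictionary of Lemma~\ref{nat_trans} this is the component $\psi^2(\vg)$ of a candidate 2-transformation $\widetilde{\Phi}\circ\Theta_A\Rightarrow\Theta_B$. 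Here the hypothesis that $A,B$ are honest $\sK$-algebras enters crucially: the cocycles $\omega_3$ vanish, so the multiplicativity coherence $\psi_{\va,\vb}=\psi_\va\circ\psi_\vb$ reduces to $\rho_\vf\rho_\vg=\rho_{\vf\vg}$, which holds because $N$ is a genuine $G$-module; and axiom (4) is precisely what is needed for $\psi^2$ to be compatible with the 2-cells $\Theta^1_A(\vg,x)=\Upsilon(\omega_2(x))$, i.e. the second, $H$-level coherence condition. Checking these two coherence diagrams, and that $\psi^2$ is a module-natural transformation, is the main computation and the chief obstacle.

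For (2)$\Rightarrow$(3) I would run the same dictionary backwards. The Morita equivalence $\Phi$ together with the 2-isomorphism $\widetilde{\Phi}\circ\Theta_A\cong\Theta_B$ yields, by classical Morita theory (Eilenberg--Watts), an invertible bimodule $\,_BN_A\cong\Phi(A)$, its dual $\,_AM_B$, and nondegenerate $\alpha,\beta$ (axiom (3) aside from equivariance). The components $\psi^2(\vg)$ translate into semilinear automorphisms $\rho_\vg$ of $N$; the multiplicativity coherence, again using $\omega_3=1$, upgrades $\{\rho_\vg\}$ to a genuine $G$-action (axiom (1)), with axiom (2) automatic since each $\rho_\vg$ is a bimodule map up to the prescribed twists. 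Transporting along $\alpha,\beta$ (equivalently, dualising) equips $M$ with a compatible $G$-action and renders $\alpha,\beta$ equivariant (axiom (3)), while unwinding the $H$-level coherence through $\Theta^1(\vg,x)=\Upsilon(\omega_2(x))$ produces axiom (4). I expect the two delicate points to be the same throughout: verifying that the extracted $G$-action is genuinely multiplicative rather than merely projective (which is exactly where ``$\sK$-algebra'' rather than ``weak $\sK$-algebra'' is used), and reconciling the chosen strong module structure on $\Phi$ with the tensor-product structure so that $N$ is an honest $G$-equivariant bimodule.
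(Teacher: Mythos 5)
Your proposal is correct in substance, and its core constructions coincide with the paper's: in the direction from (3) you tensor with the bimodules and take $\psi^2(\vg)\colon n\otimes p\mapsto {}^{\vg}n\otimes p$, exactly as the paper does in its (3)$\Rightarrow$(1) step, and your (1)$\Leftrightarrow$(2) dictionary (whiskering by $\Phi^{-1}$ and composing with the canonical 2-isomorphism $\widetilde{\Phi}\circ\Theta_A\cong\Theta_A$) is the paper's (1)$\Rightarrow$(2) argument. The genuine divergence lies in the decomposition and in (2)$\Rightarrow$(3). The paper proves the single cycle (1)$\Rightarrow$(2)$\Rightarrow$(3)$\Rightarrow$(1), and for (2)$\Rightarrow$(3) it outsources the entire group-level work to \cite[Theorem 2.3]{GRR11}: a $G$-Morita equivalence is given by a $G$-equivariant Morita context, which delivers axioms (1)--(3) for free, so the paper only has to verify axiom (4), which it does by transporting the identity $\partial(h)_A=\Theta^1_{A\,\star,\star}(1_G\stackrel{h}{\Longrightarrow}\partial(h))_A\circ R(\omega_A(h^{-1}))$ along $\Phi$ to $N=\Phi(A)$. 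You instead re-derive the group-level rectification by hand (Eilenberg--Watts plus extraction of semilinear maps $\rho_\vg$ from the coherence cells), which makes the proof self-contained but essentially re-proves the cited theorem; the delicate point you flag --- that the extracted action is strictly multiplicative rather than projective --- is precisely what that citation absorbs. One caution on that point: it is strictness of $\Theta_A$ and $\Theta_B$ (i.e.\ $\omega_3=1$), not of $\widetilde{\Phi}$, that collapses the coherence; the paper notes $\widetilde{\Phi}$ is neither strict nor unital, so if you extract $\rho_\vg$ from the 2-isomorphism $\widetilde{\Phi}\circ\Theta_A\cong\Theta_B$ rather than first passing, via your own (1)$\Leftrightarrow$(2) dictionary, to a natural 2-isomorphism $\psi\colon\Theta_A\Rightarrow\Theta_B$ between strict 2-functors, you must cancel the compatibility cells of $\widetilde{\Phi}$ arising from $\Phi\Phi^{-1}\cong\mathrm{Id}$ before multiplicativity is literal. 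Your derivation of axiom (4) from naturality of $\psi^2$ against the 2-cells $\Upsilon(\omega_2(x))$ matches the naturality square the paper checks in its (3)$\Rightarrow$(1) computation and is an acceptable substitute for the paper's transport argument.
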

\begin{proof}
{\em (1)$\Rightarrow$(2):} Let $\psi : \Theta_A \Rightarrow \Theta_B$
be a natural 2-isomorphism, $\phi : \Theta_B \Rightarrow \Theta_A$
its quasiinverse. Then 
$\psi^1_\star : A\mbox{-}\Mod\rightarrow B\mbox{-}\Mod$
and
$\phi^1_\star : B\mbox{-}\Mod \rightarrow A\mbox{-}\Mod$
are quasiinverse category equivalences.
Let $\Phi\coloneqq\psi_\star$,  $\Phi^{-1}\coloneqq\phi_\star$.
We have three 2-functors and two 2-isomorphisms between them:
$$
\widetilde{\Phi} \circ \Theta_A \cong \Theta_A \cong  \Theta_B \; : \; 
\widetilde{\sK}\rightarrow\tMd .
$$
Hence, $\widetilde{\Phi} \circ \Theta_A$ and 
$\Theta_B$ are equivalent 
as 2-functors 
$\widetilde{\sK}\rightarrow\tMd$.
When we consider $\tsAut(B\mbox{-}\Mod)$
as a 2-subcategory of $\tMd$,
it contains all invertible 1-objects and 2-objects,
related to the 0-object  $B\mbox{-}\Mod$.
Consequently, $\widetilde{\Phi} \circ \Theta_A$ and 
$\Theta_B$ are equivalent 
as 2-functors 
$\widetilde{\sK}\rightarrow\tsAut(B\mbox{-}\Mod)$,
that is, 
$A$ and $B$ are
$\sK$-Morita equivalent.

{\em (3)$\Rightarrow$(1):}
A nondegenerate $\sK$-equivariant context 
$(A,B, \,_AM_B, \,_BN_A, \alpha, \beta)$
gives quasiinverse 2-morphisms 
$\phi:\Theta_A \rightarrow\Theta_B$ and $\psi=\phi^{-1}$
of 2-modules. We define
$$
\phi^1_\star : 
A\mbox{-}\Mod \rightarrow B\mbox{-}\Mod, \ 
\phi^1_\star (P) = N\otimes_AP, \ \ 
\psi^1_\star : B\mbox{-}\Mod \rightarrow A\mbox{-}\Mod, \ 
\psi^1_\star (T) = M\otimes_B T.
$$
Let us now discuss 
in detail
the natural transformation
$$
\phi^2_{\star,\star} : 
\Theta^1_{A\;\star,\star} \diamond \phi^1_\star 
\Rightarrow 
\phi^1_\star  \diamond \Theta^1_{B\;\star,\star}
\ \mbox{ where } \ 
\Theta^1_{A\;\star,\star} \diamond \phi^1_\star , \;
\phi^1_\star  \diamond \Theta^1_{B\;\star,\star}
:
\widetilde{\sK}_1 (\star,\star)\rightarrow \tMd_1
(A\mbox{-}\Mod,B\mbox{-}\Mod)
$$
and
$\psi^2_{\star,\star}$ is defined similarly.
The objects of the category
$\widetilde{\sK}_1 (\star,\star)$
is the group $G$. Thus, the natural transformation 
$\phi^2_{\star,\star}$
is defined 
for all $A$-modules $P$ and $\vg\in G$:
$$
\phi^2_{\star,\star} (\vg)_P: 
N\otimes_A (P^{[\vg]})
\rightarrow
(N\otimes_A P)^{[\vg]}, \ \ \ 
n\otimes p \mapsto \,^{\vg}n \otimes p.
$$
Let us verify that it is
a homomorphism of $B$-modules: 
$$
\phi^2_{\star,\star} (\vg)_P (bn\otimes p) =
\,^{\vg}(bn)\otimes p =
\,^{\vg}b\; \,^{\vg}n\otimes p =
b\cdot^{[\vg]} (\,^{\vg}n\otimes p) = 
b\cdot^{[\vg]}  \phi^2_{\star,\star} (\vg)_P (n\otimes p) 
$$
for all $b\in B$, $n\in N$, $p\in P$. The fact that
$\phi^2_{\star,\star}$
is  a natural transformation is equivalent to commutativity of the
following diagrams
$$
\begin{CD}
N\otimes_A P^{[\vf]} @>{\phi^2_{\star,\star} (\vf)_P}>>     (N\otimes_A P)^{[\vf]} \\
@A{{\mathrm Id}_N\otimes h}AA              @A{h}AA \\
N\otimes_A P^{[\vg]} @>{\phi^2_{\star,\star} (\vg)_P}>> (N\otimes_A P)^{[\vg]}
\end{CD}
\ \ 
\mbox{ for each 2-morphism }
\ \ 
\xymatrix{
\star
   \ar@/^2pc/[rr]_{\quad}^{\vf}="2"
   \ar@/_2pc/[rr]_{\vg}="1"
&& \star
\ar@{}"1";"2"|(.2){\,}="7"
\ar@{}"1";"2"|(.8){\,}="8"
\ar@{=>}"7" ;"8"^{h}
}
\  
. 
$$
An element $n\otimes p\in N\otimes_A P^{[\vg]}$ is mapped via the
bottom-right corner to
$$
n\otimes p
\mapsto
\,^{\vg}n\otimes p
\mapsto
\omega_B(h)\,^{\vg}n\otimes p
$$
and via the
top-left corner to
$$
n\otimes p
\mapsto
n\otimes \omega_A(h)\cdot p
\mapsto
\,^{\vf}n\otimes \omega_A(h)\cdot p
=
\,^{\partial (h)}(\,^{\vg}n) \omega_A(h) \otimes p
$$
that are equal by axiom (4) of an $\sK$-equivariant Morita context.
There are still two compatibility conditions to verify.
The second compatibility condition follows easily from the fact that 
$\phi^2_{\star,\star} (1_G)_P$ is the identity operator. 
The first compatibility condition is checked for $\va,\vb\in G$:
$$
\phi_{\va,P} :
(N\otimes_A P^{[\va]})^{[\vb]}
\rightarrow
(N\otimes_A P)^{[\va][\vb]}, \ \ 
n\otimes p
\mapsto
\,^{\va}n\otimes p
\ ,
$$
$$
\phi_{\vb,P} :
N\otimes_A P^{[\va][\vb]}
\rightarrow
(N\otimes_A P^{[\va]})^{[\vb]}, \ \ 
n\otimes p
\mapsto
\,^{\vb}n\otimes p
\ ,
$$
$$
\phi_{\va,\vb,P} :
N\otimes_A P^{[\va][\vb]}
\rightarrow
(N\otimes_A P)^{[\va][\vb]}, \ \ 
n\otimes p
\mapsto
\,^{(\va\vb)}n\otimes p.$$
Now the first compatibility boils down to the 
left action axiom 
$\,^{(\va\vb)}n = \,^{\va}(\,^{\vb}n)$.

We can finish the proof at this point because
$\phi$ and $\psi$ are
quasiinverse 2-morphisms.

{\em (2)$\Rightarrow$(3):}
A $\sK$-Morita equivalence is a $G$-Morita equivalence.
In particular, it is given (up to natural transformations)
by a $G$-equivariant Morita context
$(A,B, \,_AM_B, \,_BN_A, \alpha, \beta)$
\cite[Theorem 2.3]{GRR11}, i.e., a Morita context satisfying
axioms
(1)--(3) of the definition of a $\sK$-equivariant Morita context.

It remains to establish axiom (4). An analogous property
$\,^{\partial (h)}a=\omega_A(h)a \omega_A(h)^{-1}$ holds for $a\in A$,
$h\in H$.
Let us explain briefly how this property gives axiom (4) for $N=\Phi
(A)$,
leaving technical verifications to an inquisitive reader. The
$\sK$-Morita equivalence $\Phi$ gives an equivalence of 2-modules
$\Theta_A$ and $\Theta_B$. Let us now express the property as an
identity on some structure morphisms $A\rightarrow A^{[\partial (h)]}$: 
$$
\partial (h)_A
\; = \;
\Theta^1_{A\, \star,\star} (1_G\stackrel{h}{\Longrightarrow}\partial (h))_A 
\circ 
R({\omega_A({h}^{-1}}))
$$
where $\partial (h)_A:A\rightarrow A^{[\partial (h)]}$ is the group
action
and  
$R(\omega_A({h}^{-1})):A\rightarrow A$ is the $A$-module endomorphism
given by the right multiplication.
The
$\sK$-Morita equivalence $\Phi$ preserves this identity: the equality
$$
\partial (h)_N
\; = \;
\Theta^1_{N\, \star,\star} (1_G\stackrel{h}{\Longrightarrow}\partial (h))_N 
\circ 
R({\omega_A({h}^{-1}})) \in \hom (N,N^{[\partial (h)]})
$$
holds and it is exactly axiom (4).
\end{proof}

\begin{center}
\section{Structure of 2-representations}
\label{s2a}
\end{center}

There is a critical difference between a 2-module and a 2-representation
of $\widetilde{\sK}$. 
An action of $G$ on $A\mbox{-}\Mod$ for an arbitrary associative
algebra $A$ could be hard to pinpoint: thanks to Theorem~\ref{G_morita}
we need to compute the group of invertible $A$-$A$-bimodules, so called
the non-commutative Picard group of $A$.
However, if $A$ is semisimple, these bimodules could be tracked
modulo {\em realizability} of cocycles.
Let $Z=\bK^n$ with some action of $G$.
We call a cocycle $\mu\in Z^2(G,Z^\times)$
{\em realizable} if there exists a projective $Z$-semilinear
representation
of $G$ on a finite dimensional faithful $Z$-module $M$ with cocycle
$\mu$:
$$
\rho: G \rightarrow \GL (\,_{\bK}M), \
\rho(\vg) (zm) = (\vg\cdot z)  \rho(\vg) (m), \
\rho (\vf\vg) = \mu(\vf,\vg)\rho (\vf) \rho (\vg)
\ \mbox{ for all } \ 
\vf,\vg\in G, z\in Z, m\in M. 
$$
If $G$ is finite, all cocycles are realizable
because projective representations are representations
of a finite dimensional twisted skew group algebra $Z_{\mu} G$
(where $z\vg \cdot z^\prime \vh = z \,^\vg{z^\prime} \mu^{-1}(\vg , \vh) \vg\vh$). 
If $G$ is infinite, it is no longer the case.
Suppose $G$ is simple, $\mu$ is non-trivial.
If $\mu$ is realizable, then the action of $G$ on
$A\coloneqq \End_Z M$ is non-trivial, hence faithful.
But $G$ could be of higher cardinality than $\bK$
or non-linear. In both case,
non-trivial cocycles are not realizable.

Let $e_1, \ldots , e_n$ be all primitive idempotents in $Z$,
$G_i\leq G$ -- the stabiliser of $e_i$.
We decompose the cocycle
into components $\mu=(\mu_i) = \sum_i \mu_i e_i$,
the components $\mu_i$ are cocycles for $G_i$, not the full $G$.
Let us collect basic facts about realizability:
\begin{lemma}
  \label{realize}
  \begin{enumerate}
  \item
    $\mu$ is realizable if and only if
  all $\mu_i|_{G_i^2}$
  are realizable.
\item If $\mu\sim\nu$, then
  $\mu$ is realizable if and only if
      $\nu$ is realizable.
\item Realizable cocycles form a subgroup
  $Z^2_{{\mathrm {\tiny r}}{\mathrm {\tiny e}}}  (G,Z^\times)\leq Z^2 (G,Z^\times)$.
  \end{enumerate}
  \end{lemma}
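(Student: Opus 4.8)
The plan is to record the structural picture once and then prove the three items in the order (2), (3), (1), since (2) feeds directly into (1). The key structural observations are as follows. Because $G$ acts on $Z=\bK^n$ by $\bK$-algebra automorphisms, it permutes the primitive idempotents $e_1,\dots,e_n$; write $\sigma_\vg$ for this permutation, so that $\,^\vg e_i=e_{\sigma_\vg(i)}$ and $G_i=\{\vg\mid\sigma_\vg(i)=i\}$. Any $Z$-module $M$ splits as $M=\bigoplus_i M_i$ with $M_i=e_iM$, and faithfulness over $Z$ is equivalent to $M_i\neq 0$ for every $i$. A realization $\rho$ is forced to be semilinear, $\rho(\vg)(zm)=\,^\vg z\,\rho(\vg)(m)$, hence each $\rho(\vg)$ restricts to an isomorphism $M_i\xrightarrow{\cong}M_{\sigma_\vg(i)}$. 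In particular $\rho$ preserves the $G$-orbits on $\{1,\dots,n\}$, so $(M,\rho)$ decomposes as a direct sum indexed by these orbits, and $\mu$ is realizable if and only if the restriction of $\mu$ to each orbit-block of $Z^\times$ is realizable. Crucially, each orbit has size $[G:G_i]\le n$, so even for infinite $G$ the relevant stabilizers $G_i$ have finite index; this is what will keep induced representations finite-dimensional.

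For (2), suppose $\nu$ and $\mu$ differ by the coboundary of a $1$-cochain $\lambda:G\to Z^\times$, and let $(M,\rho)$ realize $\mu$. I would set $\rho'(\vg):=\lambda(\vg)\rho(\vg)$ on the \emph{same} module $M$. Using commutativity of $Z$ and semilinearity one checks that $\rho'$ is again semilinear and that, for the correct orientation of the coboundary, its cocycle is exactly $\nu$. Since the underlying module is unchanged, finite-dimensionality and faithfulness are automatic. This is the routine ``twist by a $1$-cochain'' argument and is the easy part.

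For (3), the trivial cocycle is realized by $M=Z$ with $\rho(\vg)=\,^\vg(-)$, a genuine faithful representation. Closure under products: if $(M,\rho)$ realizes $\mu$ and $(M',\rho')$ realizes $\nu$, then $M\otimes_Z M'$ with $\rho\otimes\rho'$ realizes $\mu\nu$, because over $Z$ the two scalar actions coincide so the cocycles multiply, while $e_i(M\otimes_Z M')=e_iM\otimes_\bK e_iM'$ is nonzero for every $i$, giving faithfulness; finite-dimensionality is clear. Closure under inverses: I would take $M^\ast=\Hom_\bK(M,\bK)$ with $(z\cdot f)(m)=f(zm)$ and contragredient action $\rho^\ast(\vg)(f)=f\circ\rho(\vg)^{-1}$. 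Semilinearity of $\rho^\ast$ holds because $\rho(\vg)^{-1}$ is semilinear over $\,^{\vg^{-1}}$, and cancelling the invertible operator $\rho(\vf\vg)^{-1}$ in the identity $\rho^\ast(\vf)\rho^\ast(\vg)(f)=f\circ(\rho(\vf)\rho(\vg))^{-1}$ forces the cocycle of $\rho^\ast$ to be $\mu^{-1}$; faithfulness follows from $e_iM^\ast\cong(M_i)^\ast$. Together with (2), which guarantees the set depends only on cohomology classes, this yields the subgroup statement.

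For (1), the orbit reduction above lets me treat a single orbit $O$ with a chosen point $i_0$ and finite-index stabilizer $G_0=G_{i_0}$. The direction $(\Rightarrow)$ is immediate: given a realization $(M,\rho)$ of $\mu$, the operators $\rho(\vg)$ with $\vg\in G_0$ preserve $M_{i_0}$ and satisfy there $\rho(\vf\vg)=\mu_{i_0}(\vf,\vg)\rho(\vf)\rho(\vg)$, so the nonzero space $M_{i_0}$ realizes $\mu_{i_0}|_{G_0^2}$. For $(\Leftarrow)$, given a nonzero finite-dimensional projective representation $V$ of $G_0$ with cocycle in the class of $\mu_{i_0}|_{G_0^2}$, I would induce along $G_0\le G$ to build a projective representation of $G$ on $M=\bigoplus_{j\in O}M_j$ with each $M_j\cong V$; concretely one can pass through the twisted skew group algebra, using that $e_{i_0}\,Z_\mu G\,e_{i_0}$ is the twisted group algebra of $G_0$ and that $e_{i_0}$ is a full idempotent of the orbit block by transitivity, so the induced module is finite-dimensional and faithful over the orbit part of $Z$. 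The main obstacle is purely bookkeeping: the inductive construction produces a cocycle only \emph{cohomologous} to $\mu|_O$, so I would invoke (2) to correct it to $\mu|_O$ on the nose; and the step where finiteness of the orbit (equivalently $[G:G_0]<\infty$, which rests on $n<\infty$) is exactly what guarantees the induced representation stays finite-dimensional, the one place the finiteness hypothesis is essential.
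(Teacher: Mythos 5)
Your proof is correct, but it runs in the opposite logical order to the paper's and uses a different induction step in part (1). The paper proves (1) first: for the backward direction it passes to the single subgroup $H=\cap_i G_i$, which has finite index because there are only finitely many idempotents, realizes each $\mu_i|_{H^2}$ on some $V_i$, observes that $V=\oplus_i V_i$ realizes $\mu|_{H^2}$, and induces once through the twisted skew group algebra, $Z_{\mu}G\otimes_{Z_{\mu}H}V$; it then proves (2) and (3) only in the case $Z=\bK$ (by essentially the same twist, tensor-product and contragredient arguments you give) and deduces the general statements from (1). You instead prove (2) and (3) directly over arbitrary $Z$, carrying out the semilinearity checks for $\otimes_Z$ and the $\bK$-dual by hand, and then feed (2) into (1); since your (2) does not depend on (1), there is no circularity. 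Your induction in (1) also differs: rather than intersecting all stabilizers, you reduce to a single orbit and induce from the point stabilizer $G_{i_0}$ via the corner algebra $e_{i_0}Z_{\mu}G\,e_{i_0}$, identified with the twisted group algebra of $G_{i_0}$, together with fullness of $e_{i_0}$ in the orbit block --- a Morita-flavoured refinement of the same idea, both versions resting on the same finiteness $[G:G_i]\le n$. One small simplification: your closing appeal to (2) to correct the induced cocycle is unnecessary, since a module over the orbit block of $Z_{\mu}G$ realizes $\mu$ on the nose (the cocycle is built into the algebra multiplication $z\vg\cdot z^\prime\vh = z\,^{\vg}z^\prime\,\mu^{-1}(\vg,\vh)\vg\vh$), though invoking it is harmless. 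What each route buys: yours makes (2) and (3) self-contained over general $Z$ and isolates the orbit structure cleanly; the paper's concentrates all the work in a single induction from $\cap_i G_i$ and avoids choosing orbit representatives.
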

\begin{proof}
  {\bf 1.} The only if statement is obvious.

    In the opposite direction, the subgroup $H=\cap_i G_i$
    is of finite index in $G$ and all cocycles $\mu_i|_{H^2}$
    are realizable via projective representations $V_i$.
    Then $V\coloneqq \oplus_i V_i$ realizes the cocycle $\mu|_{H^2}$,
    while $Z_{\mu}G \otimes_{Z_{\mu}H}V$ realizes $\mu$ itself.

    {\bf 2.}
    This is easy to verify for $Z=\bK$. The general statement follows
    from 1.

    {\bf 3.}
    Tensor products and contragradient representations
    verify this for $Z=\bK$. The general statement follows
    from 1.
\end{proof}

It is known (albeit in a different language) that
a 2-representation of a finite group $G$ 
comes from 
a split semisimple $G$-algebra $A$ 
(cf. \cite{Ost} and \cite[Theorem 4.3]{GRR11}).
To state a general statement covering infinite groups
we need {\em a semimatrix algebra},
by which we mean a finite direct sum of $\End_\bK V$
for some, not necessarily finite dimensional vector spaces $V$.
\begin{lemma} (cf. \cite[Th. 5.5]{Elg}) 
  \label{morita_dec_group}
  The following statements hold for
a 2-representation $R$ of a group $G$.
  \begin{enumerate}
    \item
The 2-representation $R$ 
admits an associated cocycle $\mu\in Z^2(G,Z^\times)$
where $Z=\bK^n$, $n=R^0(\star)$.
\item The cohomology class $[\mu]$ is canonically associated to $R$.
  \item If $\mu$ is realizable, then
there exists
a split semisimple 
$G$-algebra $A$ such
that $R$ and $\Theta_A$ are equivalent. 
\item
In general, there exists
a semimatrix 
$G$-algebra $A$ such
that $R$ is equivalent to a sub-2-representation of
the 2-module $\Theta_A$.
  \end{enumerate}
\end{lemma}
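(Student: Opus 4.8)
The plan is to build the cocycle out of the permutation structure already uncovered for $R$, and then to realize that cocycle by an algebra.

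\emph{Parts (1) and (2).} As noted after the definition of a 2-representation, $\dim\circ R^1$ is a homomorphism $G\to\GL_n(\bN)$, and $\GL_n(\bN)$ consists of permutation matrices; thus $R$ carries a permutation action $\pi:G\to S_n$ and each $\dim R^1(\vg)$ is the permutation matrix of $\pi(\vg)$. Consequently every entry of the matrix of vector spaces $R^1(\vg)$ is either $0$ or a line, the nonzero entries sitting on the graph of $\pi(\vg)$. First I would choose a basis vector in each such line, trivializing every $R^1(\vg)$ to the standard permutation-matrix-of-lines. With these trivializations the composition 2-isomorphism $R^2(\vf,\vg):R^1(\vf)\diamond R^1(\vg)\Rightarrow R^1(\vf\vg)$ becomes, entrywise, multiplication by a nonzero scalar; collecting one scalar per row $i$ gives $\mu=(\mu_i):G\times G\to(\bK^\times)^n=Z^\times$, where $Z=\bK^n$ carries the $G$-action permuting idempotents through $\pi$. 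The pentagon (degenerate hexagon) coherence for $R^2$, read entry by entry, is precisely the twisted $2$-cocycle identity, so $\mu\in Z^2(G,Z^\times)$; this is (1). For (2), a change of the chosen bases multiplies $\mu$ by a coboundary, and more generally a natural $2$-isomorphism $R\cong R'$ forces an isomorphism of the underlying permutation actions and alters $\mu$ by a coboundary through its data $\psi^1_\star,\psi^2_{\star,\star}$; hence the class $[\mu]$ is a $2$-isomorphism invariant.

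\emph{Part (3).} Suppose $\mu$ is realizable, with $\rho:G\to\GL({}_ZM)$ a projective representation of cocycle $\mu$ on a finite-dimensional faithful $Z$-module $M$. I would set $A\coloneqq\End_Z(M)$. Since $Z=\bK^n$ and $M$ is faithful, $M=\oplus_i e_iM$ with every $e_iM\neq0$, so $A=\oplus_i\End_\bK(e_iM)$ is split semisimple with exactly $n$ simple modules $M_1,\dots,M_n$; thus $A\mbox{-}\Mod\simeq\sA^n$ is a $2$-vector space of degree $n$. Because scalars are central, $\vg\mapsto\mathrm{Ad}(\rho(\vg))$ is a genuine homomorphism $G\to\Aut(A)$, making $A$ a \emph{strict} $G$-algebra, and $M_i^{[\vg]}\cong M_{\pi(\vg)^{-1}(i)}$ since $\rho(\vg)$ intertwines the $G$-action on $Z$. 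The crucial observation is that although the action on $A$ is strict (so $\omega_3=1$ and $\Theta_A$ is a strict $2$-module into $\tsAut(A\mbox{-}\Mod)$), its image as a $2$-representation in $\tV$ is not: transporting $\Theta_A$ along the $2$-equivalence of Theorem~\ref{2vec_eq} introduces that equivalence's coherence $2$-isomorphisms, and evaluating these on the permutation functors $[\mathrm{Ad}(\rho(\vf))]$ and $[\mathrm{Ad}(\rho(\vg))]$ compares the witnesses $\rho(\vf),\rho(\vg)$ of the isomorphisms $M_i^{[\cdot]}\cong M_{\bullet}$ against $\rho(\vf\vg)$. The discrepancy is exactly $\rho(\vf)\rho(\vg)=\mu(\vf,\vg)^{\pm1}\rho(\vf\vg)$, so the cocycle of $\Theta_A$ is $\mu$ (up to the fixed sign convention) and its permutation action is $\pi$. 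By (2), $\Theta_A$ and $R$ share $(\pi,[\mu])$, hence are $2$-isomorphic.

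\emph{Part (4).} For arbitrary $\mu$ I would realize it on an infinite-dimensional faithful $Z$-module, the regular module $M\coloneqq Z_\mu G$ over the twisted skew group algebra, on which left multiplication by group elements is a projective representation of cocycle $\mu$ (replacing $\mu$ by $\mu^{-1}$ in the twisted algebra if necessary). Then $A\coloneqq\End_Z(M)=\oplus_i\End_\bK(e_iM)$ is a semimatrix algebra (the $e_iM$ now being possibly infinite-dimensional) carrying the conjugation $G$-action as before. Now $A\mbox{-}\Mod$ is no longer a $2$-vector space, so $\Theta_A$ is only a $2$-module; however the simple modules $M_1,\dots,M_n$ span a semisimple full subcategory equivalent to $\sA^n$ that is preserved by every $[\mathrm{Ad}(\rho(\vg))]$, and restricting $\Theta_A$ to it yields a genuine $2$-representation. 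The same computation as in (3) shows this restriction carries $(\pi,[\mu])$, hence is $2$-isomorphic to $R$, exhibiting $R$ as a sub-2-representation of $\Theta_A$.

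The main obstacle is the cocycle-matching in Part~(3): reconciling the fact that $A$ carries a strict action (trivial $\omega_3$) with the requirement that $\Theta_A$ realize the possibly nontrivial class $[\mu]$. The whole content sits in the coherence datum of the $2$-equivalence $\tV\simeq\tVc$ evaluated on permutation functors, where the projective operators $\rho(\vg)$ trivialize the permuted simples only up to the scalars $\mu(\vf,\vg)$; getting the conventions to line up with Part~(1) --- the direction of $\diamond$, and whether one obtains $\mu$ or $\mu^{-1}$ --- is where the care is needed. Part~(4) then requires only the extra input that \emph{any} cocycle becomes realizable once infinite-dimensional modules are allowed, together with the routine check that passing to the subcategory on $M_1,\dots,M_n$ is compatible with the $2$-module structure.
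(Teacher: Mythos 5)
Your Parts (1) and (2) are sound and essentially coincide with the paper's argument: choose a basis vector in each line sitting on the graph of the permutation, extract scalars from $R^2(\vf,\vg)$, get the cocycle identity from the degenerate pentagon, and observe that a change of basis alters $\mu$ by a coboundary. The construction of the algebra in (3)--(4) ($A=\End_Z(M)$ for a projective realization, with the twisted group algebra supplying an infinite-dimensional realization of an arbitrary cocycle, and restriction to the subcategory on the standard simples --- the paper's $A\mbox{-}\Mod^\circ$) also matches the paper.

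The genuine gap is the final step of Part (3), which Part (4) inherits: ``By (2), $\Theta_A$ and $R$ share $(\pi,[\mu])$, hence are $2$-isomorphic.'' Part (2) only establishes that the assignment $R\mapsto(\pi,[\mu])$ is well defined on $2$-isomorphism classes; it says nothing about injectivity, i.e.\ that the pair $(\pi,[\mu])$ is a \emph{complete} invariant. That completeness is precisely the nontrivial content that Parts (3)--(4) are meant to deliver (it underlies the later classification in Proposition~\ref{Osor_Clas}), so invoking it here is circular. The paper closes exactly this hole by constructing the $2$-isomorphism explicitly: it realizes $\mu^{-1}$ (not $\mu$) on a $Z$-module $S$, defines $\psi^1_{\star}(M)=\hom_Z(L,M)\otimes_Z S$ together with an explicit $\psi^2_{\star,\star}$, and then verifies the two coherence conditions for a natural $2$-transformation --- the first of which turns out to be precisely the cocycle condition for the projective $G$-module $\,_ZS$, while the second degenerates. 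Your unresolved ``$\mu$ versus $\mu^{-1}$, up to the fixed sign convention'' is a symptom of the same omission: without writing down $\psi$ and checking its first coherence condition, one cannot even determine which of $\mu,\mu^{-1}$ must be realized on $S$. To repair your proof you would either have to carry out the paper's explicit construction of $\psi$, or separately prove the completeness statement (two $2$-representations with the same permutation action and cohomologous cocycles are $2$-isomorphic) by exhibiting a natural $2$-isomorphism whose $\psi^2$ is the trivializing $1$-cochain --- which amounts to the same coherence verification.
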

\begin{proof}
Using Theorem~\ref{2vec_eq}, interpret $R$
as a 2-functor $G \rightarrow \tVc$.
Let $L_1,L_2 \ldots L_n$ be all simple non-isomorphic objects
of the category $\mC=R^0 (\star)$.
Without loss of generality, $\mC = Z\mbox{-}\Mod$
while $R$ is given by the algebraic datum
as in Theorem~\ref{2vec_eq}
(we use $\otimes\coloneqq \otimes_\bK$ as well as $\otimes_Z$):
$$
MR^1_{\star,\star} (\vg) =
\bigoplus_{i,j}
L_j \otimes \hom_Z (L_i,M) \otimes V_{i,j}(\vg), \ \ \ \ 
R^2_{\star} = (\gamma_{i}\delta_{i,j}), \ \ 
\gamma_i : \bK \xrightarrow{\cong} V_{i,i} (1).  
$$
An element $\vg\in G$ yields a permutation
$\sigma = \sigma (\vg)\in S_n$ so that
$L_i R^1(\vg) \cong L_{\sigma (i)}.$

{\bf 1.}   Let us choose bases in all one-dimensional
spaces constituting
$R^1 (\vg)$.
The choice for $R^1 (1)$ is canonical, while the rest
are not: 
$$
b_i = b_i (\vg) \in V_{i,\sigma (i)} (\vg), \ \ \ 
b_i (1) = \gamma_i (1) \in V_{i,i} (1).
$$
The collection of 2-isomorphisms $R^2 (\vf, \vg)$
yields
a 2-cocycle $\mu\in Z^2 (G,Z^\times)$
whose components
$
\mu_i (\vg,\vh)
$
(where 
$\vg,\vh\in G$, $i=\sigma (\vg\vh)(j)= \sigma (\vh)(k)$) 
are defined as a composition (notice $\End_Z L_i=\bK$): 
$$
L_i
\xrightarrow{v \mapsto v \otimes b_j (\vg) \otimes b_{k} (\vh)}
(L_jR^1(\vg))R^1(\vh) =
L_i\otimes V_{j,k} (\vg)\otimes V_{k,i} (\vh)
\xrightarrow{R^2 (\vg, \vh)}
L_jR^1(\vg \vh) =
L_i \otimes V_{j,i} (\vg\vh)
\xrightarrow{v \otimes b_j (\vg\vh)\mapsto v}
L_i .
$$
The cocycle
condition follows from the pentagon condition for $R^2_{\star,\star,\star}$. 

{\bf 2.} A different choice of elements
$b_i (\vg)$ yields a cohomologous cocycle.

{\bf 3.} 
Let us realize the cocycle $\mu^{-1}$ on a finite-dimensional
$Z$-module $S$.
If $S_i = e_iS$ is a Peirce decomposition of $S$, then
we get a split semisimple $G$-algebra
$A \coloneqq \End_Z S = \oplus_i \End_\bK S_i$.
Let $L\coloneqq \oplus_i L_i$. 
Let us define an equivalence of 2-representations
$\psi : R \rightarrow \Theta_A$:
$$
\psi^1_{\star} (M)
\coloneqq 
\hom_Z (L,M) \otimes_Z S
=
\bigoplus_{i} 
\hom_Z (L_i,M) \otimes S_{i},
$$
$$
\psi^2_{\star,\star \ M,\vg}: 
\bigoplus_{i,j}
S_j \otimes \hom_Z (L_i,M) \otimes V_{i,j}(\vg)
\rightarrow \bigoplus_{i}
S_i^{[\vg]} \otimes \hom_Z (L_i,M), \
s \otimes \phi \otimes b_i(\vg) \mapsto \,^\vg s \otimes \phi. 
$$
While the quasi-invertibility of $\psi$ is apparent, we need to check
the two coherence conditions from Section~\ref{s1}.
The components of the first coherence condition are
computed for any pair $\vf,\vg\in G$.
It is convenient to think of $V(\vg)=\oplus_{i,j}V_{i,j}(\vg)$
as a $Z-Z-$bimodule
with a generator $b(\vg) = \sum_i b_{i}(\vg)$ for this computation:
$$
\psi_{\vf,\vg}:
(\hom_Z (L,M) \otimes_Z S) \otimes_Z V(\vf) \otimes_Z V(\vg)
\rightarrow
\hom_Z (L,M) \otimes_Z S^{[\vf \vg]}, \ \ 
\phi \otimes s \otimes b(\vf) \otimes b(\vg)
\mapsto \mu(\vf,\vg)^{-1} \phi \otimes \,^{\vf\vg} s, 
$$
$$
\psi_{\vg}:
(\hom_Z (L,M) \otimes_Z S) \otimes_Z V(\vf) \otimes_Z V(\vg)
\rightarrow
(\hom_Z (L,M) \otimes_Z S^{[\vg]}) \otimes_Z V(\vf), \ \ 
\phi \otimes s \otimes b(\vf) \otimes b(\vg)
\mapsto \phi \otimes b(\vf) \otimes \,^{\vg} s, 
$$
$$
\psi_{\vf}:
(\hom_Z (L,M) \otimes_Z S^{[\vg]}) \otimes_Z V(\vf)
\rightarrow
\hom_Z (L,M) \otimes_Z S^{[\vf \vg]}, \ \ 
\phi \otimes s \otimes b(\vf)
\mapsto \phi \otimes \,^{\vf} s.  
$$
The equality
$\psi_{\vf,\vg}=
\psi_{\vf}
\circ
\psi_{\vg}
$
is precisely the cocycle condition for the projective $G$-module $\,_ZS$.
The second coherence condition degenerates since
$\vi_{\sC}$ and $\Theta^2_{A\, \star}$
are identities. 
The coherence condition becomes
$$
\psi^1_{\star} (M)
\xrightarrow{R^{1\; -1}_{\star}}
\psi^1_{\star} (R^1(1)
\diamond M) 
\xrightarrow{\psi^2_{\star,\star}(1)}
\psi^1_{\star} (M)
\diamond
\Theta^1_{A   \; \star,\star} (1)
\xrightarrow{=}
\psi^1_{\star} (M), 
$$
once applied to an object $M\in \sC$. This
is equal to the identity:
$$
\phi \otimes s
\xrightarrow{R^{1\; -1}_{\star}}
\phi \otimes s \otimes b(1)
\xrightarrow{\psi^2_{\star,\star}(1)}
\,^1 s \otimes \phi
\xrightarrow{=}
\phi \otimes s.
$$

{\bf 4.} If $\mu$ is not necessarily realizable, the argument in Part~3
still works.
The only difference is that $S$ is infinite dimensional.
This means that $S_i$ is infinite dimensional for some $i$
and the corresponding direct summand
$\End_ZS_i$
is an infinite full matrix algebra.
Consequently, the image of $\psi$ in $\Theta_A$
consists of those semisimple $A$-modules
that are direct sums of standard modules.
(Notice that the infinite full matrix algebra
has also non-standard simple modules.) 
\end{proof}

This result extends to 2-groups
using Extension Lemma~\ref{Extension}.

\begin{cor}
  \label{morita_dec}
  Consider a 2-representation $R$ of
  $\widetilde{\sK}$
  such that its restriction to $G$
  has a realizable cocycle $\mu$. 
Then there exists
a split semisimple $\sK$-algebra $A$ such
that $R$ and $\Theta_A$ are equivalent. 
Furthermore, 2-representations
$\Theta_A$ and $\Theta_B$ are equivalent 
if and only if
$A$ and $B$ are 
${\sK}$-Morita equivalent.
\end{cor}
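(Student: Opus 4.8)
The plan is to reduce the statement to the group case treated in Lemma~\ref{morita_dec_group}, and then to upgrade from a group to $\widetilde{\sK}$ by means of the Extension Lemma~\ref{Extension}; the second assertion will fall out of the Morita theorem (Theorem~\ref{G_morita}). First I would restrict $R$ to the sub-2-group $G\hookrightarrow\widetilde{\sK}$, that is, forget the 2-morphisms coming from $H$. This restriction $R|_G$ is a 2-representation of the group $G$ whose associated cocycle is the realizable $\mu$. Lemma~\ref{morita_dec_group}(3) then produces a split semisimple $G$-algebra $A_0$ together with a 2-isomorphism $\psi\colon R|_G\xrightarrow{\cong}\Theta_{A_0}$, where $\Theta_{A_0}$ is the strict unital $G$-module of Proposition~\ref{ext_2}; in particular the component $\psi^1_\star\colon R^0(\star)\to A_0\mbox{-}\Mod$ is an equivalence of $\sA^1$-module categories.

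Next I would transport the full 2-representation $R$ along $\psi$. Conjugating the data of $R$---including the 2-isomorphisms $R^1(\vg,x)$ indexed by $x\in H$---by the components of $\psi$ yields a 2-module $\Omega\colon\widetilde{\sK}\to\tsAut(A_0\mbox{-}\Mod)$ that is 2-isomorphic to $R$ over all of $\widetilde{\sK}$ and whose restriction to $G$ is $\Theta_{A_0}|_G$. Since $A_0$ is a genuine $G$-algebra, $\Theta_{A_0}$ is strict and unital; hence $\Omega$ is a strict unitary normalised 2-module whose $G$-restriction arises from the honest action $\omega\colon G\to\Aut(A_0)$. I would then feed $A_0$ and $\Omega$ into the Extension Lemma~\ref{Extension}: because $\Omega$ is strict, the lemma equips $A_0$ with a genuine $\sK$-algebra structure---call the result $A$---satisfying $\Theta_A=\Omega\cong R$. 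This establishes the first assertion, namely that $R$ is 2-isomorphic to $\Theta_A$ for a split semisimple $\sK$-algebra $A$.

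The second assertion is then immediate from Theorem~\ref{G_morita}: its equivalence (1)$\Leftrightarrow$(2), applied to the $\sK$-algebras $A$ and $B$, says precisely that $\Theta_A$ and $\Theta_B$ are 2-isomorphic as 2-modules if and only if $A$ and $B$ are $\sK$-Morita equivalent. Since $A$ and $B$ are split semisimple, $\Theta_A$ and $\Theta_B$ land in the full 2-subcategory $\tVc\subseteq\tMd$, so being 2-isomorphic as 2-representations coincides with being 2-isomorphic as 2-modules, and no further argument is needed.

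The hard part will be the transport step. I expect the routine but delicate point to be checking that conjugating $R$ by the purely $G$-level 2-isomorphism $\psi$ produces a coherent 2-module $\Omega$ whose $G$-restriction equals the strict $\Theta_{A_0}|_G$ \emph{on the nose}---rather than merely up to a further 2-isomorphism---so that the Extension Lemma may be invoked with a strict $\Omega$ and returns an honest $\sK$-algebra rather than only a weak one. This is exactly where the realizability hypothesis on $\mu$ does its work: it is what allows Lemma~\ref{morita_dec_group} to absorb the $G$-cocycle into the strict action on the finite-dimensional algebra $A_0$, so that the strictness required by the Extension Lemma is available.
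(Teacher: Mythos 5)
Your proposal is correct and follows essentially the same route as the paper's own proof: Lemma~\ref{morita_dec_group} to produce the split semisimple $G$-algebra, the Extension Lemma~\ref{Extension} (in its strict case) to upgrade the $G$-action to a $\sK$-algebra structure, and Theorem~\ref{G_morita} for the Morita statement. The only cosmetic difference is that the paper invokes Lemma~\ref{norm} to normalise the 2-module before applying the Extension Lemma, whereas you obtain the normalised (indeed strict, unitary) $\Omega$ directly from the on-the-nose transport along $\psi$ --- a point the paper leaves implicit and you rightly flag as the delicate step where realizability is used.
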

\begin{proof}
  Lemma~\ref{morita_dec_group}
  gives a split semisimple $G$-algebra $A$.
Lemma~\ref{norm} makes the 2-representation unital. 
Lemma~\ref{Extension} extends 
the $G$-algebra structure on $A$ to a $\sK$-algebra structure.
Theorem~\ref{G_morita} establishes the second statement.
\end{proof}

Let us call (weak) 2-representation of $\widetilde{\sK}$, 
described in Corollary~\ref{morita_dec},
{\em realizable}.
To deal with not necessarily realizable weak 2-representations,
we need to contemplate a semimatrix algebra
$A= \oplus_i \End_\bK S_i$
and its full subcategory $A\mbox{-}\Mod^\circ$ of $A\mbox{-}\Mod$
consisting of semisimple representations whose direct
summands are isomorphic to $S_i^{[\vg]}$ for some automorphisms $\vg$.

If $A$ is a semimatrix $\sK$-algebra,
we can describe the structure of the 2-representation $R=\Theta_A^\circ$
of $\widetilde{\sK}$
afforded by $A$ in the same way as
in the proof of 
Theorem~\ref{2vec_eq}.
Notice that $\Theta_A^\circ=\Theta_A$
for a split semisimple algebra $A$.
Let us depict it using the bicategory $\tVec^\bK$ for the benefit of the reader:
$$
R^0(\star) = n, \ \ 
R^1(\vg)_{i,j} = \hom (S_i, S_j^{[\vg]}), \ \
{R^2_\star}\,_{i,i} : \hom (S_i,S_i)\xrightarrow{\cong} \bK,
$$
$$
R^2 ({\vf,\vg}):
\bigoplus_j
R^1(\vg)_{i,j} \otimes 
R^1(\vf)_{j,k} 
\rightarrow
R^1(\vg\vf)_{i,k} , \ \ 
\sum_j \varphi_{i,j} \otimes \psi_{j,k} 
\mapsto 
\sum_j
\psi_{j,k} \circ\varphi_{i,j}
, \ \
$$
$$
R^1 ({\vg,h})
= R^1 (
\vg \stackrel{h}{\Longrightarrow} \partial h \, \vg )
: \hom (S_i, S_j^{[\vg]}) \rightarrow \hom (S_i, S_j^{[\partial (h)\vg]}), \ \ 
: \varphi \mapsto \omega_A(h) \circ \varphi.
$$

It is useful to get 
a version of Corollary~\ref{morita_dec}
for semimatrix algebras. We
need a version of Lemma~\ref{nat_trans}:
\begin{lemma}
  \label{nat_trans_1}
  Suppose $\vf$ and $\vg$ are automorphisms of a semimatrix algebra $A$,
  $[\vf]$, $[\vg]$ are twists of the category on $A\mbox{-}\Mod^\circ$.  
  Then the map (cf. Lemma~\ref{nat_trans})
  $$
  \Upsilon:
  \{x\in A \,\mid\, \forall a\in A \ \  x a=\vg(\vf^{-1}(a))x\}
  \rightarrow
  \mbox{\rm Nat.Trans} ([\vf],[\vg]), \ \
  \Upsilon (x)_M : m \mapsto x \cdot m
  $$
  is a bijection.
\end{lemma}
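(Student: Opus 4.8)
The plan is to reproduce the proof of Lemma~\ref{nat_trans} almost verbatim, changing only the test object used to recover $x$ from a natural transformation. Well-definedness of $\Upsilon$ transfers unchanged and uses nothing about semisimplicity: for $x$ in the displayed set the computation $\Upsilon(x)_M(a\cdot^{[\vf]}m)=x\vf(a)m=\vg(\vf^{-1}(\vf(a)))xm=a\cdot^{[\vg]}\Upsilon(x)_M(m)$ shows each $\Upsilon(x)_M$ is a morphism $M^{[\vf]}\to M^{[\vg]}$ of $A\mbox{-}\Mod^\circ$, and naturality is immediate from $A$-linearity of the structure maps. The obstacle is the inverse map: in Lemma~\ref{nat_trans} we recovered $x$ as $\varphi_{\,_AA}(1)$, using that $\,_AA$ generates $A\mbox{-}\Mod$. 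When some $S_i$ is infinite-dimensional the regular module $\,_AA$ is not semisimple, so $\,_AA\notin A\mbox{-}\Mod^\circ$ and this evaluation is no longer available and must be replaced.

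I would replace $\,_AA$ by $S\coloneqq\oplus_i S_i$, which does lie in $A\mbox{-}\Mod^\circ$, is faithful, and satisfies $A=\End_Z S$ for $Z=\bK^n$ the span of the block idempotents $e_1,\dots,e_n$. Faithfulness gives injectivity immediately: $\Upsilon(x)_S=\Upsilon(x')_S$ forces $xs=x's$ for all $s\in S$, hence $x=x'$. For surjectivity, given $\varphi$ I put $x\coloneqq\varphi_S$, regarded as a $\bK$-linear operator on $S$; the relation $\varphi_S(as)=\vg(\vf^{-1}(a))\varphi_S(s)$ holds for all $a\in A$, $s\in S$ because $\varphi_S$ is a morphism $S^{[\vf]}\to S^{[\vg]}$, so $x$ will satisfy the intertwining condition as soon as we know $x\in A$.

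The crux is precisely that $x=\varphi_S$ lies in $\End_Z S=A$, i.e. commutes with each $e_k$. Naturality with respect to the summand inclusion $S_k\hookrightarrow S$ identifies $\varphi_S|_{S_k}$ with the component $\varphi_{S_k}\colon S_k^{[\vf]}\to S_k^{[\vg]}$; these are simple modules supported on the blocks $\sigma_\vf^{-1}(k)$ and $\sigma_\vg^{-1}(k)$ (writing $\vf(e_i)=e_{\sigma_\vf(i)}$), hence non-isomorphic, so $\varphi_{S_k}=0$, unless $\vg(\vf^{-1}(e_k))=e_k$. Thus $\varphi_S$ either annihilates $S_k$ or maps it into itself, so it preserves the block decomposition and $x\in A$. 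To conclude $\Upsilon(x)=\varphi$ I would use that $A\mbox{-}\Mod^\circ$ is semisimple and that a natural transformation of additive functors decomposes over direct sums, reducing the check to simple objects; since $\Upsilon(x)$ and $\varphi$ agree on $S$, hence on each $S_m$, and every simple $S_i^{[\vu]}$ is isomorphic to an untwisted $S_m$ (twisting a standard module by an automorphism yields a standard module, since automorphisms preserve the minimal two-sided ideal of each block), naturality along such an isomorphism propagates the agreement to all simples. The main obstacle is this last stretch of infinite-dimensional bookkeeping—verifying $x\in A$ and that twisting preserves standardness—both automatic once every $S_i$ is finite-dimensional but needing the socle argument in general.
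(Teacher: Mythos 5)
Your proof is correct and follows essentially the same route as the paper: the paper's inverse map is $\Xi^\circ(\varphi)=\sum_i \varphi_{S_i}$, which is exactly your $x=\varphi_S$ for $S=\oplus_i S_i$ after your block-preservation observation, so both arguments replace evaluation at the regular module (unavailable in $A\mbox{-}\Mod^\circ$) by evaluation at the standard simples. The paper states this inverse without further verification, so your additional checks --- that $x\in A$, that $x$ satisfies the intertwining relation, and that agreement at the $S_i$ propagates to all objects because a twist of a standard module is again standard --- are precisely the details the paper leaves to the reader, supplied correctly.
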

\begin{proof} Thanks to Lemma~\ref{nat_trans}, we just need to construct
an inverse map $\Xi^\circ$.
Given a natural transformation $\varphi$, its value
$\varphi_i \coloneqq \varphi_{S_i}$
is a linear map $S_i\rightarrow S_i$.
Hence, $\varphi_i \in \End_\bK S_i \subseteq A$
and the inverse map is 
  $$
\Xi^\circ:
 \mbox{\rm Nat.Trans} ([\vf],[\vg])
  \rightarrow
   \{x\in A \,\mid\, \forall a\in A \ \  x a=\vg(\vf^{-1}(a))x\}, \ \ 
  \Xi^\circ (\varphi) = \sum_i \varphi_i .
  $$  
\end{proof}

Now we can prove a version of
Corollary~\ref{morita_dec}
for non-realizable cocycles.
It lacks uniqueness statement.

\begin{cor}
  \label{morita_dec_1}
For a 2-representation $R$ of
  $\widetilde{\sK}$
there exists
a semimatrix $\sK$-algebra $A$ such
that $R$ and $\Theta_A^\circ$ are equivalent. 
In particular, any 2-representation is equivalent to a strict 2-representation.
\end{cor}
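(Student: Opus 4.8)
The plan is to reproduce the proof of Corollary~\ref{morita_dec}, substituting the semimatrix versions of each ingredient. First I would restrict $R$ to the sub-2-group $G$ and apply part~4 of Lemma~\ref{morita_dec_group}. Realizing the (possibly non-realizable) cocycle $\mu^{-1}$ on an infinite-dimensional $Z$-module $S$ produces a semimatrix $G$-algebra $A = \oplus_i \End_\bK S_i$ together with a 2-isomorphism identifying $R|_G$ with the 2-representation $\Theta_A^\circ$ afforded by $A$ on the subcategory $A\mbox{-}\Mod^\circ$. The decisive feature is that the projective representation $\rho$ realizing $\mu^{-1}$ induces a genuine conjugation action $\vg : a \mapsto \rho(\vg) a \rho(\vg)^{-1}$ on $A$, so $A$ is an honest $G$-algebra and the compositor of $\Theta_A^\circ$ is trivial; this is precisely where the infinite-dimensionality of $S$ is indispensable.

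Next I would promote this from $G$ to all of $\widetilde{\sK}$. I would normalise via Lemma~\ref{norm} and transport the full 2-representation across the 2-isomorphism, obtaining a unitary normalised 2-module $\Omega : \widetilde{\sK} \rightarrow \tsAut(A\mbox{-}\Mod^\circ)$ whose restriction to $G$ is the twist action $[\omega_1(\vg)]$; since $A$ is a genuine $G$-algebra its compositor is trivial, so $\Omega$ is moreover strict. I would then run the Extension Lemma~\ref{Extension} in the semimatrix setting. Its proof invokes only the bijection $\Upsilon$ of Lemma~\ref{nat_trans}, which is replaced here by the semimatrix bijection of Lemma~\ref{nat_trans_1}; consequently the five identities defining a crossed-module homomorphism are read off verbatim with $A\mbox{-}\Mod^\circ$ in place of $A\mbox{-}\Mod$. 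Because $\Omega$ is strict, the second assertion of Lemma~\ref{Extension} equips $A$ with an honest $\sK$-algebra structure for which $\Omega = \Theta_A^\circ$, yielding the 2-isomorphism $R \cong \Theta_A^\circ$.

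The closing sentence is then immediate: as $A$ is a genuine $\sK$-algebra we have $\omega_3 = 1$, so $\Theta^2_A(\vf,\vg) = \Upsilon(\omega_3(\vf,\vg)) = \mbox{Id}$ and $\Theta^2_{A\,\star} = \mbox{Id}$ by Proposition~\ref{ext_2}; hence $R$ is 2-isomorphic to the strict unitary 2-representation $\Theta_A^\circ$.

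The step I expect to be the main obstacle is confirming that the Extension Lemma survives the passage to the proper subcategory $A\mbox{-}\Mod^\circ$. One must check that each twist $[\vg]$ preserves $A\mbox{-}\Mod^\circ$ (clear, since $[\vg]$ merely permutes the standard modules $S_i^{[\vh]}$) and, more delicately, that Lemma~\ref{nat_trans_1} computes natural transformations of the twists on $A\mbox{-}\Mod^\circ$ itself, so that identities such as $\omega_1(\partial x) = \partial(\omega_2(x))$ can be extracted exactly as in the original argument. The attendant subtlety is guaranteeing that $A$ is honestly, rather than weakly, a $\sK$-algebra; this rests entirely on the realization step having trivialized the cocycle, the one ingredient separating the semimatrix case from a naive appeal to Proposition~\ref{ext_2}.
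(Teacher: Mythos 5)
Your proposal is correct and takes essentially the same route as the paper's own proof: Lemma~\ref{morita_dec_group} (part 4) yields the semimatrix $G$-algebra with $R\cong\Theta_A^\circ$ over $G$, Lemma~\ref{norm} normalises, and the proofs of Extension Lemma~\ref{Extension} are rerun on $A\mbox{-}\Mod^\circ$ with Lemma~\ref{nat_trans_1} in place of Lemma~\ref{nat_trans}, producing the honest $\sK$-algebra structure. The only difference is expository: you spell out why the transported 2-module is strict (the coherence conditions of the 2-isomorphism force the compositor to trivialise, since the conjugation action by the projective representation realizing $\mu^{-1}$ on the infinite-dimensional $S$ is a genuine $G$-action), a point the paper's terse proof leaves implicit.
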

\begin{proof}
  Lemma~\ref{morita_dec_group}
  yields a semimatrix $G$-algebra $A$
  so that $R\cong \Theta_A^\circ$
  as 2-representations of $G$.
Lemma~\ref{norm} makes the 2-representation unital. 
Transformations  $R^1 (\vg, x)$
and $R^2 (\vf,\vg)$
become natural transformations between
twists of $A\mbox{-}\Mod^\circ$,
hence, 
Lemma~\ref{nat_trans_1} associates
elements of $A$ to all
$R^1 (\vg, x)$  and $R^2 (\vf,\vg)$.
All the proofs in  Lemma~\ref{Extension}
work for $A\mbox{-}\Mod^\circ$ for a semimatrix algebra $A$,
yielding the required $\sK$-algebra structure on $A$.
\end{proof}

Using Corollary~\ref{morita_dec} and 
Corollary~\ref{morita_dec_1},
we can describe various constructions on
realizable 2-representations
of $\widetilde{\sK}$.
It is instructive for the reader to verify that each of the
constructions depend only on the 2-representation $R$
but not on its particular realization $R=\Theta_A$
or $R=\Theta_A^\circ$.
Tensor products
and directs sums
of arbitrary 2-representations
are defined by Barrett and Mackaay \cite{BaMa}.
Our definitions agree with theirs and
we follow their notation.

{\em A direct sum} of $\Theta_A^\circ$ and $\Theta_B^\circ$ comes from the direct
sum of algebras:
$$
\Theta_A^\circ \boxplus \Theta_B^\circ
\coloneqq
\Theta_{A\oplus B}^\circ.
$$
The $\sK$-structure $\omega_{A\oplus B}$ on the direct sum is the obvious one:
$$
\omega_{A\oplus B} (\vg) = 
(\omega_{A} (\vg), \omega_{B}(\vg)), \ \  
\omega_{A\oplus B} (h) = \omega_{A} (h) \oplus \omega_{B}(h).
$$
The degree of $\Theta_A^\circ \boxplus \Theta_B^\circ$
is the sum of the degrees. A similar description for
{\em the tensor product}: 
$$
\Theta_A \boxtimes \Theta_B
\coloneqq
\Theta_{A\otimes B}
$$
works only for split semisimple algebras. A slight modification
is required in general
$$
\Theta_A^\circ \boxtimes \Theta_B^\circ
\coloneqq
\Theta_{A \widehat{\otimes} B}^\circ, \
A \widehat{\otimes} B \coloneqq
\bigoplus_{i,j} \End_\bK (S_i\otimes T_j)
\ \mbox{ if } \ 
A = \bigoplus_{i} \End_\bK S_i , \ 
B = \bigoplus_{j} \End_\bK T_j \; .
$$
Since
$A \otimes B$ is a subalgebra of
$A \widehat{\otimes} B$, 
the  $\sK$-structures  $\omega_{A\otimes B}$
and $\omega_{A\widehat{\otimes} B}$
are given by the same obvious formula:
$$
\omega_{A\widehat{\otimes} B} (\vg) : a\otimes b \mapsto
\omega_{A} (\vg) (a) \otimes \omega_{B}(\vg)(b), \ \  
\omega_{A\widehat{\otimes}B} (h) = \omega_{A} (h) \otimes \omega_{B}(h).
$$
The degree of $\Theta_A^\circ \boxtimes \Theta_B^\circ$
is the product of the degrees. {\em The contragradient 2-representation}
of $\Theta_A$ can be defined using the opposite multiplication algebra
$A^{op}$:
$$
\Theta_A^\ast
\coloneqq 
\Theta_{A^{op}}, \ \
\omega_{A^{op}} (\vg) = 
\omega_{A} (\vg), \ \ 
\omega_{A^{op}} (h) = 
\omega_{A} (h)^{-1}. 
$$
It does not work for semimatrix algebra because
$A$ has finite columns but $A^{op}$ has finite rows.
The contragradient 2-representation can be defined using the dual
vector spaces:
$$
\Theta_A^{\circ \; \ast}
\coloneqq 
\Theta_{A^{\sharp}}, \
A^\sharp \coloneqq
\bigoplus_{i} \End_\bK S_i^\ast
\ \mbox{ if } \ 
A = \bigoplus_{i} \End_\bK S_i , \ 
\omega_{A^{\sharp}} (\vg) = 
\omega_{A} (\vg)^\ast, \ \ 
\omega_{A^{\sharp}} (h) = 
\omega_{A} (h)^{\ast} \; .
$$
The following immediate
fact will
be useful.
\begin{lemma} 
\label{Lin2}
The set of equivalence classes 
of degree one  2-representations
$\tRep_1 (\sK)$ is an abelian group
under the tensor product $\boxtimes$. 
The inverse is given by the contragradient representation. 
Realizable degree one 2-representations form a subgroup. 
\end{lemma}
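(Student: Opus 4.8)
The plan is to reduce the whole statement to the algebra level via Corollary~\ref{morita_dec_1}. Every degree $1$ $2$-representation is $2$-isomorphic to $\Theta_A^\circ$ for a degree $1$ semimatrix $\sK$-algebra, i.e. $A=\End_\bK S$ for a single (possibly infinite-dimensional) vector space $S$, and the operations $\boxtimes$, the contragradient $\ast$, and the unit $\mathbf 1:=\Theta_\bK$ all have explicit descriptions on such algebras. These stay within degree $1$: $\End_\bK S\,\widehat{\otimes}\,\End_\bK T=\End_\bK(S\otimes T)$ and $A^\sharp=\End_\bK S^\ast$ each consist of a single matrix block, and the degree of a tensor product is the product of the degrees. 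Well-definedness of all these constructions on $2$-isomorphism classes is already recorded in the paragraph preceding Lemma~\ref{Lin2}, and $\tRep_1(\sK)$ is known to be a set, so I only need the commutative-monoid axioms and the existence of inverses.

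First I would establish the commutative monoid structure. Associativity and commutativity of $\boxtimes$ follow from the associativity and symmetry of the tensor product of vector spaces: the coherence isomorphisms $(S\otimes T)\otimes U\cong S\otimes(T\otimes U)$ and $S\otimes T\cong T\otimes S$ are $\sK$-equivariant, because the $\sK$-structure on $\widehat{\otimes}$ is given by the same diagonal formula $\omega(\vg)=\omega_A(\vg)\otimes\omega_B(\vg)$, $\omega(h)=\omega_A(h)\otimes\omega_B(h)$, which commutes with the standard associator and the flip. Hence they lift to $2$-isomorphisms of the associated $2$-representations. The unit is $\mathbf 1=\Theta_\bK$ with the trivial $\sK$-structure on $\bK=\End_\bK\bK$, and the canonical identifications $S\otimes\bK\cong S\cong\bK\otimes S$ give $\Theta_A^\circ\boxtimes\mathbf 1\cong\Theta_A^\circ\cong\mathbf 1\boxtimes\Theta_A^\circ$.

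The crux, which I expect to be the main obstacle, is inverses: I claim $\Theta_A^{\circ}\boxtimes\Theta_A^{\circ\,\ast}\cong\mathbf 1$. Writing the degree $1$ datum as a projective action on $S$ with scalar cocycle $\omega_3=\mu$, the contragradient dualises this and so carries cocycle $\mu^{-1}$; therefore on $A\,\widehat{\otimes}\,A^\sharp=\End_\bK(S\otimes S^\ast)$ the diagonal $\sK$-structure has cocycle $\mu\cdot\mu^{-1}=1$. The key point is this cocycle cancellation: with $\omega_3=1$ the action of $\widetilde{\sK}$ on $W:=S\otimes S^\ast$ is a genuine (non-projective) linear action, both of $G$ and of $H$, so each twist automorphism of $\End_\bK W$ is inner, realised by a bona fide $u_\vg\in\GL(W)$. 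Feeding this into the explicit description of $\Theta_{\End_\bK W}^\circ$ from the display preceding Lemma~\ref{nat_trans_1}, the identifications $W^{[\vg]}\cong W$ induced by the $u_\vg$ turn each $\hom(W,W^{[\vg]})$ into $\bK$ and turn each composition $2$-morphism $R^2(\vf,\vg)$ into multiplication by $\omega_3(\vf,\vg)=1$; hence $\Theta_{A\widehat{\otimes}A^\sharp}^\circ\cong\mathbf 1$. The care here is to check that the tensor-dual structure genuinely trivialises $\omega_3$ and $\omega_2$ simultaneously and that these identifications are compatible, rather than merely numerically cancelling cocycles; note in particular that for infinite-dimensional $S$ one cannot invoke a coevaluation element of $W$, so the argument must run entirely through the linearised action.

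Finally, for the subgroup claim I would invoke Lemma~\ref{morita_dec_group} and Lemma~\ref{realize}. A degree $1$ $2$-representation is realizable precisely when its canonically associated class $[\mu]\in H^2(G,\bK^\times)$ is realizable (Corollary~\ref{morita_dec}). Tensoring multiplies these cocycles, $\mu_{A\widehat{\otimes}B}=\mu_A\mu_B$, the contragradient inverts, $\mu_{A^\sharp}\sim\mu_A^{-1}$, and the unit has $\mu=1$; since the realizable cocycles form a subgroup $Z^2_{\mathrm{re}}(G,\bK^\times)\le Z^2(G,\bK^\times)$ by Lemma~\ref{realize}(3), the realizable classes are closed under $\boxtimes$ and under taking inverses and contain the identity, hence form a subgroup of $\tRep_1(\sK)$.
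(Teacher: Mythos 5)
Your proposal is correct, and it supplies an argument where the paper supplies none: the authors label Lemma~\ref{Lin2} an ``immediate fact'' and give no proof, evidently regarding it as a formal consequence of the constructions ($\boxtimes$, the contragradient, $\Theta_A^\circ$, Corollary~\ref{morita_dec_1}) set out in the paragraphs just before the lemma, which are exactly the ingredients you use. The monoid axioms and the subgroup claim are indeed routine as you say (multiplicativity of the associated cocycle class from Lemma~\ref{morita_dec_group} together with Lemma~\ref{realize}(2),(3)), so the only genuinely non-formal step is the one you isolate: numerical cancellation of cocycles is not enough, and one must trivialise the entire $\widetilde{\sK}$-structure on $W=S\otimes S^\ast$ at once. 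Your sketch of that step does complete correctly: choosing any family of invertible intertwiners $b(\vg)\in\hom_A(S,S^{[\vg]})$, the operators $u_\vg\coloneqq b(\vg)\otimes(b(\vg)^{-1})^\ast\in\GL(W)$ are independent of the scalar normalisation of $b(\vg)$, hence form a genuine representation of $G$ implementing the twists of $\End_\bK W$; and since $\omega_2(h)$ and $b(\partial h)$ implement the same automorphism of $\End_\bK S$, they differ by a scalar, so $u_{\partial h}=\omega_2(h)\otimes(\omega_2(h)^{-1})^\ast$ is precisely the $H$-structure on $W$. This is what makes the identifications $\hom(W,W^{[\vg]})\cong\bK$ compatible with all the $2$-morphisms $R^1(\vg,h)$ and $R^2(\vf,\vg)$ in the explicit description preceding Lemma~\ref{nat_trans_1}, not merely with the $G$-cocycle --- the check you flagged but did not write out. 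One small caveat: for this to parse you must read the paper's formula $\omega_{A^\sharp}(h)=\omega_A(h)^\ast$ as the inverse-dual $(\omega_A(h)^{-1})^\ast$, since $h\mapsto\omega_A(h)^\ast$ on the nose is an anti-homomorphism; your phrase ``the contragradient carries cocycle $\mu^{-1}$'' implicitly makes this correction. With that reading, your proof is a faithful and usefully explicit expansion of what the authors treat as immediate, and it correctly avoids the coevaluation/Morita shortcut that would fail for infinite-dimensional $S$.
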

{\em A sub-2-representation} of $\Theta_A^\circ$ is $\Theta_B^\circ$
where $A$ is split into a direct sum of ideals
$A=B\oplus C$ such that $B$ is $G$-stable
(observe that $C$ is $G$-stable automatically).
The group $G$ acts on $B$ and the group
$H$ is mapped to $B$  by the projection to the first factor:
$\omega_B: H \xrightarrow{\omega_A} A \xrightarrow{\pi_1} B$.
A 2-representation $\Theta_A^\circ$
is {\em irreducible}
if no such non-trivial splitting exists.

Let $\sK^\prime = (H^\prime\rightarrow G^\prime)$ be a crossed submodule
of $\sK= (H\rightarrow G)$, i.e. $G^\prime$ is a subgroup of $G$
and  $H^\prime$ is a $G^\prime$-stable subgroup of $H$ 
such that $\partial (H^\prime) \subseteq G^\prime$.
Then {\em the restriction} of $\Theta_A^\circ$ is just $\Theta_A^\circ$
considered
as a 2-representation of $\sK^\prime$.

We define the adjoint functor {\em induction}
only under two special assumptions:
\begin{center}
$H^\prime =H$ 
\ \ and \ \  
$|G:G^\prime |$ is finite.
\end{center}
Let $A$ be a semimatrix $\sK^\prime$-algebra.
The finite $G$-set $G/G^\prime$ has a sheaf of algebras
$G\times_{G^\prime} A$. Its space of global sections 
$$
\widetilde{A} \coloneqq \Gamma (G/G^\prime , G\times_{G^\prime} A) 
= \oplus_{\vg\in G} (\vg, A)/ \sim
$$
is an algebra. Each $(\vg, A)$ is just the algebra $A$.
The direct sum $\oplus_{\vg\in G} (\vg, A)$ is the usual direct sum of
algebras.
The congruence $\sim$ is defined by
$$
(\vg\vh, a ) \sim (\vg, \,^{\vh} a)
\mbox{ for all }
\vg\in G, \vh\in G^\prime, a \in A.
$$
Hence, 
a left transversal $T$ to $G^\prime$ in $G$
gives a presentation 
$$
\widetilde{A} 
= \oplus_{\vt\in T} (\vt, A)
$$
as a direct sum of ideals, i.e., 
$\widetilde{A}$
is a semimatrix algebra as each $(\vt,A)$ is just $A$.
We define the structure of a $\sK$-algebra on $\widetilde{A}$:
$$
\,^{\vg} (\vt,a) \coloneqq (\vg\vt, a) = (\vt^\prime, \,^{\vg^\prime}a), \ \ 
\omega_{\widetilde{A}} (h) = \sum_{\vt\in T} (\vt, \omega_A (\,^{\vt^{-1}}h))
$$
where  $h\in H$, $\vg\in G$ and $\vg\vt=\vt^\prime \vg^\prime$ for
unique $\vt^\prime \in T$. 
Let us verify the axiom:
$$
\,^{\partial (h)} (\vt,a) = 
(\vt (\vt^{-1} \partial (h) \vt), a) = 
(\vt \partial (\,^{\vt^{-1}}h), a) = 
(\vt, \,^{\partial (\,^{\vt^{-1}}h)} a)
= 
(\vt, \omega_A(\,^{\vt^{-1}}h)\, a\,  \omega_A((\,^{\vt^{-1}}h)^{-1}))
= 
\omega_{\widetilde{A}}(h)(\vt,  a)  \omega_{\widetilde{A}}(h^{-1}).
$$
Finally, we define 
${\Theta_A^\circ}\uparrow_{\widetilde{\sK}^\prime}^{\widetilde{\sK}}
\coloneqq \Theta_{\widetilde{A}}^\circ$.
We finish this section with classification of  2-representations.
\begin{theorem}
\label{irr_rep}
Let 
$\sK = (H \rightarrow G)$
be a crossed module. 
\begin{mylist}
\item{1.} If $\Theta$ is a 2-representation
of $\widetilde{\sK}$, then there exist irreducible
2-representations $\Theta_1, \ldots \Theta_n$ such 
that
$\Theta \cong \Theta_1\boxplus  \Theta_2\boxplus \ldots \Theta_n$.
\item{2.}
The
2-representations $\Theta_1, \ldots \Theta_n$ in 1.
are unique up to a permutation and equivalence. 
\item{3.}
If $\Theta$ is an irreducible 2-representation
of  $\widetilde{\sK}$,
then there exists a subgroup of finite index $G^\prime$ such that
$\partial (H) \subseteq G^\prime \subseteq G$
and a degree one 2-representation $\Psi$
of $\widetilde{\sK^\prime}$
where 
$\sK^\prime = (H \rightarrow G^\prime )$
is a crossed submodule 
such that
$\Theta \cong \Psi\uparrow_{\widetilde{\sK}^\prime}^{\widetilde{\sK}}$
\item{4.}
The pair $(G^\prime, \Psi)$ in 3.
is unique up to conjugation by 
an element of $G$.
\end{mylist}
\end{theorem}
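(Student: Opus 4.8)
The plan is to reduce statements 1--4 to the combinatorics of the permutation action on the simple factors of a semimatrix model, which Corollary~\ref{morita_dec_1} makes available. First I would apply Corollary~\ref{morita_dec_1} to replace $\Theta$ by $\Theta_A^\circ$ for a strict semimatrix $\sK$-algebra $A=\bigoplus_{i=1}^n\End_\bK S_i$. Each automorphism $\omega_A(\vg)$ preserves the centre and hence permutes its primitive idempotents $e_1,\dots,e_n$, while $\partial(H)$ fixes every $e_i$ (inner automorphisms fix the centre), so the permutation action $\sigma\colon G\rightarrow S_n$ factors through $\pi_1(\sK)=G/\partial(H)$. Splitting $\{1,\dots,n\}$ into $\sigma(G)$-orbits $O_1,\dots,O_r$, each $B_k:=\bigoplus_{i\in O_k}e_iA$ is a $G$-stable ideal and $\omega_A(h)$ is block diagonal for this splitting (it commutes with every $e_i$), so $A=\bigoplus_k B_k$ as $\sK$-algebras and $\Theta_A^\circ\cong\Theta_{B_1}^\circ\boxplus\cdots\boxplus\Theta_{B_r}^\circ$. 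Since $G$-stable ideals of $B_k$ correspond to $\sigma(G)$-stable subsets of $O_k$, the summand $\Theta_{B_k}^\circ$ is irreducible exactly when $\sigma(G)$ acts transitively on $O_k$; this proves statement~1, the irreducible summands being indexed by the orbits.

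For statement~3 I would treat the transitive case directly. Fix the idempotent $e_1$ and set $G':=\{\vg\in G\mid\omega_A(\vg)(e_1)=e_1\}$; transitivity gives $|G:G'|=n<\infty$, and $\partial(H)\subseteq G'$, so $\sK'=(H\rightarrow G')$ is a crossed submodule with $H'=H$, and $B:=e_1A=\End_\bK S_1$ is a $G'$-stable ideal whose $\Psi:=\Theta_B^\circ$ is a linear $2$-representation of $\widetilde{\sK'}$. Choosing a left transversal $T=\{\vt_i\}$ with $\omega_A(\vt_i)(e_1)=e_i$, I would define $\Xi\colon\widetilde B\rightarrow A$ on the induced algebra by $(\vt_i,b)\mapsto\omega_A(\vt_i)(b)$, an algebra isomorphism onto $\bigoplus_i e_iA=A$. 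The $G$-equivariance of $\Xi$ is the tautology $\omega_A(\vt_j\vg')=\omega_A(\vg\vt_i)$ coming from $\vg\vt_i=\vt_j\vg'$. The one computation worth recording is $H$-equivariance: the strict crossed-module relation gives $\omega_A(\vt_i)(\omega_A(\,^{\vt_i^{-1}}h))=\omega_A(h)$, whence $\omega_A(\vt_i)(e_1\,\omega_A(\,^{\vt_i^{-1}}h))=e_i\,\omega_A(h)$, and summing over $i$ with $\sum_i e_i=1$ yields $\Xi(\omega_{\widetilde B}(h))=\omega_A(h)$. Thus $\widetilde B\cong A$ as $\sK$-algebras and $\Theta\cong\Psi\uparrow_{\widetilde{\sK'}}^{\widetilde{\sK}}$.

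Uniqueness (statements~2 and~4) I would read off the Morita dictionary. A $2$-isomorphism $\Theta_A^\circ\cong\Theta_{A'}^\circ$ supplies a module equivalence sending $S_i$ to some $S'_{\pi(i)}$, and the intertwiner $\psi^2_{\star,\star}(\vg)$ forces $\pi$ to be $\sigma$-equivariant; hence $\pi$ carries orbits to orbits, identifies the orbit-blocks in pairs, and restricts on each block to a $\sK$-Morita equivalence, so the matched irreducible summands are $2$-isomorphic by Theorem~\ref{G_morita}. This is statement~2. For statement~4, two presentations $(G',\Psi)$ and $(G'',\Psi'')$ of one irreducible have isomorphic transitive $G$-sets $G/G'\cong G/G''$, so $G''$ is $G$-conjugate to $G'$; after conjugating to $G''=G'$ the base-point factor $e_1A$ recovers $\Psi$ up to the residual action of $N_G(G')$, which is again conjugation by an element of $G$.

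I expect the crux to be exactly this equivariance step: one must check that a $2$-isomorphism induces a genuinely $\sigma$-equivariant bijection of simple factors and therefore respects the orbit-block decomposition, so that a global $2$-isomorphism restricts to the summands. The remaining ingredients---the orbit bookkeeping behind statement~1 and the single $H$-equivariance computation for $\Xi$ in statement~3---are routine once the semimatrix model of Corollary~\ref{morita_dec_1} is fixed.
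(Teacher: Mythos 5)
Your proposal is correct and follows essentially the same route as the paper's proof: reduce via Corollary~\ref{morita_dec_1} to a semimatrix model $\Theta_A^\circ$, decompose $\{1,\dots,m\}$ into $G$-orbits to get the irreducible summands, take the stabiliser $G'$ of a fixed idempotent in the transitive case, and deduce both uniqueness statements from the fact that a $2$-isomorphism induces an isomorphism of the underlying $G$-sets. Your only additions are explicit verifications the paper leaves implicit (notably the isomorphism $\Xi\colon\widetilde{B}\rightarrow A$ with its $H$-equivariance computation, and the observation that the permutation action factors through $\pi_1(\sK)$), and these checks are correct.
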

\begin{proof}
  {\bf 1.}
  Without loss of generality,  $\Theta=\Theta_A^\circ$ where
  $A=\oplus_{i=1}^m A_i$
where $A_i$ are full matrix algebras. 
Let $X_1, \ldots X_n$ be the $G$-orbits on the set $\{1,\ldots m\}$,
$B_j\coloneqq\oplus_{i\in X_j} A_i$. Then $\Theta_j\coloneqq \Theta_{B_j}^\circ$
is an irreducible 2-representation and 
$\Theta \cong \Theta_1\boxplus  \Theta_2\boxplus \ldots \Theta_n$.

{\bf 2.} Suppose 
$\Theta \cong \Theta_1\boxplus  \Theta_2\boxplus \ldots
\Theta_n
\xrightarrow{\psi \ \ \cong} 
\Theta^\prime_1\boxplus  \Theta^\prime_2\boxplus \ldots 
\Theta^\prime_{n^\prime}$. 
Then $\psi^1_{\star}: \{1,\ldots m\} \rightarrow \{1,\ldots
m^\prime\}$ is
an isomorphism of $G$-sets. $G$-orbits
on $\{1,\ldots m\}$ come from 
the direct summands
$\Theta_1, \Theta_2,  \ldots\Theta_n$,
while 
$G$-orbits
on 
$\{1,\ldots m^\prime\}$ 
come from the direct summands
$\Theta^\prime_1,  \Theta^\prime_2, \ldots 
\Theta^\prime_{n^\prime}$.
Whenever $\psi^1_{\star}$ moves the $G$-orbit corresponding to  
$\Theta_i$ to the $G$-orbit corresponding to
$\Theta^\prime_j$, the equivalence $\psi$ restricts to an
isomorphism
$\Theta_i \cong \Theta^\prime_j$.

{\bf 3.} In this case the set $\{1,\ldots m\}$ consists of a single
$G$-orbit.
Let $G^\prime$ be the stabiliser of $1$ in this set.
Then $\Psi\coloneqq \Theta_{A_1}^\circ$ is a degree one 2-representation
of $\widetilde{\sK^\prime}$
and 
$\Theta \cong \Psi\uparrow_{\widetilde{\sK}^\prime}^{\widetilde{\sK}}$

{\bf 4.} In another pair 
$(\widetilde{G}, \widetilde{\Psi})$
the group $\widetilde{G}$ is a stabiliser of some $k\in \{1,\ldots
m\}$.
Let $\vg \in G$ be such that $\,^\vg 1 = k$. Then
 $\widetilde{G} = \vg G^\prime \vg^{-1}$
and the 2-representations $\Psi$ and $\vg^{-1}\Psi\vg$ 
of $\widetilde{\sK^\prime}$
are equivalent. 
\end{proof}

\begin{center}
\section{Burnside ring of a 2-group}
\label{s3}
\end{center}

Let us consider a crossed module $\sK=(H\xrightarrow{\partial}G)$ 
such that $\pi_1 (\sK)$ is a finite group.
Let $\sS (\sK)$ be the  category of subgroups of $\pi_1 (\sK)$ 
(cf. \cite{GRR11}).
Objects of $\sS (\sK)$ are subgroups of $\pi_1 (\sK)$. 
The morphisms $\sS(P,Q)$ are conjugations $\gamma_\vx: P \rightarrow Q$, 
$\gamma_\vx (\va) = \vx\va\vx^{-1}$, $\vx\in \pi_1 (\sK)$ whenever
$\vx P\vx^{-1}\subseteq Q$, restricted to $P$.
If $\vy^{-1}\vx\neq 1_G$ is in the centraliser of $P$,
then 
$\gamma_\vx$ and $\gamma_\vy$ are the same conjugations,
but we treat them as different morphisms in $\sS(P,Q)$.
In this respect we are different from the setup,
studied by Gunnells, Rose and Rumynin \cite{GRR11},
where these are the same morphism.
The setups are not drastically different, so we can still use their
results
exercising a certain care. 

A subgroup $P\leq \pi_1 (\sK)$ has an inverse image 
$\bar{P}\coloneqq
(G\rightarrow \pi_1 (\sK))^{-1} (P)$. It gives a restricted crossed
module
 $\sK_P \coloneqq (H\xrightarrow{\partial}\bar{P})$
with  $\pi_1 (\sK_P)\cong P$. 

Let
$\Phi$ be the functor ``2-representations of degree one''.
It is a contravariant functor
from $\sS(\sK)$ to the category of abelian groups.
On objects, $\Phi (P)\coloneqq \tRep_1 (\widetilde{\sK_P})$.
Let us look at a morphism  $\gamma_\vx: P \rightarrow Q$.
By picking a lifting $\dot{\vx}\in G$,
i.e., an element with $\dot{\vx} \partial (H)=\vx$,
we get a conjugation morphism of crossed modules
$$
\gamma_{\dot{\vx}}: \sK_P \rightarrow \sK_Q, \ \ 
P\ni\vg\mapsto \dot{\vx}\vg\dot{\vx}^{-1}, \ \ 
H\ni h\mapsto \,^{\dot{\vx}}h
$$
and the corresponding homomorphism of 2-groups
$$
\gamma_{\dot{\vx}}: \widetilde{\sK_P} \rightarrow \widetilde{\sK_Q}, \ \ 
P\ni\vg\mapsto \dot{\vx}\vg\dot{\vx}^{-1}, \ \ 
\xymatrix{
\star
   \ar@/^2pc/[rr]_{\quad}^{\vg_2}="2"
   \ar@/_2pc/[rr]_{\vg_1}="1"
&& \star
\ar@{}"1";"2"|(.2){\,}="7"
\ar@{}"1";"2"|(.8){\,}="8"
\ar@{=>}"7" ;"8"^{h}
} 
\mapsto
\xymatrix{
\star
   \ar@/^2pc/[rr]_{\quad}^{\dot{\vx}\vg_2\dot{\vx}^{-1}}="2"
   \ar@/_2pc/[rr]_{\dot{\vx}\vg_1\dot{\vx}^{-1}}="1"
&& \star
\ar@{}"1";"2"|(.2){\,}="7"
\ar@{}"1";"2"|(.8){\,}="8"
\ar@{=>}"7" ;"8"^{\,^{\dot{\vx}}h}
} 
.$$

An element $h\in H$ gives another lifting
$\partial(h)\dot{\vx}\in G$ of $\vx$
and a new 2-morphism
$\gamma_{\partial(h)\dot{\vx}}: \widetilde{\sK_P} \rightarrow \widetilde{\sK_Q}$.
In essence, they differ by an inner 2-isomorphism determined by $h$.
Let us spell it out.
\begin{lemma}
  \label{pull_back}
Let $\Theta$ be a 2-representation 
of $\widetilde{\sK_Q}$.
Then the 2-representations 
$\Theta\circ\gamma_{\dot{\vx}}$
and
$\Theta\circ\gamma_{\partial(h)\dot{\vx}}$
of $\widetilde{\sK_P}$ are equivalent.
\end{lemma}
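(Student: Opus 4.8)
The plan is to reduce the statement to a property of the two pull-back 2-functors themselves and then transport it along $\Theta$. Concretely, I would first show that the 2-functors $\gamma_{\dot{\vx}},\gamma_{\partial(h)\dot{\vx}}:\widetilde{\sK_P}\rightarrow\widetilde{\sK_Q}$ are naturally 2-isomorphic, and then whisker this 2-isomorphism with $\Theta$ to obtain a natural 2-isomorphism $\Theta\circ\gamma_{\dot{\vx}}\Rightarrow\Theta\circ\gamma_{\partial(h)\dot{\vx}}$, which is exactly a 2-equivalence of the two 2-modules. The conceptual reason this works is that $\partial(h)\dot{\vx}$ and $\dot{\vx}$ differ by the inner automorphism $\vw\mapsto\partial(h)\vw\partial(h)^{-1}$ of $\widetilde{\sK_Q}$, and in a 2-group an inner automorphism by $\partial(h)$ is canonically 2-isomorphic to the identity, because $h$ is a 2-morphism $1_G\Longrightarrow\partial(h)$.

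Next I would write down the natural 2-transformation $\psi:\gamma_{\dot{\vx}}\Rightarrow\gamma_{\partial(h)\dot{\vx}}$ explicitly. Recall $\gamma_{\dot{\vx}}^1(\vg)=\dot{\vx}\vg\dot{\vx}^{-1}$ and $\gamma_{\partial(h)\dot{\vx}}^1(\vg)=\partial(h)\dot{\vx}\vg\dot{\vx}^{-1}\partial(h)^{-1}$. Since the only $0$-object is $\star$, the component $\psi^1_\star$ is a $1$-morphism of $\widetilde{\sK_Q}$, i.e. an element of $\bar{Q}$; I take $\psi^1_\star\coloneqq\partial(h)^{-1}$. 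Writing $\vw=\dot{\vx}\vg\dot{\vx}^{-1}$ one then has the equality of $1$-morphisms
$$
\gamma_{\dot{\vx}}^1(\vg)\diamond\psi^1_\star=\vw\,\partial(h)^{-1}=\partial(h)^{-1}\,\big(\partial(h)\vw\partial(h)^{-1}\big)=\psi^1_\star\diamond\gamma_{\partial(h)\dot{\vx}}^1(\vg),
$$
so the second datum may be chosen to be the identity, $\psi^2_{\star,\star}(\vg)=\mathrm{Id}$, for every $\vg\in\bar{P}$.

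It then remains to verify that this datum is a natural 2-isomorphism. Naturality of $\psi^2_{\star,\star}$ with respect to a 2-morphism $\vg_1\stackrel{h'}{\Longrightarrow}\vg_2$ of $\widetilde{\sK_P}$ reduces, after applying the horizontal composition rule for the crossed-module 2-group, to the identity $\,^{\partial(h)^{-1}\partial(h)\dot{\vx}}h'=\,^{\dot{\vx}}h'$, which is clear. Both coherence conditions for a natural 2-transformation collapse to trivialities: the maps $\gamma_{\dot{\vx}}$ and $\gamma_{\partial(h)\dot{\vx}}$ are strict and unital, being induced by homomorphisms of crossed modules, and $\psi^2=\mathrm{Id}$, so the pentagon-type condition reads $\mathrm{Id}=\mathrm{Id}\circ\mathrm{Id}$ and the unit condition reads $\mathrm{Id}=\mathrm{Id}_{\psi^1_\star}$. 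Since $\psi^1_\star=\partial(h)^{-1}$ is an invertible $1$-morphism and every $\psi^2_{\star,\star}(\vg)$ is an isomorphism, $\psi$ is a natural 2-isomorphism. Finally, whiskering $\psi$ with $\Theta$ produces a natural 2-transformation $\Theta*\psi$ with components $(\Theta*\psi)^1_\star=\Theta^1_{\star,\star}(\partial(h)^{-1})$ and $(\Theta*\psi)^2$ assembled from the compositors $\Theta^2$ and the image $\Theta^1(\psi^2)$; as $\Theta^1$ preserves quasi-invertibility and the compositors of $\Theta$ are isomorphisms, $\Theta*\psi$ is again a natural 2-isomorphism, proving that $\Theta\circ\gamma_{\dot{\vx}}$ and $\Theta\circ\gamma_{\partial(h)\dot{\vx}}$ are 2-equivalent.

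The main obstacle is bookkeeping rather than conceptual: one must be careful with the left-to-right $\diamond$ convention when pinning down $\psi^1_\star$ (the inverse $\partial(h)^{-1}$ rather than $\partial(h)$ is forced by it), and one must correctly unwind the definition of whiskering a natural 2-transformation by a possibly weak 2-functor, where the compositors $\Theta^2$ genuinely enter even though $\psi^2$ is trivial. Everything else is a direct verification that simplifies precisely because $\gamma_{\dot{\vx}}$ and $\gamma_{\partial(h)\dot{\vx}}$ are strict and unital.
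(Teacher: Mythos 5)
Your proof is correct, and it takes a genuinely different route from the paper's. The paper algebraizes the problem: it writes the 2-module as coming from a crossed-module homomorphism $\theta : \sK_Q \rightarrow \tAut(A)$ (relying on the structure theory of Section~\ref{s2}) and then checks in two lines, using $\theta(\partial(h)) = \partial(\theta(h))$ on the $G$-part and the Peiffer identity $\,^{\partial(h)}g = hgh^{-1}$ on the $H$-part, that $\theta\circ\gamma_{\partial(h)\dot{\vx}}$ is the conjugate of $\theta\circ\gamma_{\dot{\vx}}$ by the unit $\theta(h)\in A^{\times}$; the 2-equivalence then follows because conjugation by a unit is implemented by the natural isomorphism $\Upsilon(\theta(h))$ of Lemma~\ref{nat_trans}. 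You instead work entirely upstream of $\Theta$: your 2-transformation $\psi$ with $\psi^1_\star = \partial(h)^{-1}$ and identity 2-component is a valid natural 2-isomorphism $\gamma_{\dot{\vx}} \Rightarrow \gamma_{\partial(h)\dot{\vx}}$ --- the exchange computation $\vw\,\partial(h)^{-1} = \partial(h)^{-1}\bigl(\partial(h)\vw\partial(h)^{-1}\bigr)$, the naturality check $\,^{\partial(h)^{-1}\partial(h)\dot{\vx}}h' = \,^{\dot{\vx}}h'$ via the horizontal composition rule, and the collapse of the two Barrett--Mackaay coherence conditions for the strict unital $\gamma$'s are all right, and you correctly observe that the left-to-right convention for $\diamond$ forces $\partial(h)^{-1}$ rather than $\partial(h)$ once $\psi^2$ is taken to be the identity. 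What your route buys is generality and independence: it proves the claim for an arbitrary 2-functor out of $\widetilde{\sK_Q}$ into any target bicategory, with no appeal to the algebraic models of Lemma~\ref{Extension} or Corollary~\ref{morita_dec_1}, and it isolates the conceptual point that conjugation by $\partial(h)$ is naturally 2-isomorphic to the identity automorphism of the 2-group. What the paper's route buys is brevity within machinery it has already built: whiskering a natural 2-transformation by a weak 2-functor is standard, but its verification genuinely uses the hexagon and square coherences of $\Theta$, and the paper deliberately avoids developing such 2-categorical bookkeeping (it even remarks, in the proof of Theorem~\ref{G_morita}, that it has not set up terminology for manipulations of this kind), so a fully self-contained version of your argument would spell that whiskering check out. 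Reassuringly, the two proofs produce essentially the same 2-isomorphism: the 1-component $\Theta^1(\partial(h)^{-1})$ of your whiskered transformation is, in the paper's model, the twist by the inner automorphism $\partial(\theta(h))^{-1}$, which is exactly what the paper's conjugation by $\theta(h)$ implements up to $\Upsilon$.
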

\begin{proof}
By Corollary~\ref{morita_dec_1}  
the 2-representation $\Theta$
is equivalent to $\Theta_A^\circ$.
The latter comes from
a weak homomorphism of crossed modules $\theta: \sK_Q \rightarrow\tAut
(A)$. Consequently,
$$
\theta\circ\gamma_{\partial(h)\dot{\vx}} (\vg) =
\theta (\partial(h)\dot{\vx}\vg\dot{\vx}^{-1}\partial(h)^{-1}) =
\theta (\partial(h))
[\theta\circ\gamma_{\dot{\vx}} (\vg)]
\theta (\partial(h))^{-1}
=
\theta (h)
[\theta\circ\gamma_{\dot{\vx}} (\vg)]
\theta (h)^{-1}
$$
for $\vg\in P$ and
$$
\theta\circ\gamma_{\partial(h)\dot{\vx}} (g) =
\theta (\,^{\partial(h)\dot{\vx}}g) =
\theta (h\,^{\dot{\vx}}gh^{-1}) =
\theta (h)
[\theta\circ\gamma_{\dot{\vx}} (g)]
\theta (h)^{-1}
$$
for $g\in H$.
\end{proof}
Lemma~\ref{pull_back}
applies to 2-representations, hence,
$\gamma_{\dot{\vx}}$
and
$\gamma_{\partial(h)\dot{\vx}}$
determine
the same pull-back homomorphism
that allows us to define the functor $\Phi$ on morphisms: 
$$
\Phi (\gamma_{\vx}) \coloneqq
[\gamma_{\dot{\vx}}]
=
[\gamma_{\partial(h)\dot{\vx}}]:
\Phi (Q)=\tRep_1 (\sK_Q)
\rightarrow
\tRep_1 (\sK_P) = \Phi (P)
.
$$
In general, if the conjugations $\gamma_{\vx}$ and 
$\gamma_{\vy}$ are the same, 
these pull-backs can be different:
$\Theta\circ\gamma_{\dot{\vx}}$
and
$\Theta\circ\gamma_{\dot{\vy}}$
are not necessarily equivalent because the actions of 2-objects
of $\widetilde{\sK}_Q$ could be different. This necessitates our
version
of the category $\sS (\sK)$ (cf. \cite{GRR11}).

Despite this slight difference, the functor $\Phi$ still
leads to
the generalised Burnside ring 
$\BA(\sK) \coloneqq \BP_{\bA} (\pi_1 (\sK))$
with coefficients in a commutative ring
$\bA$~\cite{GRR11}.
The $\bA$-basis of $\BA(\sK)$
consists of pairs
$
\langle \Theta, P \rangle
$
where $P$ is a subgroup of $\pi_1 (\sK)$,
$\Theta$ is a degree one 2-representation
of $\widetilde{\sK_P}$.
In each $\pi_1 (\sK)$-conjugacy class of such pairs
we choose one representative because
$$
\langle \Theta,P\rangle  = \langle \Phi(\gamma_{\vx})(\Theta), \vx^{-1}P\vx \rangle 
$$
for all $\vx\in \pi_1 (\sK)$.
We can also write a pair $\langle \Theta,P\rangle$
with an arbitrary 2-representation $\Theta$ of $\widetilde{\sK_P}$ 
but they can be rewritten as linear combinations
of pairs with degree one  2-representations by the formulas
$$
\langle \Theta_1 \boxplus\Theta_2 ,P\rangle = 
\langle \Theta_1  ,P\rangle  
+
\langle \Theta_2 ,P\rangle  
\mbox{ and }
\langle {\Theta}\uparrow_{\widetilde{\sK}_Q}^{\widetilde{\sK}_P},P\rangle = 
\langle \Theta  , Q\rangle  
.
$$ 
The multiplication in $\BA(\sK)$
is $\bA$-bilinear, defined on the basis by the formula
$$
\langle \Theta,P\rangle  \cdot \langle \Omega, Q\rangle  = 
\sum_{P\vx Q\in P\backslash \pi_1 (\sK)/Q} 
\langle 
\Phi(\gamma_1: P\cap \vx Q \vx^{-1} \rightarrow P)(\Theta) \boxtimes 
\Phi(\gamma_{\vx^{-1}} : P\cap \vx Q\vx^{-1} \rightarrow Q)(b)
,
P\cap \vx Q \vx^{-1}\rangle .
$$
Theorem~\ref{irr_rep}
together with known arguments~\cite[Section 1]{GRR11}
emanates the following proposition.
\begin{prop}
\label{Groth}
Let $K_\bA(\sK)$ 
be the Grothendieck ring of 2-representations
of the 2-group $\widetilde{\sK}$ 
with coefficients in a commutative ring $\bA$, 
where the multiplication comes from the
tensor product $\boxtimes$.
The assignment \newline
$[ {\Theta}\uparrow_{\widetilde{\sK}_P}^{\widetilde{\sK}}] \mapsto \langle \Theta  , P\rangle$
extended by $\bA$-linearity
is an $\bA$-algebra isomorphism
$K_\bA(\sK) \rightarrow \BA(\sK)$. 
\end{prop}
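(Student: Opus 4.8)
The plan is to build the isomorphism on the level of bases and then verify multiplicativity via a Mackey-type formula. First I would identify the $\bA$-bases of both sides. By parts~1 and~2 of Theorem~\ref{irr_rep} every 2-representation of $\widetilde{\sK}$ decomposes as a direct sum of irreducibles, uniquely up to permutation and 2-isomorphism, so the 2-isomorphism classes of irreducible 2-representations form an $\bA$-basis of $K_\bA(\sK)$. By parts~3 and~4 each such irreducible is $\Psi\uparrow_{\widetilde{\sK^\prime}}^{\widetilde{\sK}}$ for a pair $(G^\prime,\Psi)$ with $\partial(H)\subseteq G^\prime$ of finite index and $\Psi$ linear, unique up to $G$-conjugation. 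Writing $P\coloneqq G^\prime/\partial(H)\leq\pi_1(\sK)$ so that $\bar P=G^\prime$ and $\widetilde{\sK^\prime}=\widetilde{\sK_P}$, this data is precisely a $\pi_1(\sK)$-conjugacy class of pairs $\langle\Psi,P\rangle$ with $\Psi\in\tRep_1(\widetilde{\sK_P})$, which is exactly the chosen $\bA$-basis of $\BA(\sK)$. Hence $[\Psi\uparrow_{\widetilde{\sK}_P}^{\widetilde{\sK}}]\mapsto\langle\Psi,P\rangle$ is a bijection of bases and extends to an $\bA$-module isomorphism $\Lambda\colon K_\bA(\sK)\to\BA(\sK)$.

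Next I would check that the assignment as stated (for arbitrary, not only linear, $\Theta$) is consistent with $\Lambda$. This reduces to matching the two defining relations of $\BA(\sK)$ with identities in $K_\bA(\sK)$: additivity $\langle\Theta_1\boxplus\Theta_2,P\rangle=\langle\Theta_1,P\rangle+\langle\Theta_2,P\rangle$ corresponds to $[\Theta_1\boxplus\Theta_2]=[\Theta_1]+[\Theta_2]$, and $\langle\Theta\uparrow_{\widetilde{\sK}_Q}^{\widetilde{\sK}_P},P\rangle=\langle\Theta,Q\rangle$ corresponds to transitivity of induction $(\Theta\uparrow_{\widetilde{\sK}_Q}^{\widetilde{\sK}_P})\uparrow_{\widetilde{\sK}_P}^{\widetilde{\sK}}\cong\Theta\uparrow_{\widetilde{\sK}_Q}^{\widetilde{\sK}}$, which is immediate from the sheaf-of-algebras description of induction in Section~\ref{s2a}. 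Independence of the choice of lift $\dot{\vx}$ of a conjugation is guaranteed by Lemma~\ref{pull_back}, so $\Lambda$ is genuinely well defined and the two presentations agree.

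The hard part will be multiplicativity, which amounts to a double-coset (Mackey) formula for the tensor product of two induced 2-representations. Since the product in $K_\bA(\sK)$ is $\boxtimes$ while the product in $\BA(\sK)$ is the stated double-coset sum, I must establish
\begin{equation*}
(\Psi\uparrow_{\widetilde{\sK}_P}^{\widetilde{\sK}})\boxtimes(\Omega\uparrow_{\widetilde{\sK}_Q}^{\widetilde{\sK}})
\cong
\bigoplus_{P\vx Q\in P\backslash\pi_1(\sK)/Q}
\Big(\Phi(\gamma_1)(\Psi)\boxtimes\Phi(\gamma_{\vx^{-1}})(\Omega)\Big)\uparrow_{\widetilde{\sK}_{P\cap\vx Q\vx^{-1}}}^{\widetilde{\sK}}.
\end{equation*}
I would prove this entirely at the level of semimatrix $\sK$-algebras, using the realizations from Corollary~\ref{morita_dec_1}. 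Here $\Psi\uparrow$ is $\Theta^\circ_{\widetilde A}$ with $\widetilde A=\Gamma(G/\bar P,\,G\times_{\bar P}A)$, the tensor product $\boxtimes$ is computed by $\widehat{\otimes}$, and the restriction/conjugation maps $\Phi(\gamma_{\dot{\vx}})$ act by the evident twists. The computation then reduces to decomposing the $G$-algebra $\widetilde A\,\widehat{\otimes}\,\widetilde B$ along the $G$-orbits of $(G/\bar P)\times(G/\bar Q)$; by orbit--stabiliser these orbits are indexed by the double cosets $\bar P\backslash G/\bar Q$ with stabiliser $\bar P\cap\vx\bar Q\vx^{-1}$, and each orbit summand is an induced algebra $\widetilde{C_\vx}$ with $C_\vx$ the $\widehat{\otimes}$ of the two conjugated restrictions. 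Tracking the $H$-action and the cocycle data $\omega_3$ through this decomposition, and invoking Lemma~\ref{pull_back} to see that each summand depends only on the double coset and not on the chosen lift, yields the displayed formula and hence $\Lambda(\xi\cdot\eta)=\Lambda(\xi)\Lambda(\eta)$ on basis elements. This orbit decomposition of $\widetilde A\,\widehat{\otimes}\,\widetilde B$, with the equivariant bookkeeping of the crossed-module structure, is the one genuinely laborious step; it is the 2-categorical incarnation of the classical tensor-induction Mackey formula and runs parallel to the argument in \cite[Section 1]{GRR11}.
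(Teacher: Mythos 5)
Your proposal is correct and takes essentially the route the paper intends: the paper's entire proof is the citation of Theorem~\ref{irr_rep} (giving your basis bijection between induced linear pairs $\langle\Psi,P\rangle$ and irreducibles) combined with the ``known arguments'' of \cite[Section 1]{GRR11}, which are exactly the Mackey double-coset decomposition you establish for $(\Psi\uparrow)\boxtimes(\Omega\uparrow)$. You simply make explicit---via Corollary~\ref{morita_dec_1}, the $\widehat{\otimes}$ construction on semimatrix $\sK$-algebras, and Lemma~\ref{pull_back} for independence of lifts---the equivariant bookkeeping that the paper delegates to that citation.
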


Let us introduce 
{\em a mark homomorphism}
\cite[Lemma 1.2]{GRR11}. 
We need to assume that the order $|\pi_1(\sK)|$
is invertible in $\bA$.
Let $\alpha : \Phi (P) \rightarrow \bA^\times$
be a group homomorphism. The corresponding mark is 
an $\bA$-algebra homomorphism
$f_P^\alpha : \BA(\sK) \rightarrow \bA$ given by the formula
$$
f_P^\alpha (\langle \Theta, Q \rangle)
=
\frac{1}{|Q|}\sum_{\vg\in X}
\alpha (
\Phi(\gamma_{\vg} : P \rightarrow Q)(\Theta) 
)$$
where $X=\{\vg\in \pi_1 (\sK) \mid \vg P\vg^{-1}\subseteq Q \}$. 
The marks work magnificently for the ring $\BA (\sK)$ if
all the groups $\Phi  (P)$ are finite
\cite[Corollary 1.3]{GRR11}:
\begin{prop}
\label{BPsK}
Suppose all $\Phi  (P)$ are finite.
Let $N$ be the least common multiple of all the orders of
elements in various  
$\Phi  (P)$. 
If $\bA$ 
is a field, 
containing a primitive $N$-th root of unity, 
then the mark homomorphisms
define an isomorphism of $\bA$-algebras 
$$
\BA (\sK) 
\xrightarrow{\cong}
\oplus \bA = \bA^k
$$
where $k$ is the number of $G$-orbits
on the disjoint unions
$\dot{\cup}_{P} \Phi(P)$.
\end{prop}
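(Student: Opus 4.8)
The plan is to follow the classical theory of marks for Burnside rings, exactly as in \cite[Corollary 1.3]{GRR11}, and to check that the modified morphism set of $\sS(\sK)$ does not disturb that argument. First I would record that $\BA(\sK)$ is a free $\bA$-module whose rank is $k$. By construction its basis is indexed by $\pi_1(\sK)$-conjugacy classes of pairs $(P,\Theta)$ with $\Theta\in\Phi(P)$, and by Lemma~\ref{pull_back} the action on the second coordinate factors through $\pi_1(\sK)$; hence these classes are precisely the $G$-orbits on $\dot{\cup}_P\Phi(P)$, of which there are $k$. By Lemma~\ref{Lin2} each $\Phi(P)=\tRep_1(\widetilde{\sK_P})$ is a finite abelian group, and its exponent divides $N$, so every character $\alpha:\Phi(P)\to\bA^\times$ takes values in the $N$-th roots of unity, all of which lie in $\bA$ by hypothesis.

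Next I would assemble the marks into a single map $F:\BA(\sK)\to\bA^k$, where the target carries componentwise multiplication. Each $f_P^\alpha$ is an $\bA$-algebra homomorphism, so $F$ will be one once I count the inequivalent marks. Two marks coincide exactly when the pairs $(P,\alpha)$ are $G$-conjugate, so the number of marks is the number of $G$-orbits on $\dot{\cup}_P\widehat{\Phi(P)}$, where $\widehat{\Phi(P)}=\Hom(\Phi(P),\bA^\times)$. Since $\Phi(P)$ is finite abelian and $\bA$ contains enough roots of unity, Brauer's permutation lemma shows that the stabiliser $N_{\pi_1(\sK)}(P)$ has the same number of orbits on $\Phi(P)$ as on $\widehat{\Phi(P)}$; summing over conjugacy classes of subgroups gives that the number of marks equals $k$. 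To prove $F$ is an isomorphism it then suffices to show that the square matrix $\big(f_P^\alpha(\langle\Theta,Q\rangle)\big)$, with rows indexed by marks and columns by basis elements, is invertible over $\bA$.

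Finally I would establish this invertibility by the usual triangularity argument. Order the subgroups by subconjugacy. The defining sum for $f_P^\alpha(\langle\Theta,Q\rangle)$ runs over $X=\{\vg\mid \vg P\vg^{-1}\subseteq Q\}$, which is empty unless $P$ is subconjugate to $Q$, so the matrix is block triangular for this order. On a diagonal block $P$ and $Q$ are conjugate, so I may take $Q=P$, and finiteness forces $X=N_{\pi_1(\sK)}(P)$; the entry becomes $\frac{1}{|P|}\sum_{\vg\in N_{\pi_1(\sK)}(P)}\alpha\big(\Phi(\gamma_\vg)(\Theta)\big)$. Up to the invertible scalar $1/|P|$ this is the pairing between $N_{\pi_1(\sK)}(P)$-orbit sums of characters and orbit sums of elements of the finite abelian group $\Phi(P)$, and orthogonality of characters (valid since $|\Phi(P)|$ is invertible in $\bA$ and all needed roots of unity are present) makes this orbit pairing nondegenerate, so each diagonal block is invertible. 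Hence the whole matrix is invertible and $F$ is an isomorphism. The main obstacle I anticipate is precisely this diagonal block: one must check that the $\pi_1(\sK)$-action through the maps $\Phi(\gamma_\vg)$ — which, unlike in \cite{GRR11}, need not be trivial on inner elements (the subtlety flagged in the definition of $\sS(\sK)$) — still organises $\Phi(P)$ into orbits for which averaged character orthogonality applies, and this is where the care demanded by our modified category is genuinely used.
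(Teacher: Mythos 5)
Your proposal is correct and takes essentially the same route as the paper, which proves this statement simply by citing \cite[Corollary 1.3]{GRR11}: the standard table-of-marks argument (rank count over the $k$ orbits, Brauer's permutation lemma to match characters with elements, block triangularity along subconjugacy, and equivariant character orthogonality for the diagonal blocks, all valid since a primitive $N$-th root of unity forces $\mathrm{char}\,\bA \nmid |\Phi(P)|$ and the standing assumption makes $|\pi_1(\sK)|$ invertible). Your closing concern is precisely the ``certain care'' the paper flags about its modified category $\sS(\sK)$, and it is resolved exactly as you suggest: by Lemma~\ref{pull_back} the pull-backs define a genuine action of $N_{\pi_1(\sK)}(P)$ by automorphisms of the finite abelian group $\Phi(P)$ (even though centralising elements may act nontrivially), so the orbit-counting and orthogonality steps of \cite{GRR11} apply verbatim.
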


If $\varphi:\sK^\prime \rightarrow \sK$ is a homomorphism of crossed modules,
the pull-back of 2-representations gives
a homomorphism of Burnside rings
$\varphi^\ast :\BA (\sK)\rightarrow\BA (\sK^\prime)$.
Consider the quotient homomorphism of crossed modules
$$
\varphi: \sK \rightarrow \bar{\sK}\coloneqq(1\rightarrow \pi_1 (\sK)), \ \ 
G\ni\vg\mapsto \vg\partial(H), \ \ 
H\ni h\mapsto 1.
$$
The Burnside ring
$\BA (\bar{\sK})$
is precisely the generalised Burnside ring of $\pi_1(\sK)$
studied by Gunnells, Rose and Rumynin \cite{GRR11},
because there are no non-trivial 2-objects in 
$\widetilde{\bar{\sK}}$.
All the groups $\Phi (P) = H^2 (P,\bK^\times )$ are finite.
Proposition~\ref{BPsK}
tells us that 
if $\bA$ 
is a field, 
containing a primitive $N$-th root of unity, 
then 
the corresponding pull-back 
algebra homomorphism
$\varphi^\ast : \BA (\bar{\sK}) = \bA^k \rightarrow \BA (\sK)$
is injective. Its image 
can be thought of as 
the Grothendieck ring of 2-representations
``trivial'' on $H$. 

\begin{center}
\section{Ganter-Kapranov 2-character}
\label{s4}
\end{center}

Let us recall the notion of a 2-categorical trace \cite{GK08}.
Let $\sC$ be a bicategory, $x\in\sC_0$ its 0-object.
{\em The 2-categorical trace} of a 1-morphism $u\in\sC_1(x,x)$
is the set $\Tr_x(u) \coloneqq \sC_2(\vi_x,u)$. It is instructive to
observe
that in the bicategory of 2-vector spaces $\tVec^\bK$
a 1-morphism $u=(U_{i,j})$ is 
an $n\times n$-matrix 
of vector spaces, while
its trace is the vector space
$$
\Tr_n(u) = \bigoplus_i \hom_\bK (\bK, U_{i,i}) \oplus
\bigoplus_{i\neq j} \hom_\bK (0, U_{i,j})
\cong
\bigoplus_i U_{i,i}
\; . $$
Let $\Theta$ be a 2-representation of degree $n$ of
the 2-group $\widetilde{\sK}$.
Pick two elements $\va,\vb \in G$ such that
their images in the fundamental group commute:
$
\bar{\va} \bar{\vb} = \bar{\vb} \bar{\va} \in
\pi_1(\sK)
$ 
and $h \in H$ such that 
$\partial(h) \va\vb\va^{-1} = \vb$. This data gives a linear operator
$
\bX_\Theta(\vb,\va,h):
\Tr_n (\Theta^1(\vb)) \rightarrow
\Tr_n (\Theta^1(\vb))$
$$
\bX_\Theta(\vb,\va,h)
\Big (
\xymatrix{
n
\ar@/^1pc/[rr]^{\Theta(\vb)}="1"
\ar@/_1pc/[rr]_{\vi_{n}}="2"
\ar@{}"1";"2"|(.2){\,}="7"
\ar@{}"1";"2"|(.8){\,}="8"
&& n
\ar@{<=}"7" ;"8"^{v}
} 
\Big )
\; = \; 
\xymatrix{
n
\ar@/^1pc/[rr]^{\Theta(\va)}="1"
\ar@/_1pc/[rr]_{\Theta(\va)}="2"
\ar@{}"1";"2"|(.2){\,}="7"
\ar@{}"1";"2"|(.8){\,}="8"
\ar@{<=}"7" ;"8"^{Id}
\ar@/^5pc/[rrrrrr]^{\Theta(\vb)}="1"
\ar@/_5pc/[rrrrrr]_{\vi_{n}}="2"
&& n
\ar@/^1pc/[rr]^{\Theta(\vb)}="3"
\ar@/_1pc/[rr]_{\vi_{n}}="4"
\ar@{}"3";"4"|(.2){\,}="7"
\ar@{}"3";"4"|(.8){\,}="8"
\ar@{<=}"7" ;"8"^{v}
\ar@{}"1";"3"|(.2){\,}="7"
\ar@{}"1";"3"|(.8){\,}="8"
\ar@{<=}"7" ;"8"^{[\va,\vb,\va^{-1},h]}
\ar@{}"4";"2"|(.2){\,}="7"
\ar@{}"4";"2"|(.8){\,}="8"
\ar@{<=}"7" ;"8"^{[\va,1,\va^{-1},1]^{-1}}
&& n
\ar@/^1pc/[rr]^{\Theta(\va^{-1})}="1"
\ar@/_1pc/[rr]_{\Theta(\va^{-1})}="2"
\ar@{}"1";"2"|(.2){\,}="7"
\ar@{}"1";"2"|(.8){\,}="8"
\ar@{<=}"7" ;"8"^{Id}
&& n
} 
$$
where $\Theta (\vb)= \Theta^1_{\star,\star}(\vb)$
and
$[\va,\vb,\va^{-1},h]$
is a composition of the natural morphism
$\Theta (\va)\diamond\Theta (\vb)\diamond\Theta (\va^{-1})
\rightarrow
\Theta (\va\vb\va^{-1})$
and the action $h\cdot : \Theta (\va\vb\va^{-1})
\rightarrow \Theta (\vb)$.
Let us write a matrix of vector spaces  
$\Theta^1_{\star,\star}(\va) = (U_{i,j} (\va))$.
Its dimension is the permutation matrix of some permutation
$\sigma$, e.g., $U_{i,j} (\va) \neq 0$ if and only if $j=\sigma (i)$.
Now the natural map 
$\vi_n \rightarrow \Theta (\va)\diamond\Theta (\va^{-1})$
is given by a collection of elements $x_i(\va)\in U_{i,\sigma (i)} (\va)$,
$y_i(\va)\in U_{\sigma(i),i} (\va^{-1})$ in a way that
$$
\bK = \vi_{n\; i,i} \ni 1_i \mapsto x_i(\va)\otimes y_i(\va) \in 
U_{i,\sigma (i)} (\va) \otimes U_{\sigma(i),i} (\va^{-1}).
$$
Now we can write the key map in an elementary way:
$$
\bX_\Theta(\vb,\va,h) (\sum_i b_i) =
h \cdot (\sum_i x_i(\va) \otimes b_{\sigma (i)}\otimes y_i(\va) )
\ \mbox{ where } \ 
\sum_i b_i \in 
\Tr_n (\Theta^1_{\star.\star}(\vb)) = \oplus_i U_{i,i} (\vb).
$$
Its trace is  
the 2-character value 
$$
\mX_\Theta (\vb,\va,h)\coloneqq
\mathrm{Tr} (\bX_\Theta(\vb,\va,h)).
$$
Let $\bG$ be the set of all triples
$(\va,\vb,h)\in G\times G\times H$
such that
$\partial(h) \va\vb = \vb\va$.
The group $G$ acts on the set $\bG$ by conjugation.
\begin{prop}
\label{tensor}
For any 2-representation $\Theta$
the function $\mX_\Theta:\bG\rightarrow\bK$
is constant on $G$-orbits.
If $\Psi$ is another 2-representation,
then
$$
\mX_{\Psi\boxtimes\Theta}(\vb,\va,h)\coloneqq
\mX_\Psi (\vb,\va,h) \cdot \mX_\Theta(\vb,\va,h).
$$
\end{prop}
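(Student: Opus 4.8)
The plan is to prove both assertions directly from the explicit matrix formula
$$
\bX_\Theta(\vb,\va,h)\Big(\sum_i b_i\Big) = h\cdot\Big(\sum_i x_i(\va)\otimes b_{\sigma(i)}\otimes y_i(\va)\Big)
$$
for the trace operator on $\Tr_n(\Theta^1(\vb)) = \bigoplus_i U_{i,i}(\vb)$, together with the naturality of this construction under the compatibility $2$-isomorphisms $\Theta^2$.

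For constancy on $G$-orbits, I would first check that for $\vg\in G$ the conjugated triple $(\vg\vb\vg^{-1},\vg\va\vg^{-1},\,^\vg h)$ again lies in $\bG$, which is immediate from the crossed-module identity $\partial(\,^\vg h)=\vg\,\partial(h)\,\vg^{-1}$. The element $\vg$ is itself a $1$-morphism of $\widetilde{\sK}$, so the compatibilities $\Theta^2(\vg,-)$ and $\Theta^2(-,\vg^{-1})$ assemble into an invertible intertwiner
$$
c_\vg:\Tr_n(\Theta^1(\vb))\xrightarrow{\ \cong\ }\Tr_n(\Theta^1(\vg\vb\vg^{-1})),
$$
obtained by inserting $\Theta(\vg)$ on the left and $\Theta(\vg^{-1})$ on the right of an element of $\Tr_n(\Theta^1(\vb))$ and collapsing the result to $\Theta^1(\vg\vb\vg^{-1})$ via $\Theta^2$. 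The heart of this part is the intertwining identity
$$
c_\vg\circ\bX_\Theta(\vb,\va,h)=\bX_\Theta(\vg\vb\vg^{-1},\vg\va\vg^{-1},\,^\vg h)\circ c_\vg,
$$
after which $\mathrm{Tr}(c_\vg\, P\, c_\vg^{-1})=\mathrm{Tr}(P)$ gives $\mX_\Theta(\vb,\va,h)=\mX_\Theta(\vg\vb\vg^{-1},\vg\va\vg^{-1},\,^\vg h)$, which is exactly the desired invariance.

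For the multiplicativity under $\boxtimes$, I would read off the tensor product from its Barrett--Mackaay description \cite{BaMa}, in which every piece of structure is the tensor product of the corresponding pieces of the factors. Indexing the simple objects of $\Psi\boxtimes\Theta$ by pairs $(i,k)$, one has $(\Psi\boxtimes\Theta)^1(\vg)_{(i,k),(j,l)}=\Psi^1(\vg)_{i,j}\otimes\Theta^1(\vg)_{k,l}$, so the trace space factors as $\Tr_m(\Psi^1(\vb))\otimes\Tr_n(\Theta^1(\vb))$; the insertion data satisfy $x_{(i,k)}(\va)=x_i(\va)\otimes x_k(\va)$ and likewise for $y$; the compatibilities factor as $(\Psi\boxtimes\Theta)^2=\Psi^2\otimes\Theta^2$; and, using $\Theta_A^\circ\boxtimes\Theta_B^\circ=\Theta_{A\widehat\otimes B}^\circ$, the action of $h$ is $\omega_{A\widehat\otimes B}(h)=\omega_A(h)\otimes\omega_B(h)$. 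Substituting into the explicit formula yields $\bX_{\Psi\boxtimes\Theta}(\vb,\va,h)=\bX_\Psi(\vb,\va,h)\otimes\bX_\Theta(\vb,\va,h)$, whence $\mathrm{Tr}(P\otimes Q)=\mathrm{Tr}(P)\,\mathrm{Tr}(Q)$ gives the product formula.

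The main obstacle is the intertwining identity in the first part. Verifying it means commuting the insertion of $\vg,\vg^{-1}$ past both the insertion of $\va,\va^{-1}$ and the action of $h$; this requires the pentagon identity for $\Theta^2$ (the collapsed hexagon), the vertical multiplicativity $\Theta^1(\partial(x)\vg,y)\,\Theta^1(\vg,x)=\Theta^1(\vg,yx)$, and the naturality square relating $\Theta^2$ to $\Theta^1(\cdot,x)$. By contrast, the second part is essentially bookkeeping once the structure constants of $\Psi\boxtimes\Theta$ are recognised as tensor products of those of $\Psi$ and $\Theta$.
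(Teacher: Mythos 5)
Your proposal follows the paper's own proof essentially verbatim: the paper also proves invariance by conjugating $\bX_\Theta(\vb,\va,h)$ with exactly this insertion-and-collapse intertwiner (called $\Gamma_\vg$ there) and appealing to invariance of trace under conjugation, and it proves multiplicativity by the same identification $\Tr_{nm}((\Theta\boxtimes\Psi)^1(\vb))\cong\Tr_n(\Theta^1(\vb))\otimes\Tr_m(\Psi^1(\vb))$ under which $\bX_{\Theta\boxtimes\Psi}=\bX_\Theta\otimes\bX_\Psi$. The intertwining identity you flag as the main obstacle is dismissed in the paper as ``straightforward to observe,'' so your version is, if anything, more explicit about the coherence inputs (pentagon, vertical multiplicativity, naturality) that the verification actually uses.
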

\begin{proof}
Let us prove the first statement, i.e., that
$
\mX_{\Theta}(\,^\vg\vb,\,^\vg\va,\,^\vg h)=
\mX_{\Theta}(\vb,\va,h)
$
for all $\vg\in G$. We have a natural ``conjugation''
linear map
$\Gamma_\vg:
\mathbb{T}r_n (\Theta^1(\vb)) \rightarrow
\mathbb{T}r_n (\Theta^1(\,^\vg\vb))$
given by the formula
$$
\Gamma_\vg
\Big (
\xymatrix{
n
\ar@/^1pc/[rr]^{\Theta^1(\vb)}="1"
\ar@/_1pc/[rr]_{\vi_{n}}="2"
\ar@{}"1";"2"|(.2){\,}="7"
\ar@{}"1";"2"|(.8){\,}="8"
&& n
\ar@{<=}"7" ;"8"^{v}
} 
\Big )
\; = \; 
\xymatrix{
n
\ar@/^1pc/[rr]^{\Theta^1(\vg)}="1"
\ar@/_1pc/[rr]_{\Theta^1(\vg)}="2"
\ar@{}"1";"2"|(.2){\,}="7"
\ar@{}"1";"2"|(.8){\,}="8"
\ar@{<=}"7" ;"8"^{Id}
\ar@/^5pc/[rrrrrr]^{\Theta^1(\vg\vb\vg^{-1})}="1"
\ar@/_5pc/[rrrrrr]_{\vi_{n}}="2"
&& n
\ar@/^1pc/[rr]^{\Theta^1(\vb)}="3"
\ar@/_1pc/[rr]_{\vi_{n}}="4"
\ar@{}"3";"4"|(.2){\,}="7"
\ar@{}"3";"4"|(.8){\,}="8"
\ar@{<=}"7" ;"8"^{v}
\ar@{}"1";"3"|(.2){\,}="7"
\ar@{}"1";"3"|(.8){\,}="8"
\ar@{<=}"7" ;"8"^{\Theta^2_{\vg\vb\vg^{-1},1}}
\ar@{}"4";"2"|(.2){\,}="7"
\ar@{}"4";"2"|(.8){\,}="8"
\ar@{<=}"7" ;"8"^{(\Theta^2_{1,1})\,^{-1}}
&& n
\ar@/^1pc/[rr]^{\Theta^1(\vg^{-1})}="1"
\ar@/_1pc/[rr]_{\Theta^1(\vg^{-1})}="2"
\ar@{}"1";"2"|(.2){\,}="7"
\ar@{}"1";"2"|(.8){\,}="8"
\ar@{<=}"7" ;"8"^{Id}
&& n
} 
$$
It is straightforward to observe that
$$
\bX_{\Theta}(\,^\vg\vb,\,^\vg\va,\,^\vg h)=
\Gamma_\vg \bX_{\Theta}(\vb,\va,h) \Gamma_\vg^{-1} \, ,
$$
hence, the linear maps
$
\bX_{\Theta}(\,^\vg\vb,\,^\vg\va,\,^\vg h)
$
and
$
\bX_{\Theta}(\vb,\va,h)
$
have the same trace. Let us prove the second statement now. Writing
matrices of vector spaces 
as 
$\Theta^1_{\star,\star}(\vb) = (U_{i,j} (\vb))$
and
$\Psi^1_{\star,\star}(\vb) = (V_{i,j} (\vb))$,
we observe that
$\Tr_{nm} (\Theta\boxtimes\Psi)^1_{\star,\star}(\vb) \cong
\sum_{i,t} U_{i,i} (\vb)\otimes V_{t,t} (\vb) \cong
(\sum_{i} U_{i,i} (\vb))\otimes (\sum_t V_{t,t} (\vb)) \cong
\Tr_{n} \Theta^1_{\star,\star}(\vb)
\otimes  
\Tr_{m}\Psi^1_{\star,\star}(\vb)$.
Identifying the 2-traces under these maps, we can observe that
$\bX_{\Theta\boxtimes\Psi}(\vb,\va,h)
=
\bX_\Theta(\vb,\va,h)
\otimes 
\bX_\Psi(\vb,\va,h)$
that implies the second statement.
\end{proof}

A character table of a finite group has rows and columns. Usually
one thinks of columns as characters, yet it is often
instructive to think of rows as characters. Applying this way
of thinking  to
the 2-characters we can use Proposition~\ref{tensor}
to conclude that 
a $G$-conjugacy class of triples $(\va,\vb,h)$
with $\partial (h) \va\vb = \vb \va$
determines 
a ring homomorphism 
$$
\mX (\vb,\va,h):
\bB_\bZ (\sK)\rightarrow \bK, \ \ 
[\Theta] \mapsto 
\mX_\Theta (\vb,\va,h).
$$
It can be extended by $\bK$-linearity to a $\bK$-algebra homomorphism
$
\mX (\vb,\va,h):
\bB_\bK (\sK)\rightarrow \bK
$. 
Both versions of 
$\mX$
should be called {\em a Ganter-Kapranov 2-character}.
In the finite case (i.e., under assumptions of 
Proposition~\ref{BPsK}) the Ganter-Kapranov 2-character must be one
of the marks. Which one?
\begin{theorem}
\label{Our_Char}
In the notations above, let $P$ be the subgroup of $\pi_1(\sK)$
generated by $\bar{\va}$ and $\bar{\vb}$. \newline 
Let~$\alpha\coloneqq \mX (\vb,\va,h)$ considered as a group
homomorphism 
$\tRep^1 (\sK_P)\rightarrow \bK^\times$. If the order of $\pi_1 (\sK)$
is finite and invertible in the field $\bK$, then
$$
\mX (\vb,\va,h)
= f_P^\alpha .
$$
\end{theorem}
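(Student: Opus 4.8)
The plan is to exploit that both $\mX(\vb,\va,h)$ and $f_P^\alpha$ are $\bK$-algebra homomorphisms $\bB_\bK(\sK)\to\bK$, so by $\bK$-linearity it suffices to check that they agree on the $\bK$-basis $\langle\Theta,Q\rangle$, where $Q\leq\pi_1(\sK)$ and $\Theta$ is a linear $2$-representation of $\widetilde{\sK_Q}$. Under the isomorphism $K_\bK(\sK)\cong\bB_\bK(\sK)$ of Proposition~\ref{Groth} the basis vector $\langle\Theta,Q\rangle$ is the class of the induced $2$-representation $\Theta\uparrow_{\widetilde{\sK}_Q}^{\widetilde{\sK}}$, so the left-hand side equals $\mX_{\Theta\uparrow}(\vb,\va,h)$. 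Thus the whole theorem reduces to a Frobenius/Mackey-type formula computing the Ganter--Kapranov $2$-character of an induced $2$-representation and comparing it with the mark formula for $f_P^\alpha$.

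To carry out that computation I would realise $\Theta=\Theta_A^\circ$ for the one-summand semimatrix $\sK_Q$-algebra $A=\End_\bK S$ and form the induced algebra $\widetilde A=\bigoplus_{\vt\in T}(\vt,A)$ of the induction construction, where $T$ is a lift to $G$ of a left transversal of $Q$ in $\pi_1(\sK)$. The simple summands of $\widetilde A$ are indexed by $T$, and by the explicit description of $\Theta_{\widetilde A}^\circ$ from Section~\ref{s2a} the $2$-trace $\Tr((\Theta\uparrow)^1(\vb))$ is supported on the summands fixed by the twist $[\vb]$. Since $[\vb]$ permutes the summands of $\widetilde A$ according to the translation action of $\vb$ on $G/\bar Q\cong\pi_1(\sK)/Q$, a summand $(\vt,A)$ survives in the $2$-trace exactly when $\vt^{-1}\bar\vb\vt\in Q$. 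The operator $\bX_{\Theta\uparrow}(\vb,\va,h)$ further involves the permutation $\sigma(\va)$ of the index set, and its ordinary trace retains only the summands also fixed by $\va$, i.e. those with $\vt^{-1}\bar\va\vt\in Q$. The two conditions together read $\vt^{-1}P\vt\subseteq Q$, equivalently $\vt^{-1}\in X$, which is precisely how the subgroup $P=\langle\bar\va,\bar\vb\rangle$ enters.

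On each surviving summand I would read off the scalar contributed to $\mathrm{Tr}(\bX_{\Theta\uparrow}(\vb,\va,h))$. Using the formulas $\,^\vg(\vt,a)=(\vg\vt,a)$ and $\omega_{\widetilde A}(h)=\sum_\vt(\vt,\omega_A(\,^{\vt^{-1}}h))$ for the $\sK$-structure on $\widetilde A$, the data $(\vb,\va,h)$ restricted to the summand $\vt$ becomes the conjugated data $(\vt^{-1}\vb\vt,\vt^{-1}\va\vt,\,^{\vt^{-1}}h)$; a one-line check gives $\partial(\,^{\vt^{-1}}h)(\vt^{-1}\va\vt)(\vt^{-1}\vb\vt)=(\vt^{-1}\vb\vt)(\vt^{-1}\va\vt)$, so this triple again lies in $\bG$ for $\widetilde{\sK_Q}$. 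Hence the contribution of the summand $\vt$ is $\mX_\Theta(\vt^{-1}\vb\vt,\vt^{-1}\va\vt,\,^{\vt^{-1}}h)=\alpha\big(\Phi(\gamma_{\vt^{-1}})(\Theta)\big)$, the last equality being the definition of pull-back along the conjugation $\gamma_{\vt^{-1}}$, which is well defined by Lemma~\ref{pull_back}. This yields $\mX_{\Theta\uparrow}(\vb,\va,h)=\sum_{\vt\in T,\ \vt^{-1}\in X}\alpha\big(\Phi(\gamma_{\vt^{-1}})(\Theta)\big)$.

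Finally I would match this with $f_P^\alpha(\langle\Theta,Q\rangle)=\tfrac1{|Q|}\sum_{\vg\in X}\alpha(\Phi(\gamma_\vg)(\Theta))$ by a coset count. The set $X$ is a union of right cosets $Q\vg$, and $\alpha(\Phi(\gamma_{q\vg})(\Theta))=\alpha(\Phi(\gamma_\vg)(\Theta))$ for $q\in Q$: lifting $q$ to $\dot q\in\bar Q$, conjugation by a group element of $\widetilde{\sK_Q}$ gives $\Phi(\gamma_q)(\Theta)\cong\Theta$, and $\mX$ depends only on the $2$-isomorphism class by Proposition~\ref{tensor}. As $\vt$ runs through the left transversal $T$, the elements $\vg=\vt^{-1}$ run through representatives of $Q\backslash\pi_1(\sK)$, so the surviving terms $\vt^{-1}\in X$ biject with $Q\backslash X$; summing the (constant-on-cosets) summand therefore reproduces $\tfrac1{|Q|}\sum_{\vg\in X}$, giving $\mX_{\Theta\uparrow}(\vb,\va,h)=f_P^\alpha(\langle\Theta,Q\rangle)$ and hence the theorem. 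The main obstacle is the middle step: the honest evaluation of the $2$-trace of $\bX_{\Theta\uparrow}$, i.e. isolating which induced summands survive in $\Tr$ and in $\mathrm{Tr}(\bX)$ and identifying each surviving scalar with the conjugated $2$-character value; the coset bookkeeping around the factor $\tfrac1{|Q|}$ is then routine.
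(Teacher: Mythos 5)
Your proposal is correct and follows essentially the same route as the paper's own proof: reduce via Proposition~\ref{Groth} to the induced 2-representation $\Theta_{\widetilde{A}}^\circ$, compute the 2-trace summand-by-summand so that only the $\vt$ with $\vt^{-1}P\vt\subseteq Q$ survive (your two conditions on $\bar\va,\bar\vb$ suffice, since the paper's third condition $\vt^{-1}\vb^{-1}\va\vb\vt\in\widehat{Q}$ follows from them), identify each surviving scalar with $\alpha\big(\Phi(\gamma_{\vt^{-1}})(\Theta)\big)$, and compare with the mark formula. The only difference is one of detail, in your favour: you spell out the $Q$-invariance $\Phi(\gamma_{q\vg})(\Theta)\cong\Phi(\gamma_{\vg})(\Theta)$ and the coset count absorbing the factor $\tfrac{1}{|Q|}$, which the paper compresses into ``examining the formula for $f_P^\alpha$.''
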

\begin{proof}
Let us compute both parts on an induced 2-representation.
Let $Q$ be a subgroup of $\pi_1 (\sK)$, 
$\widehat{Q}$ its inverse
image in $G$,
$\Theta=\Theta_A^\circ\in\tRep^1 (\sK_Q)$ that comes from an action of $\sK_Q$
on a full matrix algebra $A=\End_\bK (M)$.
Let us choose a left transversal $T$ to 
$\widehat{Q}$ in $G$
so that $\widetilde{\Theta}:={\Theta_A^\circ}\uparrow_{\widetilde{\sK_Q}}^{\widetilde{\sK}}
= \Theta_{\widetilde{A}}^\circ$.
where
$
\widetilde{A} 
= \oplus_{\vt\in T} (\vt, A)$.

Let $n = |T|$. The contribution to 
$\Tr_{n} (\widetilde{\Theta}(\vb))$ comes from those
$\vt$
that $\vb\vt \in \vt \widehat{Q}$ (equivalently 
$\vt^{-1}\vb\vt \in \widehat{Q}$):
$$
\Tr_{n} (\widetilde{\Theta}(\vb)) =
\bigoplus_{\vt\in T} \hom_{\widetilde{A}}
((\vt,M),(\vb\vt,M))
\cong
\bigoplus_{\vt\in T, \; \vt^{-1}\vb\vt \in \widehat{Q}}
\bK
$$
where $(\vx,M)$ is a simple module for $(\vx,A)$.
Observe that $(\vb\vt,M)=(\vt,M)^{[\vb]}$
and the condition $\vt^{-1}\vb\vt \in \widehat{Q}$
is equivalent to $\widetilde{A}$-modules
$(\vb\vt,M)$
and
$(\vt,M)$
being isomorphic. The linear map $\bX_\Theta(\vb,\va,h)$ goes via the
matrix of vector spaces
$\widetilde{\Theta}(\va)
\diamond
\widetilde{\Theta}(\vb)
\diamond
\widetilde{\Theta}(\va^{-1})
$
whose entries on the main diagonal are of the form
$$
U_{i,j} (\va) \otimes U_{j,j} (\vb) \otimes U_{j,i} (\va^{-1}) =
\hom_{\widetilde{A}}
((\vb\vt,M),(\va\vb\vt,M))
\otimes
\hom_{\widetilde{A}}
((\vt,M),(\vb\vt,M))
\otimes
\hom_{\widetilde{A}}
((\va^{-1}\vt,M),(\vt,M)).
$$
For this entry to be nonzero we need
the three elements
$(\va^{-1}\vt)^{-1}\vt = \vt^{-1}\va\vt$,
$\vt^{-1}\vb\vt$ 
and
$
(\vb\vt)^{-1}
\va\vb\vt
=
\vt^{-1}\vb^{-1}\va\vb\vt$ 
to be in $\widehat{Q}$. This is equivalent to 
$\vt^{-1}\widehat{P}\vt \subseteq \widehat{Q}$.

It remains to notice that the contribution from each such $\vt$ is
value of $\mX (\vb,\va,h)$ on the pull-back (under $\gamma_\vt$) of
the degree one  2-representation $\Theta$. The theorem follows immediately
by examining the formula for $f_P^\alpha$.
\end{proof}

\begin{center}
\section{Shapiro isomorphism}
\label{s5}
\end{center}

 Let $G$ be a group, $Q \leq G$ its subgroup, $M$ a $\bZ Q$-module.
Shapiro's lemma \cite{NM58} asserts isomorphisms in homology and
cohomology:
$$
H^{\ast}(G, \Coi^G_Q(M)) \cong H^{\ast}(Q,M), \ \ \ 
H_{\ast}(G, \Ind^G_Q(M)) \cong H_{\ast}(Q,M).
$$
The standard proof goes via a quasiisomorphism of the corresponding
complexes.
It does not give an explicit formula that we require for cohomology.
Hence, 
we supply an explicit chain homotopy 
$$
\psi : C^n(Q,M) \rightarrow C^n(G, \Coi_Q^G(M)).
$$
Choose a right transversal $T=\{\vt_1,\vt_2 \ldots\}_j$ to $Q$ in $G$ 
such that $\vt_1 = 1_G$.
The coinduced module  
$\Coi_Q^G(M)$ is the set of all $Q$-equivariant functions 
$f:G\rightarrow M$.
Such a function is uniquely determined by its values on $T$. 
The right transversal allows us to identify the coinduced module  
$\Coi_Q^G(M)$ with the set of all functions $f:T\rightarrow M$.
The cochains $C^n(Q,M)$ are also functions $\mu:Q^n\rightarrow M$.
Given elements 
$\vg_1, \ldots, \vg_n \in G$ and 
$\vt \in T$, there exist elements 
$\vh_1, \ldots, \vh_n \in Q$ and 
$\vs_0 = \vt, \vs_1, \ldots, \vs_n \in T$ uniquely determined
by the following equations:
$$
\vs_0 \vg_1 \cdots \vg_n = 
\vh_1 \vs_1 \vg_2 \cdots \vg_n = 
\ldots = 
\vh_1 \cdots \vh_{k} \vs_k \vg_{k+1} \cdots \vg_n = 
\ldots = 
\vh_1 \cdots \vh_{n-1} \vs_{n-1} \vg_n = 
\vh_1 \cdots \vh_n \vs_n.  
$$
We use these elements to define $\psi$ 
on a cochain $\mu\in C^n(Q,M)$:
$$\psi(\mu)(\vg_1, \ldots, \vg_n)(\vt) \coloneqq \mu(\vh_1, \ldots, \vh_n).
$$ 
In the opposite direction we define a map for arbitrary elements
$\vh_1, \ldots, \vh_n\in Q$:
$$
\phi : C^n(G, \Coi_Q^G(M)) \rightarrow C^n(Q,M), \ \ \ 
\phi(\theta)(\vh_1, \ldots, \vh_n) = \theta(\vh_1, \ldots, \vh_n)(1_G)
.$$
We are ready for the main result of this section.

\begin{theorem} \label{prop1}
Let $Q \leq G$ be groups, $M$ a $\bZ Q$-module. The above defined maps
$\phi$ and $\psi$ are isomorphisms of the cochain complexes
$C^{\ast}(Q, M)$ and $C^{\ast}(G, \Coi_Q^G(M))$ in the homotopic category.
\end{theorem}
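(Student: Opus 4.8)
The plan is to show that $\phi$ and $\psi$ are mutually inverse homotopy equivalences by combining three ingredients: both are genuine cochain maps, one composite is the identity on the nose, and the other composite is chain homotopic to the identity. Throughout I fix the convention that $G$ acts on $\Coi_Q^G(M)$ by right translation, $(\vg f)(\vx) = f(\vx\vg)$, and that $d$ denotes the usual inhomogeneous bar coboundary on both $C^{\ast}(G,-)$ and $C^{\ast}(Q,M)$. The conceptual reason the result holds is that $C^{\ast}(G,\Coi_Q^G(M))$ and $C^{\ast}(Q,M)$ both compute $H^{\ast}(Q,M)$ through free $\bZ Q$-resolutions of $\bZ$, and $\phi,\psi$ will be realised as the maps induced by explicit comparison chain maps between these resolutions; the homotopy then comes for free from the fundamental lemma of homological algebra.

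First I would verify that $\phi$ and $\psi$ commute with the coboundaries. For $\phi$ this is short: evaluating $(d\theta)(\vh_1,\dots,\vh_{n+1})$ at $1_G$, the leading term $\vh_1\cdot\theta(\vh_2,\dots,\vh_{n+1})$ evaluated at $1_G$ equals $\theta(\vh_2,\dots,\vh_{n+1})(\vh_1)$, which by the left $Q$-equivariance of coinduced functions equals $\vh_1\,\phi(\theta)(\vh_2,\dots,\vh_{n+1})$ since each $\vh_i\in Q$, while the face and final terms survive verbatim; hence $\phi d = d\phi$. For $\psi$ the computation is the combinatorial heart: one checks that the recursive decomposition $\vs_{k-1}\vg_k = \vh_k\vs_k$ defining the extracted elements $\vh_1,\dots,\vh_n$ is compatible with the simplicial operations. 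Concretely, merging the adjacent arguments $\vg_k,\vg_{k+1}$ merges the corresponding $\vh_k,\vh_{k+1}$ into $\vh_k\vh_{k+1}$, dropping the last argument drops $\vh_n$, and the effect of the leading right-translation by $\vg_1$ is exactly to relabel the base point $\vt$ used to build the $\vh_i$; assembling these identities gives $\psi d = d\psi$.

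Next I would record the strict identity $\phi\psi = \mathrm{id}_{C^{\ast}(Q,M)}$. Evaluating $\psi(\mu)(\vh_1,\dots,\vh_n)$ at $\vt = \vs_0 = 1_G$ with all $\vg_i = \vh_i\in Q$ forces $\vs_k = 1_G$ for every $k$, and hence $\vh_k = \vg_k$, so $\phi(\psi(\mu)) = \mu$. Thus $\psi$ is a split monomorphism of complexes and $e \coloneqq \psi\phi$ is an idempotent cochain endomorphism of $C^{\ast}(G,\Coi_Q^G(M))$.

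Finally, and this is the step I expect to be the real obstacle, I would produce a homotopy realising $\mathrm{id} - \psi\phi = ds + sd$. Rather than writing $s$ by hand, I would identify $\phi$ and $\psi$ with $\Hom_{\bZ Q}(-,M)$ applied to explicit $\bZ Q$-linear comparison maps between the bar resolution $B_{\ast}^Q$ of $\bZ$ over $\bZ Q$ and the restriction of the bar resolution $B_{\ast}^G$ over $\bZ G$, which is again a free $\bZ Q$-resolution of $\bZ$ because $\bZ G$ is free over $\bZ Q$ on the right cosets. The transversal decomposition encodes precisely such a pair of augmentation-preserving chain maps, so by the comparison theorem they are homotopy inverse over $\bZ Q$; applying the additive functor $\Hom_{\bZ Q}(-,M)$, which preserves chain homotopies, yields $\psi\phi \simeq \mathrm{id}$ (we already have $\phi\psi = \mathrm{id}$ strictly). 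Hence $\phi$ and $\psi$ are inverse isomorphisms in the homotopy category. The delicate point to get right is the bookkeeping identifying the transversal-defined $\phi,\psi$ with genuine comparison maps lifting $\mathrm{id}_{\bZ}$; once that is pinned down, an explicit contracting homotopy for $\mathrm{id}-\psi\phi$ can, if desired, be extracted from the standard simplicial contraction of the bar resolution, at the cost of a longer but entirely mechanical computation.
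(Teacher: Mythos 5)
Your proposal is correct, but it takes a genuinely different route from the paper. The paper proves $\phi\circ\psi=\mathrm{id}$ by exactly the evaluation you give, and then establishes $\psi\circ\phi\simeq\mathrm{id}$ entirely by hand: it writes down the explicit homotopy $\varpi^n(\theta)(\vg_1,\ldots,\vg_n)(\vt)=\sum_{j=0}^{n}(-1)^{j+1}\theta(\vh_1,\ldots,\vh_j,\vs_j,\vg_{j+1},\ldots,\vg_n)(1)$ and verifies $\psi\circ\phi-\mathds{1}=\varpi^{n}d^{n}+d^{n-1}\varpi^{n-1}$ by a labelled term-by-term cancellation. You instead transport everything through the Shapiro adjunction $C^{\ast}(G,\Coi_Q^G(M))\cong\Hom_{\bZ Q}(B_{\ast}^G,M)$, recognise $\phi$ and $\psi$ as $\Hom_{\bZ Q}(-,M)$ applied to the inclusion $\iota:B_{\ast}^Q\rightarrow B_{\ast}^G$ and to the $\bZ Q$-linear retraction $\pi(\vt[\vg_1|\cdots|\vg_n])=[\vh_1|\cdots|\vh_n]$, note $\pi\circ\iota=\mathrm{id}$ on the nose, and invoke the comparison theorem for the two projective $\bZ Q$-resolutions of $\bZ$ (the restriction of $B_{\ast}^G$ is free over $\bZ Q$ via the transversal) to get $\iota\circ\pi\simeq\mathrm{id}$. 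This is sound: the recursion $\vs_{k-1}\vg_k=\vh_k\vs_k$ makes $\pi$ a $\bZ Q$-linear chain map lifting $\mathrm{id}_{\bZ}$, the adjunction is an isomorphism of complexes (so the chain-map property of $\phi$ and $\psi$, which you also check directly, comes for free), and $\Hom_{\bZ Q}(-,M)$ preserves homotopies, so you obtain a genuine homotopy equivalence --- the strengthened form of Shapiro's lemma the theorem asserts, not merely a quasi-isomorphism. What each approach buys: yours is shorter, conceptually explains why the paper's $\varpi$ exists (it is the $\Hom_{\bZ Q}(-,M)$-dual of the standard bar-resolution homotopy contracting $\iota\circ\pi$ to the identity, as you note at the end), and suffices for all later uses in the paper (Proposition~\ref{Osor_Clas} and Theorem~\ref{Osor_Char} use only $\phi$, $\psi$ and the homotopy-inverse property); the paper's computation, by contrast, delivers the homotopy as a closed formula, which is the section's stated purpose, at the cost of a page of cancellations.
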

\begin{proof}
Observe that for $\vg_1, \ldots, \vg_n \in Q$ and $\vt = 1$ 
we get $\vh_j =\vg_j$. Hence, 
$$\phi(\psi(\mu))(\vg_1, \ldots, \vg_n)=
\psi(\mu)(\vg_1, \ldots, \vg_n)(1_G)=
\mu(\vg_1, \ldots, \vg_n)
$$
proving that 
$\phi \circ \psi$ is equal to the identity. 
In the opposite direction, 
$\psi \circ \phi$ is only homotopic to the identity:
\begin{displaymath}
\scalebox{0.95}[1]{
\xymatrix{
\cdots \ar[r] & C^{n-1}(P, \Coi_Q^G(M)) \ar[rr]^{d^{n-1}} \ar[dl] \ar@/^/[d]^{\phi \circ \psi} \ar@/_/[d]^{\mathds{1}} & &  C^{n}(P, \Coi_Q^G(M)) \ar[rr]^{d^{n}} \ar[dll]_{\varpi^{n-1}} \ar@/^/[d]^{\phi \circ \psi} \ar@/_/[d]^{\mathds{1}} & &  C^{n+1}(P, \Coi_Q^G(M)) \ar[r] \ar[dll]_{\varpi^{n}} \ar@/^/[d]^{\phi \circ \psi} \ar@/_/[d]^{\mathds{1}} & \ldots \ar[dl]\\
\cdots \ar[r] & C^{n-1}(P, \Coi_Q^G(M)) \ar[rr]^{d^{n-1}} & & C^{n}(P, \Coi_Q^G(M)) \ar[rr]^{d^{n}} & &  C^{n+1}(P, \Coi_Q^G(M)) \ar[r] & \cdots \\
}
}
\end{displaymath}
where the homotopy $\varpi$ is define by 
$$
\varpi^n(\theta)(\vg_1,\ldots,\vg_n)(\vt) = 
\sum_{j=0}^{n} (-1)^{j+1} \theta(\vh_1, \ldots, \vh_j, \vs_j, \vg_{j+1}, \ldots, \vg_n)(1).
$$
Let us verify that
$\psi \circ \phi - \mathds{1} = h^n \varpi^n + d^{n-1} \varpi^{n-1}$. 
Let us first examine the left hand side of this equality:
$$
(\psi \circ \phi - \mathds{1})(\theta)(\vg_1, \ldots, \vg_n)(\vt) = 
\psi \circ \phi(\theta)(\vg_1, \ldots, \vg_n)(\vt) - \theta(\vg_1, \ldots, \vg_n)(\vt)
= 
\theta(\vh_1, \ldots, \vh_n)(1) - \theta(\vg_1, \ldots, \vg_n)(\vt).
$$
Now we scrutinise
the first term of the right hand side.
It is useful to pay attention which $\vs_j$ appears in terms of 
the final expression
because
it tells you from which term of the second expression it originates.
We label the lines to help observe the cancellations:
\begin{align}
\varpi^n( d^n(\theta))(\vg_1, \ldots, \vg_n)(\vt) 
&
= 
\sum_{j=0}^n (-1)^{j+1} d^n(\theta)(\vh_1, \ldots, \vh_j, \vs_j,
\vg_{j+1}, \ldots, \vg_n)(1)
= \notag
\\ 
&
- \theta(\vg_1, \ldots, \vg_n)(\vs_0)
\\ 
& + \sum_{j=1}^{n} (-1)^{j+1} \theta(\vh_2,
  \ldots, \vh_{j}, \vs_j, \vg_{j+1}, \ldots, \vg_n)(\vh_1) \ +
\\ 
& + \sum_{j=2}^n \sum_{k=1}^{j-1} (-1)^{j+k+1} \theta(\ldots \vh_{k-1},
\vh_k\vh_{k+1}, \vh_{k+2}, \ldots, \vh_j, \vs_j, \vg_j \ldots)(1)
\\ 
& - \sum_{j=1}^n \theta(\vh_1, \ldots, \vh_{j-1}, \vh_j \vs_j, \vg_{j+1}, \ldots, \vg_n)(1)
\\ 
& + \sum_{j=0}^{n-1} \theta(\vh_1, \ldots, \vh_{j-1}, \vs_j\vg_{j+1},\vg_{j+1},\ldots, \vg_n)(1)
\\ 
& + \sum_{j=0}^{n-2} \sum_{k = j+1}^{n-1} (-1)^{j+k} 
\theta(\ldots \vh_j, \vs_j, \vg_{j+1}, \ldots, \vg_{k-1}, \vg_k\vg_{k+1}, \vg_{k+2}
\ldots ) (1)
\\ 
& + \sum_{j=0}^{n-1} (-1)^{j+n} \theta(\vh_1, \ldots, \vh_j, \vs_j,\vg_{j+1},
\ldots , \vg_{n-1})(1)
\\ & 
+ \theta(\vh_1,  \ldots, \vh_{n})(1).
\end{align}
Lines (1) and (8) contribute to the left hand side.
Lines (4) and (5) cancel because $\vs_j\vg_{j+1}=\vh_{j+1}\vs_{j+1}$. 
The remaining
lines cancel
with the second term (line labels
correspond to 
their
cancelling counterparts):
\begin{align}
d^{n-1}(\varpi^{n-1}(\theta))(\vg_1, \ldots, \vg_n)(\vt) & = 
\notag
\\
\varpi^{n-1}(\theta)(\vg_2,\ldots, \vg_n)(\vh_1 \vs_1) 
&
+ \sum_{k=1}^{n-1} (-1)^k \varpi^{n-1}(\theta)(\vg_1,
\ldots, \vg_k\vg_{k+1} \ldots)(\vt)
+ (-1)^{n} \varpi^{n-1}(\theta)(\vg_1, \ldots,
\vg_{n-1})(\vt)
\notag
\\
& = \sum_{j=1}^n (-1)^j \theta(\vh_2, \ldots, \vh_{j}, \vs_j, 
\vg_{j+1}, \ldots,\vg_n)(\vh_1)
\tag{2}
\\
& + \sum_{k=1}^{n-1} \sum_{j=0}^{k-1}
  (-1)^{k+j+1} \theta(\ldots \vh_{j}, \vs_j, \vg_{j+1}, \ldots , \vg_{k-1},
  \vg_k\vg_{k+1}, \vg_{k+2} \ldots )(1)
\tag{6}
\\
& + \sum_{k=1}^{n-1} \sum_{j=k+1}^{n-1} (-1)^{k+j} 
\theta(\ldots \vh_{k-1},
\vh_k \vh_{k+1}, \vh_{k+2}, \ldots, \vh_j, \vs_j, \vg_{j+1}, \ldots , \vg_n)(1) 
\tag{3}
\\
& + \sum_{j=0}^{n-1} (-1)^{n+j+1} \theta(\vh_1, \ldots, \vh_j, \vs_j, \vg_{j+1}, \ldots, \vg_{n-1})(1).
\tag{7}
\end{align}
\end{proof}

\begin{center}
\section{Osorno Formula}
\label{s6}
\end{center}

In this section we investigate the special case of trivial $H$. 
Thus,  $G = \pi_1(\sK)$ is a finite group $G = \pi_1(\sK)$.
We will write $G$ for $\sK$ where appropriate, e.g.,  
$\tRep(G)=\tRep(\sK)$ etc.
The degree one  2-representations of $G$ are in bijection with
elements of the Schur multiplier over $\bK$:
\begin{prop}
  (cf. \cite[5.3]{Elg}) 
\label{Osor_linClas}
The group of degree one  2-representations $(\tRep_1 (G),\boxtimes)$ 
(see Lemma~\ref{Lin2})
is isomorphic to $H^2(G, \bK^{\times})$
where the multiplicative group $\bK^{\times}$ is a trivial $\bZ G$-module.
\end{prop}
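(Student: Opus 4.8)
The plan is to unwind the definition of a degree $1$ 2-representation, match its data to a $2$-cocycle of $G$ with values in $\bK^\times$, and check that the tensor product $\boxtimes$ becomes multiplication of cocycle classes. Since $H$ is trivial, $\sK = G$, and by Item~2 of the list in Section~\ref{s1} a degree $1$ 2-representation $R$ has $R^1(\vg)$ a $1\times 1$ matrix of vector spaces, i.e. a single space $V(\vg)$; as $\dim\circ R^1$ is a homomorphism $G \to \GL_1(\bN)$, each $V(\vg)$ is one-dimensional. The remaining data are the isomorphism $R^2_\star : \bK \xrightarrow{\cong} V(1)$ and the coherence isomorphisms $R^2(\vf,\vg): V(\vf)\otimes V(\vg) \xrightarrow{\cong} V(\vf\vg)$, subject to the pentagon and triangle identities. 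Lemma~\ref{morita_dec_group} already extracts from this data a cocycle $\mu \in Z^2(G,\bK^\times)$ (choose a basis vector $b(\vg)\in V(\vg)$ for each $\vg$ with $b(1) = R^2_\star(1)$, and read off $R^2(\vf,\vg)(b(\vf)\otimes b(\vg)) = \mu(\vf,\vg)\, b(\vf\vg)$), whose class $[\mu] \in H^2(G,\bK^\times)$ is canonically attached to $R$. This defines the candidate map
\[
\Lambda : \tRep_1(G) \to H^2(G,\bK^\times), \qquad [R]\mapsto [\mu].
\]

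Next I would verify $\Lambda$ is a group homomorphism. Well-definedness on $2$-isomorphism classes, together with independence of the choice of bases, is exactly part~2 of Lemma~\ref{morita_dec_group}. For multiplicativity, note that for degree $1$ representations the tensor product is computed entrywise: $V_{R\boxtimes R'}(\vg) = V_R(\vg)\otimes V_{R'}(\vg)$ and the coherence map is $R^2(\vf,\vg)\otimes (R')^2(\vf,\vg)$. Choosing the product basis $b(\vg)\otimes b'(\vg)$, the associated cocycle is $\mu\cdot\mu'$, so $\Lambda([R\boxtimes R']) = [\mu]\cdot[\mu'] = \Lambda([R])\Lambda([R'])$. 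As a consistency check, the contragradient $\Theta_A^\ast$ of Lemma~\ref{Lin2} produces $\mu^{-1}$, matching the inverse in $H^2(G,\bK^\times)$.

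Finally I would prove $\Lambda$ is a bijection. For surjectivity, given a class with normalized representative $\mu\in Z^2(G,\bK^\times)$ I build $R$ with $V(\vg) = \bK$ for all $\vg$, $R^2_\star = \mathrm{id}$, and $R^2(\vf,\vg) = \mu(\vf,\vg)$; the pentagon and triangle identities for $R$ are precisely the normalized cocycle conditions, so $R\in\tRep_1(G)$ and $\Lambda([R]) = [\mu]$. For injectivity it suffices to show $\ker\Lambda$ is trivial: if $[\mu] = 1$, write $\mu(\vf,\vg) = \beta(\vf)\beta(\vg)\beta(\vf\vg)^{-1}$ for some $\beta : G\to\bK^\times$ with $\beta(1) = 1$, and use $\beta$ to construct a natural $2$-isomorphism from $R$ to the unit object of $(\tRep_1(G),\boxtimes)$: the component $\psi^1_\star$ is the identity functor and $\psi^2_\star(\vg)$ rescales the chosen basis $b(\vg)$ by $\beta(\vg)$, the two coherence conditions of Section~\ref{s1} reducing exactly to the normalization and coboundary relations.

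The main obstacle is the bookkeeping in this last step, matching the two coherence axioms of a natural $2$-transformation against the normalization and coboundary identities; but this is the standard dictionary between $2$-cocycles and coherence data, and presents no conceptual difficulty once $\Lambda$ is set up through Lemma~\ref{morita_dec_group}.
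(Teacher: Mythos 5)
Your proof is correct, but it takes a genuinely different route from the paper's. The paper's proof is Morita-theoretic: it invokes Corollary~\ref{morita_dec} to attach to a linear 2-representation $\Theta$ a unique $G$-Morita equivalence class of split simple $G$-algebras with representative $A$ satisfying $\Theta\cong\Theta_A$, identifies $\{\Theta\}$ with the cohomology class of the associated projective representation $G\rightarrow \Aut(A)\cong \PGL_n(\bK)$, obtains bijectivity by citing \cite[Theorem 4.3]{GRR11}, and gets multiplicativity because tensor product of algebras corresponds to addition of cocycle classes. You instead work directly with the 2-functor data in $\tV$: you extract the cocycle from the coherence isomorphisms (the $n=1$ case of Lemma~\ref{morita_dec_group}, parts 1--2), prove surjectivity by building the 2-representation with $V(\vg)=\bK$ and $R^2(\vf,\vg)=\mu(\vf,\vg)$, and prove injectivity by converting a coboundary $\beta$ into an explicit natural 2-isomorphism to the unit object, using Lemma~\ref{Lin2} to reduce injectivity to triviality of the kernel. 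Your route buys self-containedness (no appeal to the external classification theorem) and, notably, it nowhere uses realizability of cocycles, so it works verbatim for infinite $G$, where the paper's split-simple-algebra model fails for non-realizable classes; the paper's route buys the algebra model $\Theta\cong\Theta_A$ together with the projective representation $\rho$ on a simple module, which is exactly the form of the data consumed in the subsequent computation of Theorem~\ref{th2}. Two small points to tidy: the paper's proof of part 2 of Lemma~\ref{morita_dec_group} explicitly treats only a change of basis vectors, so for well-definedness on 2-isomorphism classes you should additionally note that a 2-isomorphism of 2-representations transports bases and its two coherence axioms force the cocycles to be cohomologous --- this is essentially your injectivity computation run in reverse; and your class may differ from the paper's $\{\Theta\}$ by inversion (in the proof of part 3 of Lemma~\ref{morita_dec_group} the paper realizes $\mu^{-1}$, not $\mu$, on the module $S$), which is harmless for the statement since inversion is an automorphism of $H^2(G,\bK^\times)$.
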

\begin{proof}
Let $\Theta$ be a degree one  2-representation of $G$.
Lemma~\ref{morita_dec} attaches to $\Theta$
a unique $G$-Morita equivalence
class
of split simple algebras, whose representative $A$ satisfies
$\Theta\cong\Theta_A$. The cohomology class 
$\{\Theta\}\in H^2(G, \bK^{\times})$
of the corresponding projective representation 
$G\rightarrow \Aut (A)\cong \PGL_n (\bK)$
defines a bijection  \cite[Theorem 4.3]{GRR11}
$$
\langle \Theta, G \rangle \mapsto \{\Theta\}, \ \ \  \tRep_1 (G)\rightarrow H^2(G, \bK^{\times}).
$$
It is a group isomorphism because the tensor product of algebras
correspond to the addition of cocycles.
\end{proof}
Since $H$ is trivial we drop $h$ from the notation for the
Ganter-Kapranov 2-character: 
$\mX (\vb,\va)\coloneqq \mX (\vb,\va,1)$.
Let us compute its value on a degree one 2-representation:
\begin{theorem}
\label{th2}
Let $\va,\vb\in G$ be commuting elements, $\Theta$ 
a degree one 
2-representation of $G$,
$\mu\in  Z^2 (G,\bK^\times )$ a cocycle
such that
$[\mu] = \{ \Theta\}$.
Then
$$
\mX(\vb,\va)(\langle \Theta, G \rangle) = \mu(\vb,\va^{-1})
\mu(\va^{-1},\vb)^{-1}.
$$
\end{theorem}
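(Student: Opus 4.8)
The plan is to reduce to an explicit matrix-algebra model and then convert the categorical trace into a short cocycle manipulation. Since $\mX(\vb,\va)$ is a well-defined function on $2$-isomorphism classes (by construction it factors through $\bB_\bZ(\sK)$, sending $[\Theta]\mapsto\mX_\Theta(\vb,\va)$), I may replace $\Theta$ by any convenient representative of its class. Using Proposition~\ref{Osor_linClas} together with Corollary~\ref{morita_dec}, I choose $\Theta\cong\Theta_A$ with $A=\End_\bK(S)$, where $S$ carries an irreducible projective representation $\rho:G\rightarrow\GL(S)$ whose cocycle is $\mu$, normalised so that $\rho(1)=\mathrm{Id}$ and $\rho(\vf)\rho(\vg)=\mu(\vf,\vg)\,\rho(\vf\vg)$.

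First I would identify the relevant one-dimensional spaces. Using the description of $\Theta_A^\circ$ from Section~\ref{s2a} together with Skolem--Noether, every $A$-linear map $S\rightarrow S^{[\vg]}$ is a scalar multiple of $\rho(\vg)$, so $\Theta^1(\vg)=\hom(S,S^{[\vg]})=\bK\,\rho(\vg)$ is one-dimensional with distinguished basis $b(\vg)\coloneqq\rho(\vg)$. On these bases the composition maps read $\Theta^2(\vf,\vg):b(\vf)\otimes b(\vg)\mapsto\mu(\vf,\vg)\,b(\vf\vg)$. In particular the $2$-trace $\Tr_1(\Theta^1(\vb))=\Theta^1(\vb)=\bK\,b(\vb)$ is one-dimensional, so $\bX_\Theta(\vb,\va)$ is multiplication by a single scalar, and that scalar is exactly $\mX_\Theta(\vb,\va)$.

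Next I would evaluate the two structural ingredients of $\bX_\Theta(\vb,\va)$ on these bases. The coevaluation $\vi_1\rightarrow\Theta(\va)\diamond\Theta(\va^{-1})$ (the part labelled $[\va,1,\va^{-1},1]^{-1}$) is the inverse of $\Theta^2(\va,\va^{-1})$ composed with the unitary identification $\Theta(1)\cong\vi_1$, and hence sends $1\mapsto\mu(\va,\va^{-1})^{-1}\,b(\va)\otimes b(\va^{-1})$. The natural morphism $\Theta(\va)\diamond\Theta(\vb)\diamond\Theta(\va^{-1})\rightarrow\Theta(\va\vb\va^{-1})=\Theta(\vb)$ (here $h=1$ and $\va\vb\va^{-1}=\vb$ since $\va,\vb$ commute) sends $b(\va)\otimes b(\vb)\otimes b(\va^{-1})\mapsto\mu(\va,\vb)\,\mu(\va\vb,\va^{-1})\,b(\vb)$. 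Feeding $b(\vb)$ through the elementary formula for $\bX_\Theta$ in Section~\ref{s4} and multiplying these scalars yields
$$\mX_\Theta(\vb,\va)=\mu(\va,\vb)\,\mu(\va\vb,\va^{-1})\,\mu(\va,\va^{-1})^{-1}.$$

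The final step is purely cohomological. Applying the normalised $2$-cocycle identity with $(\vf,\vg,\vh)=(\va,\vb,\va^{-1})$ gives $\mu(\va,\vb)\mu(\va\vb,\va^{-1})=\mu(\vb,\va^{-1})\mu(\va,\vb\va^{-1})$; then using $\vb\va^{-1}=\va^{-1}\vb$ together with the identity for $(\vf,\vg,\vh)=(\va,\va^{-1},\vb)$ gives $\mu(\va,\va^{-1}\vb)=\mu(\va,\va^{-1})\mu(\va^{-1},\vb)^{-1}$, and substituting collapses the expression to $\mu(\vb,\va^{-1})\mu(\va^{-1},\vb)^{-1}$, as claimed. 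I expect the main obstacle to be bookkeeping rather than conceptual: pinning down the composition order in $\Theta^2$, the direction of the twists $[\,\cdot\,]$, and the inverse conventions for $\mu$, so that the intermediate scalar comes out in the correct orientation (a mirrored convention would produce the reciprocal). The cocycle manipulation itself is robust and forces the stated answer once the reduction to the $\rho$-model is in place.
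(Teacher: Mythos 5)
Your proposal is, at its core, the paper's own proof: both reduce to $\Theta\cong\Theta_A$ with $A=\End_\bK(S)$ carrying a projective representation $\rho$ normalised by $\rho(1)=\mathrm{Id}$, both identify the $2$-trace with the line $\hom_A(S,S^{[\vb]})=\bK\,\rho(\vb)$, both extract the scalar of $\bX_\Theta(\vb,\va)$ from the coevaluation and the composition constraints, and both finish with the same two applications of the cocycle identity (your intermediate scalar $\mu(\va,\vb)\mu(\va\vb,\va^{-1})\mu(\va,\va^{-1})^{-1}$ is literally the second expression in Corollary~\ref{cor1}). There is, however, one genuine gap: the theorem quantifies over an \emph{arbitrary} cocycle $\mu$ with $[\mu]=\{\Theta\}$, while your argument assumes that $\mu$ itself is the cocycle of the normalised $\rho$ — and your final manipulation really uses this, since the identity $\mu(\va,\va^{-1}\vb)=\mu(\va,\va^{-1})\mu(\va^{-1},\vb)^{-1}$ acquires an extra factor $\mu(1,\vb)$ for an unnormalised cocycle. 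You need the preliminary step the paper carries out: writing $\mu=\nu\cdot d\pi$ with $\nu$ the normalised cocycle of $\rho$, the quantity $\mu(\vb,\va^{-1})\mu(\va^{-1},\vb)^{-1}$ is unchanged under multiplication by $d\pi$, precisely because $\vb\va^{-1}=\va^{-1}\vb$ makes the $\pi$-factors cancel. Without this, you have proved the formula only for one representative of the cohomology class.

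Separately, the orientation hazard you flagged at the end is real, and as written you survive it only because two mirror choices compensate. You adopt the convention $\rho(\vf)\rho(\vg)=\mu(\vf,\vg)\rho(\vf\vg)$, which is the \emph{opposite} of the paper's $\rho(\vg\vh)=\nu(\vg,\vh)\rho(\vg)\rho(\vh)$; the paper remarks explicitly after Theorem~\ref{th2} that the opposite convention turns the formula into its reciprocal. Simultaneously, you evaluate the string $\Theta(\va)\diamond\Theta(\vb)\diamond\Theta(\va^{-1})$ as the ordinary composite $\rho(\va)\rho(\vb)\rho(\va^{-1})$, i.e.\ conjugation by $\rho(\va)$, whereas the paper's left-to-right diamond (with $F\diamond G=G\circ F$) leads to conjugation by $\rho(\va^{-1})$ in its proof. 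Each flip on its own yields the reciprocal $\mu(\va^{-1},\vb)\mu(\vb,\va^{-1})^{-1}$; the two together cancel, so your final answer agrees with the statement, but the agreement is an artefact of compensating deviations rather than a verified orientation. Fix one coherent set of conventions (the paper's: $[\nu]=\{\Theta\}$ with $\rho(\vg\vh)=\nu(\vg,\vh)\rho(\vg)\rho(\vh)$, and $\bX_\Theta(\vb,\va)(\rho(\vb))=\rho(\va^{-1})\rho(\vb)\rho(\va^{-1})^{-1}$) and redo the one-line evaluation in that frame.
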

\begin{proof}
Let $A$ be a split simple $G$-algebra such that 
$\Theta\cong\Theta_A$, $M$ a simple $A$-module.
Let $\rho$ be the projective representation of $G$
on $M$ such that $\rho (1) = \mbox{Id}_M$. 
The cocycle $\nu$ defined by the identity 
$$
\rho(\vg\vh) = \nu (\vg,\vh)\rho (\vg)\rho (\vh) 
\ \mbox{ for all } \ 
\vg,\vh\in G
$$
satisfies
$[\nu] = \{ \Theta\}$. Then $\mu=\nu\cdot d\pi$ for 
some cochain $\pi\in  C^1 (G,\bK^\times )$. One can use $\nu$ rather
than
$\mu$ on the right hand side:
\begin{align*}
\mu(\vb,\va^{-1})
\mu(\va^{-1},\vb)^{-1}
= &
(\nu(\vb,\va^{-1})\pi(\vb)\pi(\vb\va^{-1})^{-1}\pi(\va^{-1}))
(\mu(\va^{-1},\vb)\pi(\va^{-1})\pi(\va^{-1}\vb)^{-1}\pi(\vb))^{-1}
\\
= &
\nu(\vb,\va^{-1})
\nu(\va^{-1},\vb)^{-1}
.&
\end{align*}
The condition $\rho (1) = \mbox{Id}_M$ makes the cocycle $\nu$
normalised and brings additional identities:
$$
\nu (\vg,1)=\nu (1,\vg)=1, \ \ 
\nu (\vg,\vg^{-1})=\nu (\vg^{-1},\vg), \ \ 
\rho(\vg^{-1})^{-1}= \nu (\vg,\vg^{-1})\rho(\vg).
$$
The linear map $\bX_\Theta(\vb,\va)$ operates on
the one-dimensional space $\hom_A(M,M^{[\vb]})$,
a subspace of $\hom_\bK(M,M)$ spanned by $\rho (\vb)$.
More precisely,
\begin{align*}
\bX_\Theta(\vb,\va) (\rho (\vb)) = &
\rho (\va^{-1})\rho (\vb)\rho (\va^{-1})^{-1}
=
\nu (\va^{-1},\vb)^{-1}\rho (\va^{-1}\vb)\nu (\va,\va^{-1})\rho(\va)\\
= &
\nu (\va^{-1},\vb)^{-1}\nu (\va^{-1}\vb,\va)^{-1}\nu(\va,\va^{-1})\rho(\vb)
.
\end{align*}
We can
finish the proof using the cocycle condition and the fact that 
$\va^{-1}$ and $\vb$ commute:
\begin{align*}
\nu (\va^{-1},\vb)^{-1}
\nu(\va,\va^{-1})
\nu (\vb\va^{-1},\va)^{-1}
= &
\nu (\va^{-1},\vb)^{-1}
\nu(\va,\va^{-1})
\nu (\va^{-1},\va)^{-1}
\nu(\vb,1)^{-1}
\nu(\vb,\va^{-1})\\
= &
\nu(\vb,\va^{-1})
\nu(\va^{-1},\vb)^{-1}.
\end{align*}
\end{proof}
Occasionally in the literature
the opposite cocycle is associated to a projective representation: one
can use
$\nu (\vg,\vh) \rho(\vg\vh) = \rho (\vg)\rho (\vh)$ instead. Then
the formula for
$
\mX(\vb,\va)(\langle \Theta, G \rangle)
$ 
in Theorem~\ref{th2}
changes to its reciprocal. Other choices leading to the reciprocal are
using right representations instead of left ones or using
$\va^{-1}\vb\va$
in the definition of Ganter-Kapranov 2-character. We are ready to
derive
a formula for an irreducible 2-representation:
\begin{cor}
\label{cor1}
Let $\Theta$ be a degree one 2-representation of a subgroup $P \leq G$,
$\mu \in Z^2(P, \bK^{\times})$
a cocycle such that 
$\{\Theta\} = [\mu]$. Let $T$ be a right 
transversal to $P$ in $G$. 
If $\,^\vt\va:=\vt\va\vt^{-1}$ then
$$
\mX(\vb,\va)(\langle \Theta, P\rangle)
 = 
\sum_{\vt\in T, \; \,^{\vt}\va, \,^{\vt}\vb \in P} 
\frac{\mu(\,^{\vt}\vb, (\,^{\vt}\va)^{-1})}{ \mu ((\,^\vt\va)^{-1},
  \,^\vt\vb)}
=
\sum_{\vt\in T, \; \,^{\vt}\va, \,^{\vt}\vb \in P} 
\frac{\mu(\,^{\vt}\vb, (\,^{\vt}\va)^{-1})\mu(\,^{\vt}\va, (\,^{\vt}\vb)(\,^{\vt}\va)^{-1})}{\mu(\,^{\vt}\va, (\,^{\vt}\va)^{-1}) \mu(1,1)}.
$$
\end{cor}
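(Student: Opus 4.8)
The plan is to deduce the formula from Theorem~\ref{Our_Char}, which identifies $\mX(\vb,\va)$ with a single mark, and Theorem~\ref{th2}, which evaluates the associated homomorphism on linear 2-representations. Since $H$ is trivial we have $\pi_1(\sK)=G$; write $P_0\coloneqq\langle\va,\vb\rangle$ for the subgroup generated by the commuting pair. By Theorem~\ref{Our_Char}, $\mX(\vb,\va)=f_{P_0}^\alpha$ with $\alpha=\mX(\vb,\va):\tRep_1(\sK_{P_0})\to\bK^\times$. First I would apply the defining formula of the mark to the basis element $\langle\Theta,P\rangle$, obtaining
$$
\mX(\vb,\va)(\langle\Theta,P\rangle)=f_{P_0}^\alpha(\langle\Theta,P\rangle)=\frac{1}{|P|}\sum_{\vg\in X}\alpha\big(\Phi(\gamma_\vg)(\Theta)\big),\qquad X=\{\vg\in G\mid \vg P_0\vg^{-1}\subseteq P\}.
$$

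Next I would compute each summand. The pull-back $\Phi(\gamma_\vg)(\Theta)$ is a linear 2-representation of $P_0$ whose cocycle is $\mu_\vg(x,y)=\mu(\,^\vg x,\,^\vg y)$, so Theorem~\ref{th2} gives $\alpha(\Phi(\gamma_\vg)(\Theta))=\mu_\vg(\vb,\va^{-1})\mu_\vg(\va^{-1},\vb)^{-1}=\mu(\,^\vg\vb,(\,^\vg\va)^{-1})/\mu((\,^\vg\va)^{-1},\,^\vg\vb)$.

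Then I would convert the weighted sum over $X$ into the transversal sum. The key observation is that $X$ is a union of right cosets $P\vg$ (it is stable under left multiplication by $P$, since $(p\vg)P_0(p\vg)^{-1}\subseteq pPp^{-1}=P$), and that the summand is constant on each such coset: for $p\in P$ the conjugation $\gamma_{p\vg}$ factors as $\gamma_p\circ\gamma_\vg$, whence by contravariance $\Phi(\gamma_{p\vg})=\Phi(\gamma_\vg)\circ\Phi(\gamma_p)$, and $\Phi(\gamma_p)$ is the identity on $\Phi(P)\cong H^2(P,\bK^\times)$ because inner automorphisms act trivially on cohomology. Thus $\Phi(\gamma_{p\vg})(\Theta)=\Phi(\gamma_\vg)(\Theta)$ and $\alpha$ takes the same value on the whole coset. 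Hence each coset lying in $X$ contributes $|P|$ equal terms, the factor $1/|P|$ cancels, and the sum collapses to one over representatives $\vt\in T$ with $\vt\in X$, i.e.\ with $\,^\vt\va,\,^\vt\vb\in P$; this yields the first equality. The step I expect to be the main obstacle is verifying this coset-constancy cleanly — making sure the finer category $\sS(\sK)$ of Section~\ref{s3} (which distinguishes conjugations differing by a central element) does not spoil the identification $\Phi(\gamma_p)=\mathrm{id}$, and correctly tracking the contravariance of $\Phi$.

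Finally, for the second equality I would fix $\vt\in X$, set $c\coloneqq\,^\vt\va$ and $d\coloneqq\,^\vt\vb$ (which commute in $P$), and manipulate the cocycle relation $\mu(\vg,\vh)\mu(\vg\vh,\vk)=\mu(\vh,\vk)\mu(\vg,\vh\vk)$ for the trivial module $\bK^\times$. Evaluating it at $(\vg,\vh,\vk)=(c,c^{-1},d)$ and using $c^{-1}d=dc^{-1}$ gives $\mu(c,c^{-1})\mu(1,d)=\mu(c^{-1},d)\mu(c,dc^{-1})$; combined with $\mu(1,d)=\mu(1,1)$ (the relation at $(1,1,d)$), this rearranges to $\mu(c,c^{-1})\mu(1,1)=\mu(c,dc^{-1})\mu(c^{-1},d)$, which is exactly the factor relating the two displayed summands termwise. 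Substituting termwise and summing over $\vt$ then completes the proof.
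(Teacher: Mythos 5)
Your proposal is correct and follows essentially the same route as the paper's proof: Theorem~\ref{Our_Char} identifies $\mX(\vb,\va)$ with the mark $f_{P_0}^\alpha$ for $P_0=\langle\va,\vb\rangle$, Theorem~\ref{th2} evaluates $\alpha$ on each pull-back via the conjugated cocycle $\mu(\,^\vg\cdot\,,\,^\vg\cdot\,)$, the weighted sum collapses to the transversal, and the cocycle relation at $(c,c^{-1},d)$ together with $\mu(1,d)=\mu(1,1)$ gives the second equality exactly as in the paper's preliminary identity $\mu(\vg,\vh\vg^{-1})=\mu(\vg^{-1},\vh)^{-1}\mu(\vg,\vg^{-1})\mu(1,1)$. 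The only difference is that you explicitly verify the constancy of the summand on left cosets $P\vg$ (via $\Phi(\gamma_{p\vg})=\Phi(\gamma_\vg)\circ\Phi(\gamma_p)$ and triviality of inner automorphisms on $H^2(P,\bK^\times)$), a step the paper's proof passes over silently; your verification is correct, and your concern about the finer category $\sS(\sK)$ is moot here since $H$ is trivial.
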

\begin{proof}
If $\vg$ and $\vh$ commute,
then the cocycle condition implies the following identity:
$$
\mu(\vg,\vh\vg^{-1}) 
= 
\mu(\vg,\vg^{-1}\vh) 
= 
\mu(\vg^{-1},\vh)^{-1} 
\mu(\vg,\vg^{-1}) 
\mu(1,\vh) = 
\mu(\vg^{-1},\vh)^{-1} 
\mu(\vg,\vg^{-1}) 
\mu(1,1).
$$
Using 
Theorem~\ref{th2},  
Theorem~\ref{Our_Char}
and the definition of the mark homomorphism we compute the character:
\begin{align*}
\mX(\vb,\va)(\Theta, P)
 & = \frac{1}{\vert P \vert} 
\sum_{\vg\in G, \; \,^\vg \vb, \,^\vg \va \in P} 
\mu(\,^\vg\vb,(\,^\vg\va)^{-1}) \mu ((\,^\vg\va)^{-1}, \,^\vg\vb)^{-1}
\\
 & = 
\sum_{\vt\in T, \; \,^\vt \vb, \,^\vt \va \in P} 
\mu(\,^\vt\vb,(\,^\vt\va)^{-1}) \mu ((\,^\vt\va)^{-1}, \,^\vt\vb)^{-1}
\\
& = 
\sum_{\vt\in T, \; \,^{\vt}\va, \,^{\vt}\vb \in P} 
\frac{\mu(\,^{\vt}\vb, (\,^{\vt}\va)^{-1})\mu(\,^{\vt}\va,
  (\,^{\vt}\vb)(\,^{\vt}\va)^{-1})}{\mu(\,^{\vt}\va,
  (\,^{\vt}\va)^{-1}) \mu(1,1)}.
\end{align*}
\end{proof}
Corollary~\ref{cor1}
allows us to compute the value of the Ganter-Kapranov 2-character
on any 2-representation in terms of its decorated $G$-set
\cite{GRR11},
i.e. a finite $G$-set $X$, decorated with a cocycle 
$\mu_x\in Z^2 (G_x, \bK^\times)$ at every point $x\in X$.
An alternative data describing a representation is
a cocycle on a permutation module \cite[Proposition 1]{AO10}.
To describe we need a notation
$(\bK^{\times})^{X}$ for the permutation $G$-module
of all the function $X\rightarrow \bK^\times$.
Such a function $f$
is given by a collection of its values
$(f(x))=(\alpha_x)_{x\in X}$, i.e., 
non-zero field elements $\alpha_x \in\bK^\times$.
The action is left: $\vg\cdot(\alpha_x)=(\alpha_{\vg\cdot x})$.
On the level functions it is given by  
$[\vg\cdot f](x)=f(\vg^{-1}\cdot x)$.
of denotes $(\mathbb{C}^{\times})^{\vert S \vert}$ as a $A$-module through the action of $A$ on $S$. 
\begin{prop}
\label{Osor_Clas}
(cf. \cite[Prop. 1]{AO10} and \cite[5.4]{Elg}) 
There is a one-to-one correspondence between equivalence classes 
of 2-representations of $G$ over $\bK$ 
and pairs ($X$, $[\theta]$) where $X$ is a
finite 
$G$-set and $[\theta] \in H^2(G, (\bK^{\times})^X)$.
\end{prop}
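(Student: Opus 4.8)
The plan is to combine the structure theorem for 2-representations (Theorem~\ref{irr_rep}) with the linear classification (Proposition~\ref{Osor_linClas}), and then repackage the resulting data using the explicit Shapiro isomorphism (Theorem~\ref{prop1}). First I would extract from a 2-representation $\Theta$ its underlying finite $G$-set. Since $H$ is trivial we have $\pi_1(\sK)=G$, and as recorded in Section~\ref{s1} a 2-representation hides a permutation action of $G$ on the set $\{1,\dots,n\}$ of simple objects; this set with its $G$-action is the $G$-set $X$. By Theorem~\ref{irr_rep}(1)--(2) the 2-representation $\Theta$ decomposes uniquely into irreducibles, one for each $G$-orbit of $X$, and by Theorem~\ref{irr_rep}(3)--(4) each irreducible is induced from a linear 2-representation $\Psi_i$ of the stabiliser $P_i=G_{x_i}$ of an orbit representative $x_i$, uniquely up to $G$-conjugation. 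Proposition~\ref{Osor_linClas} then identifies $\Psi_i$ with a class in $H^2(P_i,\bK^\times)$. Thus $\Theta$ determines, and is determined by, the $G$-set $X$ together with a family of classes $[\mu_i]\in H^2(P_i,\bK^\times)$, one per orbit, modulo conjugation; this is exactly the decorated $G$-set data of \cite{GRR11}.

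The main step is then to show that, for a fixed finite $G$-set $X$, this decorated data $\bigoplus_i H^2(P_i,\bK^\times)$ is naturally isomorphic to $H^2(G,(\bK^\times)^X)$. Writing $X=\bigsqcup_i G/P_i$ as a disjoint union of orbits, I would decompose the permutation module into coinduced modules,
$$(\bK^\times)^X \;\cong\; \bigoplus_i (\bK^\times)^{G/P_i} \;\cong\; \bigoplus_i \Coi_{P_i}^G(\bK^\times),$$
where on each transitive piece the identification of functions on $G/P_i$ with $P_i$-equivariant functions on $G$ is the standard one using a transversal, exactly as set up in Section~\ref{s5}. Since the sum is finite and group cohomology commutes with finite direct sums of coefficient modules, this yields
$$H^2(G,(\bK^\times)^X) \;\cong\; \bigoplus_i H^2(G,\Coi_{P_i}^G(\bK^\times)).$$
Applying the explicit Shapiro isomorphism of Theorem~\ref{prop1} to each summand gives $H^2(G,\Coi_{P_i}^G(\bK^\times))\cong H^2(P_i,\bK^\times)$, and composing produces the desired identification $H^2(G,(\bK^\times)^X)\cong\bigoplus_i H^2(P_i,\bK^\times)$.

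Finally I would verify that the two bijections are compatible: the per-orbit classes $[\mu_i]$ attached to $\Theta$ via Theorem~\ref{irr_rep} must correspond, under Shapiro, to a single global class $[\theta]\in H^2(G,(\bK^\times)^X)$. The main obstacle is precisely this last piece of bookkeeping, namely matching the conjugation ambiguity of Theorem~\ref{irr_rep}(4) with the $G$-module structure on $(\bK^\times)^X$. Concretely, one must confirm that replacing an orbit representative $x_i$ by $\vg\cdot x_i$ sends $P_i$ to $\vg P_i\vg^{-1}$ and $[\mu_i]$ to its conjugate, and that under the explicit map of Theorem~\ref{prop1} these conjugate choices land on the same element of $H^2(G,\Coi_{P_i}^G(\bK^\times))$, which sees only the $G$-set $X$ and not the chosen base points. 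Granting this compatibility, the composite is a well-defined bijection between equivalence classes of 2-representations of $G$ over $\bK$ and pairs $(X,[\theta])$, as claimed.
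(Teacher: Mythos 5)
Your proposal is correct and follows essentially the same route as the paper: decompose $\Theta$ via Theorem~\ref{irr_rep} into inductions of linear 2-representations, convert each $\Phi_i$ into a class in $H^2(P_i,\bK^\times)$ by Proposition~\ref{Osor_linClas}, identify $(\bK^\times)^{G/P_i}$ with $\Coi_{P_i}^G(\bK^\times)$, and transfer through the explicit Shapiro isomorphism of Theorem~\ref{prop1}. The only difference is that you spell out the conjugation/base-point bookkeeping that the paper compresses into the remark that ``all these steps are reversible,'' which is a reasonable elaboration rather than a new idea.
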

\begin{proof}
Theorem~\ref{irr_rep} associates
to a 2-representation $\Theta$
a unique (up to conjugacy and an isomorphism)
a collection $(P_i,\Phi_i)$ of pairs a subgroup $P_i$
and a degree one  2-representation $\Phi_i$ of $P_i$ so that 
$$
\Theta \cong \boxplus_i \Phi_i \uparrow_{P_i}^{G}.
$$
Proposition~\ref{Osor_linClas} gives cohomology classes
$\{\Phi_i\}\in H^2(P_i, \bK^{\times})$.
The permutation module $(\bK^{\times})^{G/P_i}$
is naturally isomorphic to the coinduced module
$\Coi_{P_i}^G(\bK^{\times})$, thus, we can use
Shapiro isomorphism 
(see Theorem~\ref{prop1})  
to get unique 
cohomology classes
$\psi(\{\Phi_i\}) \in H^2(G, (\bK^{\times})^{G/P_i})$.
We have associated the set 
and the cohomology class
$$
X\coloneqq \coprod_i G/P_i, \ \ 
[\theta] \coloneqq \bigoplus_i \psi(\{\Phi_i\}) \in
\bigoplus_i H^2 (G, (\bK^{\times})^{G/P_i})
\cong 
H^2 (G, (\bK^{\times})^X)
$$
to $\Theta$. All these steps are reversible
\end{proof}
Given a finite $G$-set $X$, $x\in X$ and a cochain
$\theta\in C^2 (G, (\bK^{\times})^X)$, 
we write
$\theta^x\in C^2 (G, \bK^{\times})$ 
for the component cochains.
We have
$\theta (\vg,\vh) (x) = \theta^x (\vg,\vh)$
on the level of functions $X\rightarrow \bK^{\times}$.
We are ready to give our proof of Osorno Formula:
\begin{theorem}
\cite[Theorem 1]{AO10} 
Let $\Theta$ be a 2-representation of $G$
that corresponds to 
a $G$-set $X$ and a cohomology class $[\theta]$
for some cochain
$\theta \in Z^2(G,(\bK^{\times})^X)$. Then
$$
\mX_\Theta (\vb,\va) 
= 
\sum_{x\in X, \; x = \va\cdot x = \vb\cdot x} 
\frac{\theta^x(\vb, \va^{-1})}{\theta^x (\va^{-1},\vb)}
=
\sum_{x\in X, \; x = \va\cdot x = \vb\cdot x} 
\frac{\theta^x(\vb,\va^{-1})\theta^x(\va,\vb\va^{-1})}{\theta^x(\va,\va^{-1}) \theta^x(1,1)}
$$
for any commuting $\va,\vb \in G$. 
\label{Osor_Char}
\end{theorem}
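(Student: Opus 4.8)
The plan is to reduce the statement to Corollary~\ref{cor1} by decomposing $\Theta$ into irreducibles, and then to translate the subgroup cocycles appearing there into the global component cochains $\theta^x$ by means of the explicit Shapiro isomorphism of Theorem~\ref{prop1}.

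First I would invoke Theorem~\ref{irr_rep} to write $\Theta \cong \boxplus_i \Phi_i\uparrow_{P_i}^{G}$, where each $\Phi_i$ is a linear 2-representation of a subgroup $P_i \leq G$, exactly as in the proof of Proposition~\ref{Osor_Clas}. Since $\mX(\vb,\va)$ is a ring homomorphism $\bB_\bZ(\sK)\to\bK$, it is in particular additive under $\boxplus$ (concretely, the 2-trace $\Tr_n$ of a direct sum is the direct sum of the 2-traces, and the operator $\bX$ is correspondingly block-diagonal, so its trace adds). Hence it suffices to establish the formula for a single induced summand, whose class is $\langle \Phi_i, P_i\rangle$ by Proposition~\ref{Groth}, and then sum over $i$, using that $X=\coprod_i G/P_i$ and $\theta=\bigoplus_i \psi(\{\Phi_i\})$ in the dictionary of Proposition~\ref{Osor_Clas}.

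For a single summand, Corollary~\ref{cor1} already records both desired shapes of the answer:
$$\mX(\vb,\va)(\langle \Phi_i,P_i\rangle) = \sum_{\vt\in T_i,\ \,^{\vt}\va,\,^{\vt}\vb \in P_i} \frac{\mu_i(\,^{\vt}\vb,(\,^{\vt}\va)^{-1})}{\mu_i((\,^{\vt}\va)^{-1},\,^{\vt}\vb)},$$
where $T_i$ is a right transversal to $P_i$ in $G$ and $\mu_i$ represents $\{\Phi_i\}$. Setting $x=\vt^{-1}P_i\in G/P_i$, the stabiliser condition $\va\cdot x=\vb\cdot x=x$ is precisely $\,^{\vt}\va,\,^{\vt}\vb\in P_i$, so $\vt\mapsto \vt^{-1}P_i$ is a bijection between the index set of the sum and the $(\va,\vb)$-fixed points of $G/P_i$. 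To match the cocycle values I would feed the explicit Shapiro map of Theorem~\ref{prop1} (with $Q=P_i$ and the transversal $T_i$, under $\Coi_{P_i}^G(\bK^\times)\cong(\bK^\times)^{G/P_i}$) the arguments $(\vb,\va^{-1})$ and $(\va^{-1},\vb)$ at $\vt$: when $\,^{\vt}\va,\,^{\vt}\vb\in P_i$, the defining equations $\vt\vb=\vh_1\vs_1$ and $\vs_1\va^{-1}=\vh_2\vs_2$ force $\vs_1=\vs_2=\vt$, $\vh_1=\,^{\vt}\vb$, $\vh_2=(\,^{\vt}\va)^{-1}$, so $\theta^x(\vb,\va^{-1})=\psi(\mu_i)(\vb,\va^{-1})(\vt)=\mu_i(\,^{\vt}\vb,(\,^{\vt}\va)^{-1})$, and symmetrically $\theta^x(\va^{-1},\vb)=\mu_i((\,^{\vt}\va)^{-1},\,^{\vt}\vb)$. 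The summand of Corollary~\ref{cor1} thus equals $\theta^x(\vb,\va^{-1})/\theta^x(\va^{-1},\vb)$, which yields the first equality after summing over $i$; the second equality follows either because Corollary~\ref{cor1} already carries it, or directly from the identity $\mu(\vg,\vh\vg^{-1})=\mu(\vg^{-1},\vh)^{-1}\mu(\vg,\vg^{-1})\mu(1,1)$ applied to the commuting pair $\va,\vb$ in the stabiliser $G_x$.

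The hard part will be this last step: fixing the Shapiro conventions so that the subgroup cocycle $\mu_i$ on $P_i$ and the global component cochain $\theta^x$ coincide on the nose at each fixed point, and checking that the transversal/coset bookkeeping respects the left-versus-right conventions used in Corollary~\ref{cor1} and in the definition of $X=\coprod_i G/P_i$. Once additivity under $\boxplus$ and the index bijection $\vt\leftrightarrow x=\vt^{-1}P_i$ are in place, the rest is a formal substitution.
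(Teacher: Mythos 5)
Your proposal is correct and follows essentially the same route as the paper's own proof: reduce by additivity of the 2-character under $\boxplus$ to a single induced linear 2-representation, apply Corollary~\ref{cor1}, identify the index set with the $(\va,\vb)$-fixed points of $G/P_i$ via $\vt\mapsto\vt^{-1}P_i$, and translate the subgroup cocycle into the component cochains using the explicit Shapiro maps of Theorem~\ref{prop1} (you compute $\psi(\mu_i)$ directly at the fixed cosets, while the paper equivalently pulls $\theta$ back through $\phi$; the two are interchangeable since $\phi\circ\psi=\mathds{1}$). The one point to tighten is your hope to make $\mu_i$ and $\theta^x$ ``coincide on the nose'': since $\theta$ is an arbitrary cocycle in its class this cannot be arranged, but it is also unnecessary --- as the paper observes, each summand $\theta^x(\vb,\va^{-1})/\theta^x(\va^{-1},\vb)$ depends only on the cohomology class of the restriction of $\theta^x$ to $\langle\va,\vb\rangle$ (which is an honest cocycle at a fixed point $x$), so one may replace $\theta$ by $\bigoplus_i\psi(\mu_i)$ without loss of generality.
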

\begin{proof}
The component $\theta^x$ is not a cocycle, in general.
Yet for the terms in the formula it works as a cocycle:
the restriction
$\theta^x\mid_{<\va,\vb>}$
is a cocycle on $<\va,\vb>$ since $x = \va\cdot x = \vb\cdot x$.
Thus, the second and the third expressions are equal.

Since $\mX_{\Theta\boxplus \Psi} (\vb,\va) =
\mX_\Theta (\vb,\va) + \mX_\Psi (\vb,\va)$
and the second expression is additive on $G$-orbits.
It suffices to prove the theorem under an assumption
that $\Theta$ is irreducible.
Without loss of generality 
$\Theta= \Psi \uparrow_P^G$ for a degree one  2-representation
of some subgroup $P$ and $X=G/P$.  
Let 
$\mu \in Z^2(P, \bK^{\times})$
a cocycle such that 
$\{\Psi\} = [\mu]$.
A right transversal $T$ (with $\vt_0=1$)
to $P$ in $G$
is in natural bijection with $X$ via $\vt\mapsto \vt^{-1}P$.
We use $T$ and $\mu$ to decorate $X$ with cocycles: 
$$
\mu_{\vt^{-1}P} \in Z^2(\vt^{-1}P\vt, \bK^{\times}), \ \ \
\mu_{\vt^{-1}P} (\vg,\vh) \coloneqq \mu (\,^{\vt}\vg,\,^{\vt}\vh).
$$

By Corollary~\ref{cor1}, 
$$
\mX_\Theta(\vb,\va)
= 
\sum_{\vt\in T, \; \,^{\vt}\va, \,^{\vt}\vb \in P} 
\frac{\mu(\,^{\vt}\vb, (\,^{\vt}\va)^{-1})}{ \mu ((\,^\vt\va)^{-1},
  \,^\vt\vb)}
= 
\sum_{\vt\in T, \; \va\vt^{-1}P=\vb\vt^{-1}P=\vt^{-1}P} \ 
\frac{\mu_{\vt^{-1}P}(\vb, \va^{-1})}{ \mu_{\vt^{-1}P} (\va^{-1},\vb)}
\ \ .
$$
The cohomology classes of the 
cocycles $\mu_{\vt^{-1}P}$ and $\theta$
are related via Shapiro isomorphisms with different subgroups:
$[\mu_{\vt^{-1}P}]= \phi_{\,\vt^{-1}P\vt} ([\theta])$.
Each term of the last sum
depends only on cohomology class $[\mu_{\vt^{-1}P}]$.
Hence, 
we may assume that
$\mu_{\vt^{-1}P}= \phi_{\,\vt^{-1}P\vt} (\theta)$
without loss of generality.
The condition
$\va, \vb \in \vt^{-1}P\vt$
ensures that
$$
\mu_{\vt^{-1}P} (\va,\vb)
=\phi_{\,\vt^{-1}P\vt} (\theta) (\va,\vb) 
=\theta (\va,\vb) (\vt^{-1}) 
= \theta^{\,\vt^{-1}P} (\va,\vb)
$$
facilitating the last in the proof:
$$
\mX_\Theta(\vb,\va)
= 
\sum_{\vt\in T, \; \va\vt^{-1}P=\vb\vt^{-1}P=\vt^{-1}P} \ 
\frac{\theta^{\vt^{-1}P}(\vb, \va^{-1})}{ \theta^{\vt^{-1}P} (\va^{-1},\vb)}
=
\sum_{x\in X, \; \va\cdot x = \vb\cdot x = x} 
\frac{\theta^{x}(\vb, \va^{-1})}{ \theta^{x} (\va^{-1},\vb)}
\ \ .
$$
\end{proof}

To facilitate further development 
we would like to formulate
several questions (conjectures).

\begin{con}
Let $\sK= (H\xrightarrow{\partial}G)$ 
be a crossed module with finite the fundamental group $\pi_1 (\sK)$.
Then there exists an Osorno formula for the value
$\mX_\Theta (\vb,\va, h)$ of the Ganter-Kapranov 2-character.
\end{con}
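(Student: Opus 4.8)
The plan is to carry out the four-step argument that proves Osorno's formula (Theorem~\ref{Osor_Char}) in the group case, upgrading each ingredient from $G$ to the crossed module $\sK$.

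First I would prove the crossed-module analogue of Theorem~\ref{th2}, computing $\mX$ on a \emph{linear} 2-representation of $\widetilde{\sK}$. Such a representation is $\Theta_A$ for a weak $\sK$-algebra structure $(\omega_1,\omega_2,\omega_3)$ on a full matrix algebra $A=\End_\bK(M)$ (Proposition~\ref{ext_2}), and $\bX_\Theta(\vb,\va,h)$ acts on the one-dimensional space $\hom_A(M,M^{[\omega_1(\vb)]})=\bK\,\rho(\vb)$, where $\rho$ is the projective representation attached to $\omega_1$ with cocycle $\omega_3$. Tracing through the globular composite as in Theorem~\ref{th2}, but now carrying the extra $H$-twist $\omega_2(h)$ (the relation $\partial(h)\va\vb\va^{-1}=\vb$ is exactly what makes the composite land back on the line $\bK\,\rho(\vb)$), I expect a formula of the shape
\[
\mX_\Theta(\vb,\va,h)=\frac{\omega_3(\vb,\va^{-1})}{\omega_3(\va^{-1},\vb)}\cdot\lambda(h),
\]
where $\lambda(h)$ is the scalar by which $\omega_2(h)$ acts, collapsing to Theorem~\ref{th2} when $H=1$. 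Isolating $\lambda(h)$ precisely is the genuinely new computation, since the $H$-twist has no counterpart in Section~\ref{s6}.

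Next I would reduce the general case to this one. Since $\mX$ is additive over direct sums $\boxplus$ (the 2-trace of Section~\ref{s4} splits over $\boxplus$), Theorem~\ref{irr_rep} lets me assume $\Theta=\Psi\uparrow_{\widetilde{\sK}^\prime}^{\widetilde{\sK}}$ is induced from a linear 2-representation $\Psi$ of a crossed submodule $\sK^\prime=(H\to G^\prime)$ with $|G:G^\prime|<\infty$. Applying Theorem~\ref{Our_Char} together with the degree-one formula above, I would then establish the crossed-module version of Corollary~\ref{cor1}: the mark $f_P^\alpha$ expands as a sum over a transversal $T$ subject to the lift condition of Theorem~\ref{Our_Char}, each term carrying the local weak-cocycle data of $\Psi$ conjugated by $\vt$, together with the $H$-twist factor indexed by $\,^{\vt^{-1}}h$.

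The decisive step is cohomological. I would package the data $(\omega_1,\omega_2,\omega_3)$ of a linear 2-representation of $\widetilde{\sK_P}$ as a single class $[\theta]\in H^2(\widetilde{\sK_P},\bK^\times)$ in the cohomology of the 2-group --- equivalently of the classifying space $B\widetilde{\sK_P}$, whose homotopy groups are $\pi_1(\sK)$ and $\pi_2(\sK)$ --- generalising Proposition~\ref{Osor_linClas}, and then transport it along an induction/coinduction Shapiro map, the crossed-module analogue of Theorem~\ref{prop1}, to a class on the permutation object $(\bK^\times)^X$ over the $\pi_1(\sK)$-set $X=\coprod_i\pi_1(\sK)/P_i$. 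Summing the transversal contributions against this global class should give
\[
\mX_\Theta(\vb,\va,h)=\sum_{x\in X,\ \bar{\va}\cdot x=\bar{\vb}\cdot x=x}\frac{\theta^x(\vb,\va^{-1})}{\theta^x(\va^{-1},\vb)}\,\gamma^x(h),
\]
where $\gamma^x(h)$ is the $h$-dependent factor produced by the $\omega_2$-component of $\theta^x$. I expect the main obstacle to be exactly this packaging: fixing the correct target cohomology of $\widetilde{\sK}$ and proving an explicit Shapiro homotopy equivalence there. The weak cocycle $(\omega_2,\omega_3)$ couples the $\pi_1$- and $\pi_2$-data through the crossed-module structure maps (cf. the identities in Proposition~\ref{ext_2}), so controlling this coupling under induction, and pinning down the precise shape of the new factor $\gamma^x(h)$, is where the real work lies.
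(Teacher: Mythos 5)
The statement you are addressing is not a theorem in the paper but its first conjecture in Section~\ref{s6}: the paper offers no proof, so the only question is whether your sketch actually closes the gap, and it does not. Your plan faithfully transplants the four-step proof of Theorem~\ref{Osor_Char} from the group case, and the first two steps are plausibly within reach: Theorem~\ref{irr_rep} and Theorem~\ref{Our_Char} are indeed proved for general crossed modules with finite $\pi_1(\sK)$, so the reduction to induced linear 2-representations and the expansion of the mark $f_P^\alpha$ over a transversal go through; and the degree-one computation with the extra $H$-twist is a finite calculation (via Lemma~\ref{nat_trans} and Proposition~\ref{ext_2} the twist acts on the line $\hom_A(M,M^{[\omega_1(\vb)]})$ through $\Upsilon(\omega_3(\partial h,\va\vb\va^{-1})\,\omega_2(h))$, which would pin down your scalar $\lambda(h)$) --- but you explicitly defer it rather than perform it, so even the ``easy'' new ingredient is left open.

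The decisive gap is your third step, and you name it yourself. Packaging the weak-homomorphism data $(\omega_1,\omega_2,\omega_3)$ of a linear 2-representation of $\widetilde{\sK_P}$ into a single class in a cohomology of the 2-group, and transporting it by a Shapiro-type equivalence to a class on the permutation module $(\bK^\times)^X$ over the $\pi_1(\sK)$-set $X$, is precisely the content of the paper's \emph{second} conjecture in Section~\ref{s6} (classification of 2-representations of $\widetilde{\sK}$ by pairs $(X,[\mu])$ with $[\mu]\in H^2(\sX,\underline{(\bK^\times)^X})$ for a classifying space $\sX$ of $\sK$), which is also open: Proposition~\ref{Osor_linClas}, Proposition~\ref{Osor_Clas} and the explicit Shapiro homotopy of Theorem~\ref{prop1} are only proved when $H$ is trivial, and the coupling of $\omega_2$ and $\omega_3$ through the crossed-module structure maps means the coefficients are no longer an ordinary $G$-module, so none of these results applies as stated. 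Your proposal therefore reduces the first conjecture to the second conjecture plus two unperformed computations ($\lambda(h)$ and $\gamma^x(h)$); that is a reasonable research programme --- and consistent with how the paper frames the problem --- but it is not a proof.
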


In what cohomological terms would we expect the formula to play out?
Group cohomology can be read off from the classifying space.
Let $\sX$ be a classifying space for $\sK$.
A 2-representation $\Theta$ of $\widetilde{\sK}$
comes with a canonical $\pi_1(\sK)$-set $X$.
The permutation representation $(\bK^\times)^X$ of $\pi_1(\sK)$
defines a local system $\underline{(\bK^\times)^X}$ on $\sX$.
We expect it to play a crucial role.

\begin{con}
If $\pi_1(\sK)$ is finite, then
2-representations of $\widetilde{\sK}$
are classified by pair $(X,[\mu])$
where $X$ is a finite $\pi_1(\sK)$-set 
and 
$[\mu]$ is a cohomology class in $H^2(\sX, \underline{(\bK^\times)^X})$
where
$\sX$ is a classifying space of $\sK$.
\end{con}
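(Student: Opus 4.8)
The plan is to reduce this crossed-module statement to the group statement (Proposition~\ref{Osor_Clas}) via the structure theory of Section~\ref{s2a}, together with a crossed-module refinement of the Shapiro isomorphism of Theorem~\ref{prop1}. First I would settle the degree-one case. By Proposition~\ref{ext_2} and Lemma~\ref{Extension}, a linear 2-representation of $\widetilde{\sK}$ is the same datum as a weak $\sK$-algebra structure on $\bK$, i.e. a pair $(\omega_2,\omega_3)$ with $\omega_1$ forced to be trivial since $\Aut(\bK)=1$. The defining identities for a weak crossed-module homomorphism are then exactly the cocycle equations for a normalised $2$-cocycle of $\sK$ with values in the trivial module $\bK^\times$, and $2$-isomorphism of such 2-representations corresponds to coboundary. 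Because the cohomology of the crossed module $\sK$ is the cohomology of its classifying space $\sX$ (a homotopy $2$-type with $\pi_1(\sX)=\pi_1(\sK)$ and $\pi_2(\sX)=\ker\partial$, so that $H^2(\sX,-)$ genuinely sees $\pi_2$ and the Postnikov invariant, unlike $H^2(\pi_1(\sK),-)$), this identifies $\tRep_1(\sK)\cong H^2(\sX,\underline{\bK^\times})$ with trivial local system, generalising Proposition~\ref{Osor_linClas}.

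Next I would run the decomposition. By Theorem~\ref{irr_rep} every 2-representation $\Theta$ of $\widetilde{\sK}$ splits uniquely, up to permutation and 2-isomorphism, as $\boxplus_i \Psi_i\uparrow_{\widetilde{\sK_{P_i}}}^{\widetilde{\sK}}$, where each $\Psi_i$ is a linear 2-representation of the restricted crossed module $\sK_{P_i}=(H\to\bar P_i)$ attached to a subgroup $P_i\leq\pi_1(\sK)$ determined up to conjugacy. Setting $X\coloneqq\coprod_i\pi_1(\sK)/P_i$ recovers the canonical finite $\pi_1(\sK)$-set. Applying the degree-one identification of the previous step to each summand yields classes $\{\Psi_i\}\in H^2(\sX_{P_i},\underline{\bK^\times})$, where $\sX_{P_i}$ is the cover of $\sX$ corresponding to $P_i$ (its classifying space has fundamental group $P_i$ and the same $\pi_2$).

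The crux is a Shapiro isomorphism at the level of $\sX$: one must produce a natural isomorphism $H^2(\sX_{P_i},\underline{\bK^\times})\cong H^2(\sX,\underline{\Coi^{\pi_1(\sK)}_{P_i}(\bK^\times)})=H^2(\sX,\underline{(\bK^\times)^{\pi_1(\sK)/P_i}})$ and then assemble $[\mu]\coloneqq\bigoplus_i\psi(\{\Psi_i\})\in\bigoplus_iH^2(\sX,\underline{(\bK^\times)^{\pi_1(\sK)/P_i}})\cong H^2(\sX,\underline{(\bK^\times)^X})$. In the group case ($H=1$, $\sX=BG$) this is precisely Theorem~\ref{prop1}, and I expect this to be the main obstacle: Theorem~\ref{prop1} supplies an explicit transversal-based cochain homotopy for ordinary group cohomology, whereas here one needs a cochain model for the cohomology of the crossed module (equivalently of the $2$-type $\sX$) in which the same coinduction/transversal construction is defined and is compatible with restriction to the cover $\sX_{P_i}$. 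Once the Shapiro map is available in this generality, reversibility of each step — exactly as in the proof of Proposition~\ref{Osor_Clas} — yields the claimed bijection between 2-representations and pairs $(X,[\mu])$, completing the argument.
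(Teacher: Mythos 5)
You should first be aware that the paper does not prove this statement: it is posed as an open conjecture, and the only evidence the paper supplies is the case of trivial $H$, namely Proposition~\ref{Osor_Clas}, proved from Theorem~\ref{irr_rep}, Proposition~\ref{Osor_linClas} and the explicit Shapiro map of Theorem~\ref{prop1}. So there is no proof of the paper's to compare yours against; your sketch is the evident generalisation of the paper's argument for Proposition~\ref{Osor_Clas}, and its skeleton (decompose by Theorem~\ref{irr_rep}, classify the degree-one pieces, reassemble via Shapiro) is surely the intended line of attack. As a proof, however, it has a genuine gap, and it also locates the difficulty in the wrong place.

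The gap is the degree-one step, which you assert rather than prove, and which is the actual heart of the conjecture. Proposition~\ref{Osor_linClas} is proved using finiteness of $G$ and realizability via \cite[Theorem 4.3]{GRR11}; here only $\pi_1(\sK)$ is finite while $G$ and $H$ may be infinite, so that route is closed. Your claim that a linear 2-representation ``is the same datum as'' a weak $\sK$-algebra structure on $\bK$ hides a strictification: Lemma~\ref{Extension} requires the restriction to $G$ to be given by honest twists $[\omega(\vg)]$, and Corollary~\ref{morita_dec_1} delivers a semimatrix algebra $\End_\bK S$, not $\bK$; one must first choose bases as in part 1 of Lemma~\ref{morita_dec_group} to pass to a 2-isomorphic 2-functor with $R^1(\vg)=\mbox{Id}$. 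More seriously, identifying pairs $(\omega_2,\omega_3)$ modulo 2-isomorphism with $H^2(\sX,\underline{\bK^\times})$ is itself a theorem to be proved: one needs a cochain model for the 2-type $\sX$ (say, from the nerve of $\widetilde{\sK}$) in which these pairs are precisely the 2-cocycles and 2-isomorphisms precisely the coboundaries. The Serre spectral sequence of $K(\pi_2(\sK),2)\rightarrow\sX\rightarrow B\pi_1(\sK)$ predicts an exact sequence $0\rightarrow H^2(\pi_1(\sK),\bK^\times)\rightarrow H^2(\sX,\bK^\times)\rightarrow \Hom(\pi_2(\sK),\bK^\times)^{\pi_1(\sK)}\rightarrow H^3(\pi_1(\sK),\bK^\times)$, and your data must be matched to it: $\omega_2$ restricted to $\ker\partial$ gives the invariant homomorphism, and the obstruction to extending such a homomorphism to all of $H$ with the twisted multiplicativity must be identified with the Postnikov pairing $d_3$ --- none of which is in your sketch. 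By contrast, the step you single out as ``the main obstacle'' is not one for the classification statement: the cover $\sX_P\rightarrow\sX$ attached to $P\leq\pi_1(\sK)$ is, up to homotopy, a classifying space of $\sK_P$ (same $\pi_2$, restricted $\pi_1$ and Postnikov invariant), and abstract Shapiro for finite covers --- a covering map $\pi$ has no higher direct images and $\pi_*\pi^*\underline{\bK^\times}\cong\underline{(\bK^\times)^{\pi_1(\sK)/P}}$ --- holds for arbitrary spaces and suffices; the explicit transversal-based homotopy of Theorem~\ref{prop1} is only needed for the companion character-formula conjecture. What your reassembly does need, and omits, is naturality of this Shapiro isomorphism under conjugation, so that the class $[\mu]=\bigoplus_i\psi(\{\Psi_i\})$ is independent of the chosen orbit representatives $P_i$, mirroring the well-definedness checks in Proposition~\ref{Osor_Clas}.
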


\begin{center}
\section{Examples of 2-representations}
\label{s7}
\end{center}

2-Representations are ubiquitous, yet mathematicians do not recognise
them when they see them. We would like to give two well-known
modern mathematical themes where they play crucial role, yet the
statements are not formulated in the language of 2-representations.

The first story is Lusztig's conjectures \cite{Lus}.
Let $\fg$ be a simple finite dimensional complex Lie algebra, 
$\chi\in\fg^\ast$ its nilpotent character. The finite group $G$
attached to $\chi$ is the component group of the stabiliser of $\chi$
(or the fundamental group of the coadjoint orbit of $\chi$:
they are naturally isomorphic). 

Now there are two 2-representations of $G$ over $\bC$ that appear in
nature. The first 2-representation $\Theta_{\Lie} (\chi)$ comes from the action
of $G$ on the semisimplification $U_\chi^0/\Rad (U_\chi^0)$
of the generic block $U_\chi^0$ of the reduced enveloping
algebra. It is a 2-representation over an algebraically closed field
$\bK$
of prime characteristic $p$ that needs to be larger  than the Coxeter
number of $\fg$. For such $p$ we have a canonical isomorphism
$H^2(G,\bK^\times)\cong H^2(G,\bC^\times)$ that leads to 
the  2-representation $\Theta_{\Lie} (\chi)$ over $\bC$. It appears that this
2-representation
may depend on characteristic $p$. Independence of $p$ can be
established
by the methods developed by  Bezrukavnikov and  Mirkovi\'c
\cite{BeMi},
although there is no relevant result explicitly in the paper. 

The second 2-representation $\Theta_{\Cox} (\chi)$ comes from geometry
of the double cell of the Langlands dual affine Weyl group
that corresponds to $\chi$ 
\cite{LusCell}.
Bezrukavnikov and Ostrik extract a decorated $G$-set from this geometry
\cite{Bezruk01}, thus, a 2-representation of $G$ in our terminology.  
The following conjecture by
Gunnells, Rose and Rumynin \cite{GRR11} is itself
a reformulation of a conjecture by Lusztig \cite{Lus}:

\begin{con} (2-Lusztig Conjecture)
The 2-representations $\Theta_{\Lie} (\chi)$ and $\Theta_{\Cox} (\chi)$ constructed above
are equivalent. 
Moreover, their cohomology class $[\mu]\in H^2 (G, (\bC^\times)^X)$ is trivial.
\end{con}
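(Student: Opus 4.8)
The plan is to exploit the classification of 2-representations by the pair $(X,[\theta])$ from Proposition~\ref{Osor_Clas}. Both $\Theta_{\Lie}(\chi)$ and $\Theta_{\Cox}(\chi)$ are 2-representations of the same finite group $G$ over $\bC$, so each is determined up to isomorphism by a finite $G$-set together with a class in $H^2(G,(\bC^\times)^X)$. I would therefore split the conjecture into two assertions: that the underlying $G$-sets $X_{\Lie}$ and $X_{\Cox}$ are isomorphic, and that both decorating cohomology classes vanish. Establishing these simultaneously yields both the isomorphism of 2-representations (the two sides share the same $G$-set and the same, trivial, class) and the refined ``moreover'' statement that the common class $[\mu]$ is trivial.

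For the $G$-set comparison I would identify both sides with the same canonical object attached to $\chi$. On the Lie-theoretic side, $X_{\Lie}$ is the $G$-set of simple factors of the semisimple algebra $U_\chi^0/\Rad(U_\chi^0)$, permuted by $G$; on the Coxeter side, $X_{\Cox}$ is the decorated $G$-set extracted by Bezrukavnikov and Ostrik from the double cell \cite{Bezruk01}. The identification $X_{\Lie}\cong X_{\Cox}$ is the set-theoretic content of Lusztig's conjecture \cite{Lus}, and I would establish it by transporting the representation-theoretic datum to characteristic zero through the equivalences of Bezrukavnikov and Mirkovi\'c \cite{BeMi}, where both $G$-sets become the set of irreducible components of the relevant Springer fibre with its natural $G$-action.

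For the cohomological triviality I would use the Ganter-Kapranov 2-character as a complete invariant. By Theorem~\ref{Our_Char} the character $\mX$ is a mark homomorphism, and by Proposition~\ref{BPsK} the marks separate the points of $\BA(\sK)$; together with the unique decomposition of Theorem~\ref{irr_rep}, this shows that the values $\mX_\Theta(\vb,\va)$ over commuting pairs determine $\Theta$ up to isomorphism. Since Theorem~\ref{Osor_Char} gives, for the trivial class, $\mX_\Theta(\vb,\va)=\#\{x\in X\mid \va\cdot x=\vb\cdot x=x\}$, the vanishing of $[\theta]$ is \emph{equivalent} to the character being this count of common fixed points for every commuting pair. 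I would then try to verify this integrality from each construction directly: on the Coxeter side by realising the Bezrukavnikov-Ostrik category \cite{Bezruk01} as a $\Rep(G)$-module category supported on an honest $G$-variety, so that the local system $\underline{(\bC^\times)^X}$ is constant and the decorating cocycles are coboundaries; on the Lie side by producing, after the base change of \cite{BeMi}, a $G$-equivariant splitting of the generic block $U_\chi^0/\Rad(U_\chi^0)$ carrying no projective twist.

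The hard part will be precisely this vanishing of $[\mu]$. The combinatorial datum --- the decorated $G$-set, equivalently the character --- pins down the isomorphism class of $\Theta$, but reading off whether the class is \emph{zero} demands the finer equivariant structure, which the character machinery of this paper does not itself produce. Concretely one must exhibit on each side a genuine, non-projective $G$-equivariant structure on the ambient category and show that the two structures agree; the geometric inputs of \cite{BeMi} and \cite{Bezruk01} make this plausible but do not supply it outright. This is the genuinely open step, where the present character-theoretic reduction hands over to equivariant geometry.
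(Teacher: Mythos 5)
The statement you set out to prove is a \emph{conjecture}: the paper contains no proof of it, only the closing remark that both the 2-Lusztig and 2-McKay conjectures would follow if one could compute the values of all marks on the relevant 2-representations, since by Proposition~\ref{BPsK} the full family of marks separates isomorphism classes in $\BA(\sK)$. Your proposal is likewise not a proof but a reduction, and you concede as much at the end; the difficulty is that two of its load-bearing intermediate steps are also unestablished. First, the identification $X_{\Lie}\cong X_{\Cox}$ of the underlying $G$-sets is not something you can simply transport through \cite{BeMi}: it is part of Lusztig's conjecture \cite{Lus} itself, which is open, and the paper explicitly cautions that even the independence-of-$p$ statement for $\Theta_{\Lie}(\chi)$ is not stated as a result in \cite{BeMi}. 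Citing the conjecture's own set-theoretic content as an input is circular. Second, your claim that ``the values $\mX_\Theta(\vb,\va)$ over commuting pairs determine $\Theta$ up to isomorphism'' does not follow from Theorem~\ref{Our_Char} together with Proposition~\ref{BPsK}. Theorem~\ref{Our_Char} only shows that each Ganter-Kapranov 2-character is \emph{one particular} mark $f_P^\alpha$, where $P$ is generated by two commuting elements of $\pi_1(\sK)$ and $\alpha$ is the specific character-type homomorphism; Proposition~\ref{BPsK} requires the \emph{whole} family of marks, including those attached to subgroups $P$ not generated by two commuting elements and to homomorphisms $\alpha$ not arising from 2-characters. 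So the 2-character is not known to be a complete invariant, and your asserted equivalence ``$[\theta]=0$ if and only if $\mX_\Theta(\vb,\va)$ counts common fixed points for every commuting pair'' is unproven in the direction you need.

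What your write-up does correctly is to use Proposition~\ref{Osor_Clas} to split the conjecture into the $G$-set comparison and the vanishing of the cohomology class, which is consistent with how the paper frames the problem ($\Theta_{\Cox}(\chi)$ is already presented as a decorated $G$-set via \cite{Bezruk01}, and the ``moreover'' clause is exactly the triviality of $[\mu]$). But neither half is established: the $G$-set identification rests on an open conjecture, the character-theoretic criterion for triviality rests on an unproven completeness claim, and the equivariant-geometric construction you sketch for exhibiting genuine non-projective $G$-structures on $U_\chi^0/\Rad(U_\chi^0)$ and on the Bezrukavnikov-Ostrik category is, as you say yourself, not supplied by \cite{BeMi} or \cite{Bezruk01}. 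This is a reasonable strategy outline aligned with the paper's stated program of computing marks, but it is not a proof, and the paper offers none to compare it against.
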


The second story is McKay's conjecture.
One issue with proving it is that even if you know it holds for
composition factors, it is not known how to deduce that the group
itself
satisfies it.
Isaacs, Malle and Navarro suggest a stronger condition
of McKay-goodness that is inherited by a group from its composition
factors \cite{IMN}. 
It appears that McKay goodness is a 2-representation-theoretic condition.

Let $H$ be a finite group, $p$ a fixed prime.
Let $Z$ be the centre of $H$,
$P$ a Sylow $p$-subgroup of $H$,
$N$ the normaliser of $P$.
The group $G$ of interest for us is the group of such automorphisms
$\varphi : G \rightarrow G$ 
that
$\varphi (P)=P$ and 
$\varphi|_Z = \mbox{Id}_Z$.
Let $\bC H_{p^\prime}$ be the direct summand of $\bC H$ consisting of
those
matrix algebras whose size is not divisible $p$. It is a 
$\bC Z$-algebra under the natural map 
$\bC Z\rightarrow \bC H \rightarrow \bC H_{p^\prime}$. It also has a
$\bC Z$-linear action of $G$. 
Likewise, there is a
$\bC Z$-linear action of $G$ on $\bC N_{p^\prime}$.
For each character $\chi: Z \rightarrow
\bC^\times$
we get two 2-representations 
$$
\Theta_H (\chi) \coloneqq 
\Theta_{\bC H_{p^\prime} \otimes_{\bC Z} \bC (\chi)}, \ \ \
\Theta_N (\chi) \coloneqq 
\Theta_{\bC N_{p^\prime} \otimes_{\bC Z} \bC  (\chi)}.
$$
\begin{con} (2-McKay Conjecture)
For each character $\chi$
the 2-representations 
$\Theta_H (\chi)$ and
$\Theta_N (\chi)$ 
of $G$
are equivalent.
\end{con}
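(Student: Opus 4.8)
The plan is to recognise the asserted 2-isomorphism as a reformulation of the inductive McKay condition of Isaacs--Malle--Navarro \cite{IMN} and to prove it modulo that condition. Here the acting object $G$ is an ordinary finite group, so its 2-representations are those studied in Section~\ref{s6}, and $\Theta_H(\chi)$, $\Theta_N(\chi)$ come from the split semisimple $G$-algebras $A_H := \bC H_{p^\prime} \otimes_{\bC Z} \bC(\chi)$ and $A_N := \bC N_{p^\prime} \otimes_{\bC Z} \bC(\chi)$. By Lemma~\ref{morita_dec_group} combined with Theorem~\ref{G_morita}, the two 2-representations are isomorphic precisely when $A_H$ and $A_N$ are $G$-Morita equivalent, so it suffices to produce such an equivalence.

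First I would unpack the decorated $G$-sets attached to the two algebras through Proposition~\ref{Osor_Clas}. The simple modules of $A_H$ are exactly the irreducible characters of $H$ of degree prime to $p$ whose restriction to $Z$ equals $\chi$; the group $G$ permutes this set $X_H := \Irr_{p^\prime}(H \mid \chi)$, and the residual projective $G$-action on the corresponding sum of matrix blocks defines a class $[\theta_H] \in H^2(G, (\bC^\times)^{X_H})$ recording the Clifford obstruction at each character. The same analysis for $N$ yields $X_N := \Irr_{p^\prime}(N \mid \chi)$ and $[\theta_N]$. Thus the conjecture becomes the existence of a $G$-equivariant bijection $X_H \to X_N$ under which $[\theta_H] \mapsto [\theta_N]$.

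Next I would feed in the character-theoretic input. A $G$-equivariant bijection $X_H \cong X_N$ is supplied by the $\Aut(H)$-equivariant McKay correspondence with central character $\chi$ fixed, restricted to $G \leq \Aut(H)$. To promote it to one respecting the decorations, note that by the Shapiro description of Theorem~\ref{prop1} each $G$-orbit of a character $x$ contributes a local class in $H^2(G_x, \bC^\times)$, where $G_x$ is the stabiliser of $x$; matching the global classes $[\theta_H]$ and $[\theta_N]$ therefore reduces to matching these local Schur-multiplier classes at corresponding characters. This local compatibility of projective/Clifford data is exactly what McKay-goodness encodes. Running a McKay-good bijection backwards through Proposition~\ref{Osor_Clas} and Theorem~\ref{irr_rep} then yields an isomorphism of decorated $G$-sets, hence a $G$-Morita equivalence, hence the desired 2-isomorphism.

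The main obstacle is precisely the cohomological matching in the previous step. The classical McKay conjecture asserts only an equality of cardinalities, whereas the 2-representation records the full $H^2(G, (\bC^\times)^X)$-decoration, and showing the two decorations agree requires the strong $\Aut$-equivariant, cohomology-preserving correspondence that constitutes McKay-goodness. Since McKay-goodness is itself established only for simple groups via the classification and inherited through the reduction theorem of \cite{IMN}, an unconditional proof of the 2-McKay Conjecture would carry the same weight; what this approach establishes cleanly is the \emph{equivalence} of the 2-representation-theoretic statement with McKay-goodness, making precise the remark that McKay-goodness is a 2-representation-theoretic condition.
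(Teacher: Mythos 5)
There is no proof of this statement in the paper to compare against: the 2-McKay Conjecture is stated as an open conjecture, and the authors' only remark on how one might attack it is that both of their conjectures would follow from the ability to compute marks, since by Proposition~\ref{BPsK} two 2-representations are isomorphic if and only if all their mark values agree. So the question is whether your argument settles an open problem, and — as you concede in your last paragraph — it does not: it is a conditional reduction, not a proof. Your translation step, however, is correct and entirely in the spirit of the paper: the crossed module here is $(1\rightarrow G)$ with $G$ finite, so by Theorem~\ref{G_morita} (together with Lemma~\ref{morita_dec_group}, all cocycles of a finite group being realizable) the conjecture is equivalent to a $G$-Morita equivalence of $A_H$ and $A_N$, and Proposition~\ref{Osor_Clas} converts this into the existence of a $G$-equivariant bijection $X_H=\Irr_{p^\prime}(H\mid\chi)\rightarrow X_N=\Irr_{p^\prime}(N\mid\chi)$ carrying $[\theta_H]$ to $[\theta_N]$ in $H^2(G,(\bC^\times)^X)$.

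The genuine gap is in the step where you invoke \cite{IMN}. McKay-goodness there is a condition on \emph{simple} groups (formulated for universal covering groups, with automorphism-equivariance and compatibility of projective representations), and the reduction theorem of \cite{IMN} outputs, for a group $H$ whose simple composition factors are all good, only the \emph{numerical} statement $|\Irr_{p^\prime}(H)|=|\Irr_{p^\prime}(N)|$. It does not produce, for the given $H$, a $G$-equivariant bijection matching the Clifford-theoretic $2$-cocycle decorations — which is exactly what your argument requires. To extract that, one would have to rerun the inductive reduction at the level of decorated $G$-sets (equivalently, inside the Burnside ring $\bB_\bZ(G)$, tracking the classes supplied by Theorem~\ref{prop1} at each stabiliser), and this inheritance argument is precisely the open content of the conjecture; the paper's sentence that McKay-goodness \emph{appears} to be a 2-representation-theoretic condition is an expectation, not a citable theorem. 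Consequently even your weakened final claim — that the approach "establishes cleanly" the equivalence of 2-McKay with McKay-goodness — is not justified by the sketch: the implication from goodness of composition factors to the decoration-preserving bijection for $H$ is missing its reduction argument, and the converse implication is not addressed at all. What survives is the (correct and useful) reformulation of the conjecture as an identity of pairs $(X,[\theta])$, which is consistent with, but does not go beyond, the paper's own framework.
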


Both conjectures would be solved if we could compute the values of
marks on 2-representations. Indeed, two 2-representations are
equivalent 
if and only if
the values of all marks are equal
as follows from Proposition~\ref{BPsK}.
Thus, the most crucial question is 
to find efficient innovative methods for computing marks.

\begin{center}
\section*{References}
\end{center}


\begin{thebibliography}{9999BBBBBBB}
\bibitem{BBFW}
J. Baez, A. Baratin, L. Freidel, D. Wise, 
\emph{Infinite-Dimensional Representations of 2-Groups},
Mem. Amer. Math. Soc.  {\bf 219}  (2012),  no. 1032.

\bibitem{BL04}
J. Baez, A. Lauda, 
\emph{Higher-Dimensional Algebra V: 2-Groups},
Theory Appl. Categ.  {\bf 12}  (2004), 423--491. 

\bibitem{BaMa}
J. Barrett, M. Mackaay, 
\emph{Categorical Representations of Categorical Groups}, 
Theory Appl. Categ.  {\bf 16}  (2006), 
529--557. 

\bibitem{Ben}
J. B\'{e}nabou,
\emph{Introduction to Bicategories}, 
Reports of the Midwest Category Seminar, 1967,
Springer, Berlin,  
pp. 1--77. 



\bibitem{BeMi} R. Bezrukavnikov, I. Mirkovi\'c,
\emph{Representations of semisimple Lie algebras
in prime characteristic and the
noncommutative Springer resolution},
Annals of Mathematics {\bf 178} (2013), 835--919

\bibitem{Bezruk01} R. Bezrukavnikov, V. Ostrik, 
On tensor categories attached to affine Weyl groups II, 101--119, 
in T. Shoji et al. editor, 
\textit{Representation theory of algebraic groups and quantum groups}, 
Adv. Stud. Pure Math., 40, Math. Soc. Japan, Tokyo, 2004.

\bibitem{Elg} J. Elgueta,
\emph{Representation theory of 2-groups on Kapranov and Voevodsky's 2-vector
spaces}, Adv. Math. {\bf 213} (2007), 53--92.

\bibitem{GK08}
N. Ganter,  M. Kapranov, 
\emph{Representation and Character Theory in 2-Categories},
Adv. Math. {\bf 217} (2008), 2268--2300.

\bibitem{GRR11}
P. Gunnells, A. Rose, D. Rumynin, 
\emph{Generalised Burnside Rings, $G$-Categories and Module Categories},
Journal of Algebra {\bf 358} (2012), 33--50.



\bibitem{IMN}
I. Isaacs, G. Malle, G. Navarro, 
\emph{A reduction theorem for the McKay conjecture},
Invent. Math., {\bf 170} (2007), 33--101. 

\bibitem{KV94}
M. Kapranov, V. Voevodsky, 
\emph{2-Categories and Zamolodchikov Tetrahedra Equations},  
Proceedings of Symposia in Pure Mathematics  v. 56(2), 1994, 177--259.

\bibitem{LusCell}
G. Lusztig,
\emph{Cells in affine Weyl groups and tensor categories}, 
Adv. in Math.,
{\bf 129} (1997), 85--98.

\bibitem{Lus}
G. Lusztig, 
\emph{Bases in equivariant K-theory}, 
Represent. Theory, {\bf  2}  (1998), 298--369;
\emph{Bases in equivariant K-theory II}, 
Represent. Theory, {\bf  3} (1999), 281--353.

\bibitem{NM58}
N. Monod, 
\emph{Continuous Bounded Cohomology of Locally Compact Groups}, 
Lectures Notes in Mathematics v. 1758, Springer, 2001, pp. 129–-168.

\bibitem{Noo}
  B. Noohi, 
\emph{Notes on 2-groupoids, 2-groups and crossed modules}, 
Homology Homotopy Appl. {\bf 9} (2007), 75--106.


\bibitem{AO10}
A. M. Osorno, 
\emph{Explicit Formulas for 2-characters}, 
Topology and its Applications {\bf 156} (2010), 369--377.

\bibitem{Ost}
V. Ostrik, 
\emph{Module Categories, Weak Hopf Algebras and Modular Invariants}, 
Transform. Groups  {\bf 8}  (2003),  177--206. 

\end{thebibliography}
\end{document}